
\documentclass[a4paper,reqno]{amsart}

\usepackage{amssymb}
\usepackage{latexsym}
\usepackage{amsmath}
\usepackage[mathcal]{euscript}
\usepackage{bbm}

      \def\sL{{\mathfrak L}}

   \def\dN{{\mathbb N}}   
      \def\dR{{\mathbb R}}

      \def\cC{{\mathcal C}}
\def\cD{{\mathcal D}}      
\def\cG{{\mathcal G}}   \def\cH{{\mathcal H}}   
   \def\cK{{\mathcal K}}   \def\cL{{\mathcal L}}
      \def\cO{{\mathcal O}}

\def\cal H{{\mathcal H}}

\def\R{\mathbb{R}}
\def\C{\mathbb{C}}

\def\N{\mathbb{N}}

\def\ran{{\text{\rm ran\,}}}
\def\dom{{\text{\rm dom\,}}}

\def\phi{\varphi}

\def\eps{\varepsilon}

\DeclareMathOperator{\clsp}{clsp}

\DeclareMathOperator{\Res}{Res}

\DeclareMathOperator{\clac}{cl_{ac}}
\DeclareMathOperator{\clc}{cl_{c}}

\DeclareMathOperator{\supp}{supp}
\DeclareMathOperator{\cl}{cl}
\DeclareMathOperator{\Real}{Re}
\DeclareMathOperator{\Imag}{Im}

\newtheorem{theorem}{Theorem}[section]
\newtheorem*{thm*}{Theorem}
\newtheorem{proposition}[theorem]{Proposition}
\newtheorem{corollary}[theorem]{Corollary}
\newtheorem{lemma}[theorem]{Lemma}
\newtheorem{assumption}[theorem]{Assumption}

\theoremstyle{definition}
\newtheorem{definition}[theorem]{Definition}
\newtheorem{example}[theorem]{Example}

\newtheorem{remark}[theorem]{Remark}

\numberwithin{equation}{section}

\title[]{Spectral analysis of selfadjoint elliptic differential operators, Dirichlet-to-Neumann maps, and abstract Weyl functions}

\author{Jussi Behrndt \and Jonathan Rohleder}

\address{Institut f\"ur Numerische Mathematik \\
Technische Universit\"at Graz \\
Steyrergasse 30\\
A-8010 Graz\\
Austria}
\email{behrndt@tugraz.at \and rohleder@tugraz.at}

\begin{document}

\begin{abstract}
The spectrum of a selfadjoint second order elliptic differential operator in $L^2(\dR^n)$ is described in terms of the limiting behavior of 
Dirichlet-to-Neumann maps, which arise in a multi-dimensional Glazman decomposition and correspond to an interior and an 
exterior boundary value problem. This leads to PDE analogs of renowned facts in spectral theory of ODEs. 
The main results in this paper are first derived in the more abstract context 
of extension theory of symmetric operators and corresponding Weyl functions, 
and are applied to the PDE setting afterwards. 

%
%
\end{abstract}

\maketitle

\section{Introduction}
The Titchmarsh--Weyl function is an indispensable tool in direct and inverse spectral theory of ordinary
differential operators and more general systems of ordinary differential equations; see the 
classical monographs~\cite{CL55,T62} and~\cite{B01,DK99,GS96,GS00-1,GP87,HS81,KST12,LW11,S96,S99} for a small 
selection of more recent contributions. For a singular second order
Sturm--Liouville differential operator of the form $\sL_+=- \frac{d^2}{d x^2} + q_+$ on $\R_+$ 
with a real-valued, bounded potential $q_+$ the Titchmarsh--Weyl function $m_+$ can be defined as
\begin{equation}\label{mfct}
m_+(\lambda)=\frac{f_\lambda' (0)}{f_\lambda (0)}, \qquad \lambda \in \C \setminus \R,
\end{equation}
where $f_\lambda$ is a square-integrable solution of $\sL_+ f = \lambda f$ on $\R_+$; cf.~\cite{T62,W10}.
The function  $m_+ : \C \setminus \R \to \C$ belongs to the class of Nevanlinna (or Riesz--Herglotz) functions
and it is a celebrated fact that it reflects the complete spectral properties 
of the selfadjoint realizations of $\sL_+$ in $L^2 (\R_+)$. E.g. 
the eigenvalues of the Dirichlet realization $A_D$ are precisely those $\lambda \in \R$, 
where $\lim_{\eta \searrow 0} i \eta m_+ (\lambda + i \eta) \neq 0$, the isolated eigenvalues among them coincide with the poles of $m_+$, and 
the absolutely continuous spectrum of $A_D$ (roughly speaking) consists of all $\lambda$ with the property 
$0 < \Imag m_+ (\lambda + i 0) < + \infty$. 

If $\sL=- \frac{d^2}{d x^2} + q$ is a 
singular Sturm-Liouville expression on $\R$ with $q$ real-valued and bounded, it is most natural 
to use decomposition methods of Glazman type for the analysis of the corresponding selfadjoint operator in $L^2(\dR)$; cf.~\cite{G65}.
More precisely, the restriction of $\sL$ to $\R_+$ gives rise to the Titchmarsh--Weyl function $m_+$ in 
\eqref{mfct}, and similarly a Titchmarsh--Weyl function $m_-$ associated to the restriction of $\sL$ to $\R_-$ is defined.
In that case usually the functions
\begin{equation}\label{mfct2}
m(\lambda)=-\bigl(m_+(\lambda)+m_-(\lambda)\bigr)^{-1}\quad\text{and}\quad 
\widetilde m(\lambda)= \begin{pmatrix} - m_+(\lambda) & 1 \\ 1 & m_-(\lambda)^{-1}  \end{pmatrix}^{-1}
\end{equation}
are employed for the description of the spectrum. Whereas the scalar function $m$ seems to be more convenient
it will in general not contain the complete spectral data, a drawback that is overcome when using the $2\times 2$-matrix
function $\widetilde m$. Some of these observations were already made in \cite{K63,T62}, similar ideas can also be found 
in \cite{HSW00,HS83,K89} for Hamiltonian systems, and more recently in an abstract operator theoretical framework in \cite{DHMS00,DHMS09},
see also \cite{BLu07,BLT13}.

One of the main objectives of this paper is to extend the classical spectral analysis of ordinary differential operators 
via the Titchmarsh--Weyl functions in~\eqref{mfct2} to the multidimensional setting.
For this consider the second order partial differential expression 
\begin{align}\label{eq:diffexprIntro}
 \cL  = - \sum_{j, k = 1}^n \frac{\partial}{\partial x_j} a_{jk} \frac{\partial}{\partial x_k} +
 \sum_{j = 1}^n \left( a_j \frac{\partial}{\partial x_j} - \frac{\partial}{\partial x_j} \overline{a_j} \right) + a
\end{align}
with smooth, bounded coefficients $a_{jk}, a_j : \R^n \to \C$ and $a: \R^n \to \R$ bounded, 
and assume that $\cL$ is formally symmetric and uniformly elliptic on $\R^n$. Let $A$ be the selfadjoint operator associated to \eqref{eq:diffexprIntro} in $L^2(\R^n)$.
Our main goal is to describe the spectral data of $A$, that is, isolated and embedded eigenvalues, 
continuous, absolutely continuous and singular continuous spectral points, in terms of the limiting behaviour of appropriate 
multidimensional counterparts of the functions in \eqref{mfct2}. Note first that
the multidimensional analogue of the Titchmarsh--Weyl function \eqref{mfct}
is the Dirichlet-to-Neumann map, and in order to define suitable analogues of the functions in \eqref{mfct2} we proceed 
as follows: Split $\R^n$ into a bounded domain $\Omega_{\rm i}$ with smooth boundary $\Sigma$ and let
$\Omega_{\rm e} = \R^n \setminus \overline{\Omega_{\rm i}}$ be the exterior of $\Omega_{\rm i}$.
For $\lambda \in \C \setminus \R$ the 
Dirichlet-to-Neumann maps for $\cL$ in $\Omega_{\rm i}$ and $\Omega_{\rm e}$, respectively, on the compact interface $\Sigma$ are given by
\begin{align*}
 \Lambda_{\rm i} (\lambda) u_{\lambda, \rm i} |_\Sigma := \frac{\partial u_{\lambda, \rm i}}{\partial \nu_{\cL_{\rm i}}} \Big|_\Sigma
 \quad\text{and}\quad
 \Lambda_{\rm e} (\lambda) u_{\lambda, \rm e} |_\Sigma := \frac{\partial u_{\lambda, \rm e}}{\partial \nu_{\cL_{\rm e}}} \Big|_\Sigma,
 \qquad \lambda \in \C \setminus \R,
\end{align*}
where $u_{\lambda, j} \in H^2 (\Omega_j)$ solve $\cL u_{\lambda, j} = \lambda u_{\lambda,j}$, $j = \rm i, e$, and 
$u_{\lambda, j}|_\Sigma$ and $\frac{\partial u_{\lambda,j}}{\partial \nu_{\cL_j}} |_\Sigma$ denote the trace 
and the conormal derivative, respectively; cf.~Section~\ref{41} for further details. Both functions $\Lambda_{\rm i}$ and $\Lambda_{\rm e}$ are viewed
as operator-valued functions in $L^2(\Sigma)$ defined on the dense subspace $H^{3/2}(\Sigma)$. The multidimensional counterparts of
the functions in \eqref{mfct2} are
\begin{align}\label{eq:WeylSchreodIntro}
 M (\lambda) = \big( \Lambda_{\rm i} (\lambda) + \Lambda_{\rm e} (\lambda) \big)^{-1}
\quad\text{and}\quad
 \widetilde M(\lambda)= \begin{pmatrix} \Lambda_{\rm i}(\lambda) & 1 \\ 1 & -\Lambda_{\rm e}(\lambda)^{-1}
 \end{pmatrix}^{-1} 
\end{align}
(the differences in the signs are due to the definition of the conormal derivative, where the normals of $\Omega_{\rm i}$ and $\Omega_{\rm e}$
point into opposite directions). Observe that, in contrast to the one-dimensional situation described above,
$\R^n$ is split into a bounded domain and an unbounded domain. This yields that $\Lambda_{\rm i}$ is meromorphic, which in turn essentially allows us to 
give an almost complete characterization
of the spectrum of $A$ with the function $M$ in \eqref{eq:WeylSchreodIntro} in Theorem~\ref{thm:eigenSchroed}; the only possible
spectral points that cannot be detected with $M$ are eigenvalues of $A$ with vanishing traces on $\Sigma$, and possible
accumulation points of such eigenvalues. A complete picture of the spectrum of $A$ in terms of the limiting behaviour of Dirichlet-to-Neumann maps
is obtained with help of the $2\times 2$-block operator matrix function $\widetilde M$ in  
\eqref{eq:WeylSchreodIntro} in Theorem~\ref{thm:eigenSchroedDecoup}.

We mention that in connection with Schr\"{o}dinger operators in $\R^3$ the function $M$ in \eqref{eq:WeylSchreodIntro} was already used 
in \cite{AP04} for the extension of a classical 
convergence property of the Titchmarsh--Weyl function to the three-dimensional case, see also \cite{BLL13,BLL13-2,R09}. We also remark that for Schr\"odinger operators on exterior domains with $C^2$-boundaries
the connection of the spectrum to the limits of the Dirichlet-to-Neumann map was already investigated by the authors in~\cite{BR13}.

In this paper our approach to Titchmarsh--Weyl functions and their connection to spectral properties of corresponding selfadjoint 
differential operators is more abstract and of general nature, based on the concepts of (quasi) boundary triplets and their Weyl functions. 
Recall first that for a symmetric operator $S$ in a Hilbert space $\cH$ a 
boundary triple $\{ \cG, \Gamma_0, \Gamma_1 \}$ consists of a ``boundary space'' $\cG$ and two
linear mappings $\Gamma_0, \Gamma_1 : \dom S^* \to \cG$, which satisfy an abstract Green identity 
\begin{equation}\label{gi}
 (S^* f, g)_\cH - (f, S^* g)_\cH = (\Gamma_1 f, \Gamma_0 g)_\cG - (\Gamma_0 f, \Gamma_1 g)_\cG, \quad f, g \in \dom S^*,
\end{equation}
and a maximality condition. The corresponding Weyl function $M$ is defined as 
\begin{equation}\label{wf}
 M (\lambda) \Gamma_0 f_\lambda = \Gamma_1 f_\lambda, \qquad \lambda \in \C \setminus \R,
\end{equation}
where $f_\lambda \in \cH$ solves the equation $S^* f = \lambda f$; the values $M(\lambda)$ of the Weyl function $M$ are 
bounded operators
in the Hilbert space $\cG$.
The example of the Sturm--Liouville expression $\sL_+$ in the beginning of the introduction fits into this scheme: 
There $\cH = L^2 (\R_+)$,~$S$ is the minimal operator 
associated with the differential expression $\mathfrak{L}_+$ in $L^2 (\R_+)$, $\cG = \C$,
and the mappings $\Gamma_0, \Gamma_1$ are given by
\begin{align*}
 \Gamma_0 f = f (0) \quad \text{and} \quad \Gamma_1 f = f' (0), \qquad f \in \dom S^*,
\end{align*}
where $S^*$ is the maximal operator associated with $\sL_+$ in $L^2 (\R_+)$. Then
the corresponding Weyl function is $m_+$ in \eqref{mfct}, 
the selfadjoint Dirichlet operator $A_D$ coincides with $S^* \upharpoonright \ker \Gamma_0$, and the
spectrum can be described with the help of the limits of the Weyl function. 
The correspondence between the spectrum of the particular selfadjoint extension $A_0 := S^* \upharpoonright \ker \Gamma_0$ and the 
limits of the Weyl function is not a special feature of the boundary triple for 
the above Sturm--Liouville equation. In fact, it holds as soon as the symmetric 
restriction $S$ (and, thus, the boundary mappings $\Gamma_0$ and $\Gamma_1$) is chosen properly. 
More abstract considerations from \cite{DM91,KL73,KL77,LT77} yield that the operator $A_0$ (and hence its spectrum) 
is determined up to unitary equivalence by 
the Weyl function if and only if 
the symmetric operator $S$ is simple or completely non-selfadjoint, that is, there 
exists no nontrivial subspace of $\cH$ which reduces $S$ to a selfadjoint operator. This condition can be reformulated equivalently as
\begin{align}\label{eq:simpleIntro}
 \cH = \clsp \bigl\{ \gamma (\nu) g : \nu \in \C \setminus \R, \, g \in \cG \bigr\},
\end{align}
where $\gamma (\nu)=(\Gamma_0\upharpoonright\ker(S^*-\nu))^{-1}$ is the so-called $\gamma$-field and $\clsp$ denotes the closed linear span; cf.~\cite{K49}. 
Under the assumption that $S$ is simple a description of the absolutely continuous and singular continuous spectrum 
in the framework of boundary triples and their Weyl functions was given in \cite{BMN02}; for more recent related work see also~\cite{BGW09,BHMNW09,BMNW08,BGP08,HMM13,M10,MN11,P13,R07,SW13}.

The concept of boundary triples and their Weyl functions was extended in~\cite{BL07} in such a way that it is conveniently applicable
to PDE problems. For that one defines boundary mappings $\Gamma_0,\Gamma_1$ on a suitable, smaller subset of the domain of the maximal 
operator and 
requires Green's identity \eqref{gi} only to hold on this subset; the definition of the Weyl function associated to such a quasi boundary triple $\{ \cG, \Gamma_0, \Gamma_1 \}$ is as in \eqref{wf}, except that only solutions in the domain of the boundary maps are used; cf. Section~\ref{21}. For the second order elliptic operator $\cL$ in~\eqref{eq:diffexprIntro} restricted to the smooth domain 
$\Omega_{\rm i}\subset\R^n$ one may choose $\cG=L^2(\Sigma)$,
\begin{align*}
 \Gamma_0 u = u\vert_{\partial\Omega_{\rm i}} \quad \text{and} \quad \Gamma_1 u = 
 -\frac{\partial u}{\partial\nu_{\cL_{\rm i}}}\Big|_{\partial\Omega_{\rm i}}, 
 \qquad u \in H^2(\Omega),
\end{align*}
in which case the corresponding Weyl function is (minus) the Dirichlet-to-Neumann map $-\Lambda_{\rm i}$.
Based on
orthogonal couplings of symmetric operators and extending abstract ideas in \cite{DHMS00} 
also the functions $M$ and $\widetilde M$ in \eqref{eq:WeylSchreodIntro} can be interpreted as Weyl functions associated
to properly chosen quasi boundary triples; e.g., $M$ corresponds to the pair of boundary mappings 
\begin{equation}\label{couple}
 \Gamma_0 u = \frac{\partial u_{\rm i}}{\partial \nu_{\cL_{\rm i}}}
 \Big|_\Sigma + \frac{\partial u_{\rm e}}{\partial \nu_{\cL_{\rm e}}} \Big|_\Sigma, \quad \Gamma_1 u = u |_\Sigma, \qquad u = u_{\rm i} \oplus u_{\rm e}, \quad u_{\rm i} |_\Sigma = u_{\rm e} |_\Sigma,
\end{equation}
where $u_j \in H^2 (\Omega_j)$, $j = \rm i, e$. Moreover, 
$\ker \Gamma_0$ is the domain of the unique selfadjoint operator $A$ associated 
with $\cL$ in $L^2 (\R^n)$. When trying to link the spectral properties of $A$
to the limiting behavior of the function $M$ it is necessary to extend the known results 
for boundary triples to the more general notion of quasi boundary triples. Moreover, a subtle 
difficulty arises: The symmetric operator $S$ corresponding to the boundary mappings in \eqref{couple} may possess eigenvalues and thus in general is not
simple. 

In the abstract part of
the present paper we show how this difficulty can be overcome. In the general setting of quasi boundary triples and their 
Weyl functions we show that a local simplicity condition on an open interval (or, more generally,  a Borel set) $\Delta \subset \R$ 
suffices to characterize the spectrum of $A_0$ in $\Delta$. To be more specific, we assume that
\begin{align}\label{eq:localSimpleIntro}
 E (\Delta) \cH = \clsp \big\{ E (\Delta) \gamma (\nu) g : \nu \in \C \setminus \R, \, g \in \ran \Gamma_0 \big\},
\end{align}
where $E (\Delta)$ denotes the spectral projection of $A_0 = S^* \upharpoonright \ker \Gamma_0$ on $\Delta$; this is a local version of the condition~\eqref{eq:simpleIntro}. 
Under this assumption we provide characterizations of the isolated and embedded eigenvalues and the corresponding eigenspaces, as well as the continuous, 
absolutely continuous and singular continuous spectrum of $A_0$ in~$\Delta$ in terms of the limits of $M (\lambda)$ when $\lambda$ approaches the real axis. 
For instance, we prove that the eigenvalues of $A_0$ in $\Delta$ are those $\lambda$, where $\lim_{\eta \searrow 0} i \eta M (\lambda + i \eta) g \neq 0$ 
for some $g \in \ran \Gamma_0$, and that the absolutely continuous spectrum of $A_0$ can be characterized by means of the points 
$\lambda$ where $0 < \Imag (M (\lambda + i 0) g, g)_\cG < \infty$. Moreover, we prove inclusions and provide conditions for the absence of 
singular continuous spectrum. 
Afterwards we apply the obtained results to the selfadjoint elliptic differential operator associated to $\cL$ in \eqref{eq:diffexprIntro} in $L^2 (\R^n)$. 
We prove that, despite the fact that the underlying symmetric 
operator fails to be simple in general, the whole absolutely continuous spectrum of $A_0$ can be recovered from the mapping $M$ in~\eqref{eq:WeylSchreodIntro}. 
Moreover, we prove that the eigenvalues of $A_0$ and the corresponding eigenfunctions can be characterized by limiting properties of $M$ as 
far as the eigenfunctions do not vanish on the interface $\Sigma$. A complete picture of the spectrum of $A_0$ is obtained when
using the function $\widetilde M$ in~\eqref{eq:WeylSchreodIntro}.

This paper is organized in the following way. In Section~2 we recall the basic facts on quasi boundary triples and corresponding Weyl functions and discuss the local simplicity property~\eqref{eq:localSimpleIntro} in detail. In Section~3 the connection between the spectra of selfadjoint operators and corresponding abstract Weyl functions is investigated. Section~4 contains the application of the abstract results to the mentioned PDE problems.

Finally, let us fix some notation. For a selfadjoint operator $A$ in a Hilbert space $\cH$ we denote by $\sigma (A)$ 
($\sigma_{\rm p} (A), \sigma_{\rm c} (A)$, $\sigma_{\rm ac} (A)$, $\sigma_{\rm sc} (A)$, $\sigma_{\rm s} (A)$, 
respectively) the spectrum (set of eigenvalues, continuous, absolutely continuous, singular continuous, singular 
spectrum, respectively) of $A$ and by $\rho (A) = \C \setminus \sigma (A)$ its resolvent set.

\section{Quasi boundary triples, associated Weyl functions, and a local simplicity condition}

In this preliminary section we first recall the concepts of quasi boundary triples, their $\gamma$-fields and their Weyl functions. Afterwards we discuss a local simplicity property of symmetric operators, which will be assumed to hold in most of the results of Section~\ref{sec:abstr}.

\subsection{Quasi boundary triples}\label{21}

The notion of quasi boundary triples was introduced in~\cite{BL07}
as a generalization of the notions of boundary triples and generalized boundary triples, see~\cite{DHMS06,DM91,DM95,GG91,K75}. 
The basic definition is the following.

\begin{definition}\label{def:qbt}
Let $S$ be a closed, densely defined, symmetric operator in a separable Hilbert 
space $\cH$ and let $T \subset S^*$ be an operator whose closure coincides with $S^*$, i.e., $\overline T = S^*$. A triple $\{ \cG, \Gamma_0, \Gamma_1\}$ 
consisting of a Hilbert space $\cG$ and 
two linear mappings $\Gamma_0, \Gamma_1 : \dom T \to \cG$ is called a 
{\em quasi boundary triple} for $S^*$ if the following conditions are 
satisfied.
\begin{enumerate}
  \item The range of the mapping $\Gamma:=(\Gamma_0, \Gamma_1)^\top:\dom T\rightarrow \cG \times \cG$ is dense.
  \item The identity
   \begin{align}\label{eq:absGreen}
    (T u, v)_\cH - (u, T v)_\cH = (\Gamma_1 u, \Gamma_0 v)_\cG - 
(\Gamma_0 u, \Gamma_1 v)_\cG
   \end{align}
   holds for all $u, v \in \dom T$.
  \item The operator $A_0 := T \upharpoonright \ker \Gamma_0$ is 
selfadjoint in $\cH$.
\end{enumerate}
\end{definition}

In the following we suppress the indices in the scalar products and simply write $(\cdot, \cdot)$, when no confusion can arise.

We recall some facts on quasi boundary triples, which can be found in \cite{BL07,BL11}.
Let $S$ be a closed, densely defined, symmetric operator in 
$\cH$. A quasi boundary triple $\{ \cG, \Gamma_0, \Gamma_1\}$ for $S^*$ exists if and only if the defect numbers of $S$ are equal. What we will use frequently is that if $\{ \cG, \Gamma_0, \Gamma_1\}$ is a quasi boundary triple for $S^*$ then $\dom S=\ker\Gamma_0\cap\ker\Gamma_1$.
Recall also that a quasi boundary triple with the additional property $\ran(\Gamma_0,\Gamma_1)^\top=\cG\times\cG$ becomes an (ordinary) boundary triple and that, in particular, in this case the boundary mappings $\Gamma_0,\Gamma_1$ are defined on $\dom S^*$ and \eqref{eq:absGreen} holds
with $T$ replaced by $S^*$. In particular, in the case of finite defect numbers the notions of quasi boundary triples and (ordinary) boundary triples coincide. 
For more details
on quasi boundary triples we refer to \cite{BL07,BL11}.

In order to prove that a triple $\{ \cG, \Gamma_0, \Gamma_1\}$ is a 
quasi boundary triple for the adjoint $S^*$ of a given symmetric 
operator $S$ it is not necessary to know $S^*$ explicitly, as the 
following useful proposition shows; cf.~\cite[Theorem~2.3]{BL07} for a proof.

\begin{proposition}\label{prop:ratetheorem}
Let $T$ be a linear operator in a separable Hilbert space $\cH$, let $\cG$ be a 
further Hilbert space, and let $\Gamma_0, \Gamma_1 : \dom T \to \cG$ be 
linear mappings which satisfy the following conditions.
\begin{enumerate}
  \item The range of the map $\Gamma = ( \Gamma_0, \Gamma_1 )^\top : \dom T \to 
\cG \times \cG$ is dense in $\cG\times\cG$ and $\ker \Gamma$ is dense in $\cH$.
  \item The identity~\eqref{eq:absGreen} holds for all $u,v\in\dom T$.
  \item There exists a selfadjoint restriction $A_0$ of $T$ in $\cH$ with 
$\dom A_0 \subset \ker \Gamma_0$.
\end{enumerate}
Then $S := T \upharpoonright \ker \Gamma$ is a closed, densely defined, 
symmetric operator in $\cH$, $\overline T = S^*$ holds, and $\{ \cG, 
\Gamma_0, \Gamma_1 \}$ is a quasi boundary triple for $S^*$ with $T 
\upharpoonright \ker \Gamma_0 = A_0$. 
\end{proposition}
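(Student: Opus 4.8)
The plan is to verify the three elementary structural facts first, and then to reduce the two substantial assertions---that $S$ is closed and that $\overline T=S^*$---to a single computation of the adjoint $T^*$. First I would record the routine consequences of the hypotheses. Since $\dom S=\ker\Gamma$ is dense by~(i), the operator $S=T\upharpoonright\ker\Gamma$ is densely defined, and inserting two elements of $\ker\Gamma$ into Green's identity~\eqref{eq:absGreen} shows that $S$ is symmetric. Inserting an arbitrary $u\in\dom T$ together with $v\in\ker\Gamma$ into~\eqref{eq:absGreen} gives $(Tu,v)=(u,Tv)$, so $Tu=S^*u$; hence $T\subset S^*$, the operator $T$ is closable, and $\overline T\subset S^*$. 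Likewise, feeding two elements of $\ker\Gamma_0$ into~\eqref{eq:absGreen} shows that $T\upharpoonright\ker\Gamma_0$ is symmetric; as it extends the selfadjoint (hence maximal symmetric) operator $A_0$ from~(iii), it must coincide with $A_0$, which already establishes $T\upharpoonright\ker\Gamma_0=A_0$.

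The decisive idea is that, rather than proving $\overline T=S^*$ directly, it suffices to show $T^*=S$: taking adjoints then yields $\overline T=(T^*)^*=S^*$, while the chain $S\subset\overline S\subset T^*=S$ forces $S=\overline S$, so that $S$ is closed. One inclusion is free, for from $T\subset S^*$ one obtains $\overline S=(S^*)^*\subset T^*$. For the reverse inclusion $T^*\subset S$, let $g\in\dom T^*$ with $T^*g=h$. Restricting the defining relation $(Tu,g)=(u,h)$ to $u\in\dom A_0\subset\dom T$ and using $A_0\subset T$ gives $(A_0u,g)=(u,h)$ for all $u\in\dom A_0$; since $A_0$ is selfadjoint this forces $g\in\dom A_0$ and $A_0g=h$. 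In particular $\dom A_0\subset\ker\Gamma_0$ by~(iii), so $\Gamma_0g=0$, and $Tg=A_0g=h=T^*g$.

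With this information Green's identity~\eqref{eq:absGreen}, evaluated for an arbitrary $u\in\dom T$ against $g$, collapses to
\begin{equation*}
0=(Tu,g)-(u,Tg)=(\Gamma_1u,\Gamma_0g)-(\Gamma_0u,\Gamma_1g)=-(\Gamma_0u,\Gamma_1g),
\end{equation*}
so $\Gamma_1g$ is orthogonal to $\ran\Gamma_0$. As $\ran\Gamma$ is dense in $\cG\times\cG$ by~(i), its first component $\ran\Gamma_0$ is dense in $\cG$, whence $\Gamma_1g=0$. Therefore $g\in\ker\Gamma_0\cap\ker\Gamma_1=\ker\Gamma=\dom S$ with $Sg=Tg=h=T^*g$, which proves $T^*\subset S$. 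Combining the two inclusions gives $T^*=S=\overline S$, so $S$ is closed, and $\overline T=(T^*)^*=S^*$ follows at once. It then remains only to observe that $\{\cG,\Gamma_0,\Gamma_1\}$ fulfils the three requirements of Definition~\ref{def:qbt} for this $S^*$: density of $\ran\Gamma$ by~(i), the abstract Green identity by~(ii), and selfadjointness of $A_0=T\upharpoonright\ker\Gamma_0$ from the first paragraph.

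I expect the main obstacle to be precisely the passage carried out in the second and third paragraphs. The conceptual hurdle is to see that both global conclusions are encoded in the single inclusion $T^*\subset S$; the technical hurdle is the two-stage extraction from an arbitrary $g\in\dom T^*$, where selfadjointness of $A_0$ is used first to force $\Gamma_0g=0$, and density of $\ran\Gamma_0$ is used afterwards to force $\Gamma_1g=0$. By comparison, the symmetry of $S$, the inclusion $T\subset S^*$, and the identity $T\upharpoonright\ker\Gamma_0=A_0$ are immediate.
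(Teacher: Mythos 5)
Your proof is correct: the reduction of both nontrivial claims to the single identity $T^*=S$, established by first using selfadjointness of $A_0$ to force $\Gamma_0 g=0$ and then density of $\ran\Gamma_0$ to force $\Gamma_1 g=0$, is sound, and the remaining verifications are routine. The paper itself gives no proof but refers to \cite[Theorem~2.3]{BL07}, whose argument proceeds along essentially the same lines, so there is nothing to add.
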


\subsection{$\gamma$-fields and Weyl functions}

Let $S$ be a closed, densely defined, symmetric operator in 
$\cH$ and let $\{\cG,\Gamma_0,\Gamma_1\}$ be a quasi boundary triple for $\overline T=S^*$ with $A_0=T\upharpoonright\ker\Gamma_0$.
In order to define the $\gamma$-field and the Weyl function corresponding to $\{\cG,\Gamma_0,\Gamma_1\}$
note that the direct sum decomposition
\begin{align*}
  \dom T = \dom A_0 \dotplus \ker (T - \lambda) = \ker \Gamma_0 \dotplus 
\ker (T - \lambda)
\end{align*}
holds for each $\lambda \in \rho (A_0)$ and that, in particular, the 
restriction of $\Gamma_0$ to $\ker (T - \lambda)$ is injective. The following definition
is formally the same as for ordinary and generalized boundary triples.

\begin{definition}\label{def:gammaweyl}
Let $\{ \cG, \Gamma_0, \Gamma_1\}$ be a quasi boundary 
triple for $\overline T=S^*$ and let $A_0=T\upharpoonright\ker\Gamma_0$.
Then the {\em $\gamma$-field} $\gamma$ and the {\em 
Weyl function} $M$ associated with $\{\cG, \Gamma_0, \Gamma_1\}$ are given by
\begin{align*}
  \gamma (\lambda) = \big( \Gamma_0 \upharpoonright \ker (T - \lambda) 
\big)^{-1} \quad \text{and} \quad M (\lambda) = \Gamma_1 \gamma (\lambda),\quad\lambda\in\rho(A_0),
\end{align*}
respectively.
\end{definition}

It follows immediately from the definition that for each $\lambda \in 
\rho (A_0)$ the operator $M (\lambda)$ satisfies the equality
\begin{align*}
  M (\lambda) \Gamma_0 u_\lambda = \Gamma_1 u_\lambda, \qquad u_\lambda 
\in \ker (T - \lambda),
\end{align*}
and that $\ran \gamma (\lambda) = \ker (T - \lambda)$ holds.
We summarize some properties of the $\gamma$-field and the Weyl 
function. For the proofs of items (i)-(iv) in the next lemma we refer to~\cite[Proposition~2.6]{BL07}, item (v) is a simple consequence of (ii) and (iii).

\begin{lemma}\label{lem:gammaWeylProp}
Let $\{ \cG, \Gamma_0, \Gamma_1 \}$ be a quasi boundary triple for $\overline T = S^*$ with $\gamma$-field $\gamma$ and Weyl function $M$ and let $A_0=T\upharpoonright\ker\Gamma_0$. Then for $\lambda, \mu,\nu \in \rho (A_0)$ the following 
assertions hold.
\begin{enumerate}
  \item $\gamma (\lambda)$ is a bounded operator from 
$\cG$ to $\cH$ defined on the dense subspace $\ran\Gamma_0$. The adjoint $\gamma (\lambda)^* : \cH \to \cG$ is defined on $\cH$ and is bounded. It is
given by
  \begin{align*}
   \gamma (\lambda)^* = \Gamma_1 (A_0 - \overline \lambda)^{-1}.
  \end{align*}
  \item The identity
  \begin{align*}
   \gamma (\lambda)g = \left( I + (\lambda - \mu) (A_0 - \lambda)^{-1} 
\right) \gamma (\mu)g
  \end{align*}
  holds for all $g\in\ran\Gamma_0$.
  \item The $\gamma$-field and the Weyl function are connected via
  \begin{align*}
   (\lambda - \overline \mu) \gamma (\mu)^* \gamma (\lambda)g = M 
(\lambda)g - M (\mu)^*g,\qquad g\in \ran\Gamma_0,
  \end{align*}
  and $M (\overline \lambda) \subset M (\lambda)^*$ holds.
  \item $M (\lambda)$ is an operator in $\cG$ defined on the dense subspace $\ran\Gamma_0$ and satisfies
  \begin{equation}\label{eq:Mformula}
   \begin{split}
    \qquad M (\lambda)g &  = \Real M (\mu)g \\  
     & \qquad + \gamma (\mu)^* \left( (\lambda - \Real 
\mu) + (\lambda - \mu) (\lambda - \overline \mu) (A_0 - \lambda)^{-1} \right) \gamma (\mu)g
   \end{split}
  \end{equation}
  for all $g\in\ran\Gamma_0$.  
  In particular, for every $g \in \ran \Gamma_0$ the function $\lambda \mapsto M (\lambda) g$ is holomorphic on $\rho 
(A_0)$ and each isolated singularity of  $\lambda \mapsto M (\lambda) g$ is a pole of first order. Moreover, 
$\lim_{\eta \searrow 0} i \eta M (\zeta + i \eta) g$ 
exists for all $g \in \ran \Gamma_0$ and all $\zeta \in \R$.
\item The identity 
\begin{align*}
  \gamma (\mu)^* (A_0 - \lambda)^{-1} \gamma (\nu)g = \frac{M (\lambda)g}{(\lambda - \nu) (\lambda 
- \overline \mu)} + \frac{M (\overline \mu)g}{(\lambda-\overline \mu) 
(\nu-\overline \mu )} + \frac{M (\nu)g}{(\nu - \lambda) ( \nu - \overline \mu)}
\end{align*}
holds for all $g\in\ran\Gamma_0$ if $\lambda\not=\nu$, $\lambda\not=\overline\mu$ and $\nu\not=\overline\mu$.
\end{enumerate}
\end{lemma}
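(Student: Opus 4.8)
The plan is to derive (v) purely algebraically from the resolvent-type identity (ii) and the connection formula (iii); there is nothing delicate here, so the proof is essentially a bookkeeping computation, with the three nondegeneracy hypotheses used precisely to license three divisions.

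First I would rewrite (ii) with $\mu$ replaced by $\nu$, obtaining $\gamma(\lambda)g = \bigl(I + (\lambda - \nu)(A_0 - \lambda)^{-1}\bigr)\gamma(\nu)g$ for $g \in \ran\Gamma_0$. Since $\lambda \neq \nu$, this can be solved for the resolvent term:
\[
(A_0 - \lambda)^{-1}\gamma(\nu)g = \frac{1}{\lambda - \nu}\bigl(\gamma(\lambda)g - \gamma(\nu)g\bigr).
\]
Because $\gamma(\mu)^*$ is a bounded operator defined on all of $\cH$ by item (i), I may apply it termwise, which reduces the left-hand side of (v) to a combination of the two quantities $\gamma(\mu)^*\gamma(\lambda)g$ and $\gamma(\mu)^*\gamma(\nu)g$.

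Next I would invoke (iii). Since $\lambda \neq \overline\mu$ and $\nu \neq \overline\mu$, the identity $(\lambda - \overline\mu)\gamma(\mu)^*\gamma(\lambda)g = M(\lambda)g - M(\mu)^* g$, together with its analogue having $\lambda$ replaced by $\nu$, yields
\[
\gamma(\mu)^*\gamma(\lambda)g = \frac{M(\lambda)g - M(\mu)^* g}{\lambda - \overline\mu}, \qquad \gamma(\mu)^*\gamma(\nu)g = \frac{M(\nu)g - M(\mu)^* g}{\nu - \overline\mu}.
\]
Here I would use the inclusion $M(\overline\mu) \subset M(\mu)^*$, also recorded in (iii), together with $g \in \ran\Gamma_0 = \dom M(\overline\mu)$ from (iv), to replace $M(\mu)^* g$ by $M(\overline\mu)g$ in both numerators.

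Finally I would substitute these expressions into the formula from the first step and collect terms. The coefficients of $M(\lambda)g$ and $M(\nu)g$ reproduce the claimed $\frac{1}{(\lambda-\nu)(\lambda-\overline\mu)}$ and $\frac{1}{(\nu-\lambda)(\nu-\overline\mu)}$ at once, while the two terms containing $M(\overline\mu)g$ combine through the elementary partial-fraction identity
\[
\frac{1}{\nu - \overline\mu} - \frac{1}{\lambda - \overline\mu} = \frac{\lambda - \nu}{(\lambda - \overline\mu)(\nu - \overline\mu)},
\]
which cancels the prefactor $\frac{1}{\lambda-\nu}$ and produces exactly the middle term $\frac{M(\overline\mu)g}{(\lambda-\overline\mu)(\nu-\overline\mu)}$. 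I do not expect any genuine obstacle; the only points requiring attention are that the hypotheses $\lambda \neq \nu$, $\lambda \neq \overline\mu$, $\nu \neq \overline\mu$ are exactly what justifies the three divisions, and that $g \in \ran\Gamma_0$ ensures every occurring value of the Weyl function and of $M(\mu)^*$ applied to $g$ is well defined.
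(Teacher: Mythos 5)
Your proposal is correct and follows exactly the route the paper indicates: the paper cites \cite{BL07} for items (i)--(iv) and states only that (v) ``is a simple consequence of (ii) and (iii)'', which is precisely the computation you carry out, including the correct justification for applying the everywhere-defined bounded operator $\gamma(\mu)^*$ termwise and for replacing $M(\mu)^*g$ by $M(\overline\mu)g$ via $g\in\ran\Gamma_0=\dom M(\overline\mu)$. The partial-fraction bookkeeping checks out, so there is nothing to add.
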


\subsection{Simple symmetric operators and local simplicity}\label{simplesec}

Let $S$ be a closed, densely defined, symmetric operator in the separable Hilbert space
$\cH$. Recall that $S$ is said to be {\em simple} or {\em completely non-selfadjoint} if there is no nontrivial 
$S$-invariant subspace $\cH_0$ of $\cH$ which reduces $S$ to a selfadjoint operator in $\cH_0$, see~\cite[Chapter~VII-81]{AG93}. According to \cite{K49} the simplicity of $S$ is 
equivalent to the density of the span of the defect spaces of $S$ in $\cH$, i.e., $S$ is simple if and only if
\begin{align}\label{eq:simple}
  \cH = \clsp \bigl\{ \ker (S^* - \nu) : \nu \in \C \setminus \R \bigr\} 
\end{align}
holds; here $\clsp$ stands for the closed linear 
span. Assume that $\{ \cG, \Gamma_0, \Gamma_1 \}$ is a quasi boundary triple for $\overline T=S^*$ with $A_0 = T\upharpoonright\ker\Gamma_0$. 
Then it follows that $S$ is simple
if and only if \eqref{eq:simple} holds with $\ker(S^*-\nu)$ replaced by $\ker(T-\nu)$. Moreover, if $\gamma$ is 
the $\gamma$-field corresponding to the quasi boundary triple $\{ \cG, \Gamma_0, \Gamma_1 \}$ we conclude that $S$ is simple if and only if
\begin{align}\label{eq:simple2}
  \cH = \clsp 
\bigl\{ \gamma (\nu) g : \nu \in \C \setminus \R,\, g \in \ran \Gamma_0 \bigr\}
\end{align}
holds. We also mention that the set $\C \setminus \R$ in \eqref{eq:simple2} can be replaced by any set $G \subset \rho (A_0)$ which has an accumulation point 
in each connected component of $\rho (A_0)$; cf. Lemma~\ref{simplelemma}~(v) below.  

Our aim is to generalize the notion of simplicity and to replace it by some weaker, local condition, which is satisfied in, e.g., the applications in Section~\ref{sec:4}. Instead of \eqref{eq:simple2} we will assume that
\begin{align}\label{eq:localSimple'}
  E (\Delta) \cH=\clsp \bigl\{ E (\Delta) \gamma (\nu) g : \nu \in \C \setminus \R,\, g 
\in \ran \Gamma_0 \bigr\}
\end{align}
holds on a Borel set (later on usually an open interval) $\Delta$; here $E(\cdot)$ denotes the spectral measure of~$A_0$. This condition will be imposed in many of the general results in Section~\ref{sec:abstr}. In the next lemma
we discuss this condition and some consequences of it.

\begin{lemma}\label{simplelemma}
Let $S$ be a closed, densely defined, symmetric operator in 
$\cH$ and let $\{ \cG, \Gamma_0, \Gamma_1 \}$ be a quasi boundary triple for $\overline T=S^*$ with $A_0=T\upharpoonright\ker\Gamma_0$. 
Then the following holds.
\begin{enumerate}
  \item If $S$ is simple then \eqref{eq:localSimple'} is satisfied for every Borel set $\Delta\subset\dR$.
  \item If \eqref{eq:localSimple'} holds for 
some Borel set $\Delta\subset\dR$ then 
\begin{align}\label{eq:localSimple2}
  E (\Delta^\prime) \cH=\clsp \bigl\{ E (\Delta^\prime) \gamma (\nu) g : \nu \in \C \setminus \R,\, g 
\in \ran \Gamma_0 \bigr\}
\end{align}
holds for every Borel set $\Delta^\prime\subset\Delta$. 
 \item If $\delta_1, \delta_2, \dots$ are disjoint open intervals such that 
 \begin{align}\label{eq:zerlegungSimple}
  E (\delta_j) \cH=\clsp \bigl\{ E (\delta_j) \gamma (\nu) g : \nu \in \C \setminus \R,\, g 
 \in \ran \Gamma_0 \bigr\} \quad \text{for~all}~j
 \end{align}
 then~\eqref{eq:localSimple'} holds for $\Delta = \bigcup_{j} \delta_j$.
 \item If \eqref{eq:localSimple'} holds for some Borel set $\Delta\subset\dR$ then $\Delta\cap\sigma_{\rm p} (S)=\emptyset$.
 \item If \eqref{eq:localSimple'} holds and $G$ is a subset of $\rho (A_0)$ which has an accumulation point 
in each connected component of $\rho (A_0)$ then 
\begin{align}\label{eq:localSimple99}
  E (\Delta) \cH = \clsp \bigl\{ E (\Delta) \gamma (\nu) g : \nu \in G,\, g \in \ran \Gamma_0 \bigr\}.
\end{align}
\end{enumerate}
\end{lemma}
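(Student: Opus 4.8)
The plan is to prove the five items of Lemma~\ref{simplelemma} largely independently, relying on the spectral calculus of $A_0$ together with the resolvent identities from Lemma~\ref{lem:gammaWeylProp}. Throughout I write $\cL_\Delta := \clsp\{E(\Delta)\gamma(\nu)g : \nu\in\C\setminus\R,\ g\in\ran\Gamma_0\}$, so that \eqref{eq:localSimple'} reads $E(\Delta)\cH=\cL_\Delta$; note $\cL_\Delta\subset E(\Delta)\cH$ is automatic since $E(\Delta)$ is an orthogonal projection, so the content is always the reverse inclusion.

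For item (i), I would start from the global simplicity criterion \eqref{eq:simple2}, i.e. $\cH=\clsp\{\gamma(\nu)g:\nu\in\C\setminus\R,\ g\in\ran\Gamma_0\}$, and apply the bounded operator $E(\Delta)$ to both sides; since $E(\Delta)$ is continuous it maps a dense set of $\cH$ onto a dense subset of $E(\Delta)\cH$, giving $E(\Delta)\cH=\clsp\{E(\Delta)\gamma(\nu)g\}=\cL_\Delta$. Item (ii) is the key structural step and the one I expect to demand the most care. Given $\Delta'\subset\Delta$, I would use $E(\Delta')=E(\Delta')E(\Delta)$ and try to show $E(\Delta')\cH\subset\cL_{\Delta'}$. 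The natural idea is that $\cL_\Delta$ is invariant under the spectral projections $E(\Delta')$ for $\Delta'\subset\Delta$; the crucial input is Lemma~\ref{lem:gammaWeylProp}~(ii), which shows $(A_0-\lambda)^{-1}\gamma(\mu)g$ is again (up to the explicit affine combination) of the form $\gamma(\cdot)g$, so $\cL_\Delta$ is invariant under all resolvents $(A_0-\lambda)^{-1}$ with $\lambda\in\rho(A_0)$, hence reduces $A_0$. A reducing subspace of $A_0$ that contains $E(\Delta)\gamma(\nu)g$ is automatically invariant under $E(\Delta')$ for every Borel $\Delta'\subset\Delta$ (these commute with the resolvent and lie in the von Neumann algebra generated by $A_0$), and applying $E(\Delta')$ to the identity $E(\Delta)\cH=\cL_\Delta$ then yields \eqref{eq:localSimple2}. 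The hard part will be justifying cleanly that $\cL_\Delta$ is $E(\Delta')$-invariant; I would argue via resolvent-invariance and the fact that spectral projections are strong limits of polynomials in the resolvent.

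Item (iii) follows by an orthogonality argument: suppose $f\perp\cL_\Delta$ with $\Delta=\bigcup_j\delta_j$; then for each $j$, using $E(\Delta)E(\delta_j)=E(\delta_j)$ and that the $\delta_j$ are disjoint, one checks $E(\delta_j)f\perp E(\delta_j)\gamma(\nu)g$ for all $\nu,g$, so \eqref{eq:zerlegungSimple} forces $E(\delta_j)f=0$ for every $j$; since $E(\Delta)=\sum_j E(\delta_j)$ in the strong sense (the $\delta_j$ being disjoint with union $\Delta$), we get $E(\Delta)f=0$, giving $E(\Delta)\cH\subset\cL_\Delta$. For item (iv) I argue by contradiction: if $\lambda_0\in\Delta\cap\sigma_{\rm p}(S)$ with eigenvector $0\neq h\in\ker(S-\lambda_0)$, then for every $\nu\in\C\setminus\R$ and $g\in\ran\Gamma_0$ we have $\gamma(\nu)g\in\ker(T-\nu)$ and $h\in\dom S=\ker\Gamma_0\cap\ker\Gamma_1$; the abstract Green identity \eqref{eq:absGreen} applied to $u=h$, $v=\gamma(\nu)g$ collapses (both boundary terms vanish because $\Gamma_0 h=\Gamma_1 h=0$) to $(\lambda_0-\overline\nu)(h,\gamma(\nu)g)=0$, whence $(h,\gamma(\nu)g)=0$ for all such $\nu,g$; thus $h\perp\gamma(\nu)g$ and, since $E(\{\lambda_0\})h=h$ because $h$ is an eigenvector of $A_0$ at $\lambda_0\in\Delta$, one finds $h=E(\{\lambda_0\})h\perp\cL_\Delta$, contradicting \eqref{eq:localSimple'} (as $h\neq0$ lies in $E(\Delta)\cH$). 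This shows $\Delta\cap\sigma_{\rm p}(S)=\emptyset$.

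Finally, for item (v) the content is that the parameter set $\C\setminus\R$ in \eqref{eq:localSimple'} may be shrunk to any $G\subset\rho(A_0)$ with an accumulation point in each connected component of $\rho(A_0)$. I would fix $g\in\ran\Gamma_0$, $u\in E(\Delta)\cH$, and consider the scalar map $\nu\mapsto (E(\Delta)\gamma(\nu)g, u)$; using the resolvent representation of $\gamma$ in Lemma~\ref{lem:gammaWeylProp}~(ii) this function is holomorphic on $\rho(A_0)$. If $u\perp E(\Delta)\gamma(\nu)g$ for all $\nu\in G$, then this holomorphic function vanishes on a set with an accumulation point in each component, so by the identity theorem it vanishes throughout $\rho(A_0)$, in particular on $\C\setminus\R$; hence $u\perp\cL_\Delta=E(\Delta)\cH$ and so $u=0$. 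This gives \eqref{eq:localSimple99}. The only subtlety is verifying holomorphy of $\nu\mapsto\gamma(\nu)g$ on all of $\rho(A_0)$, which is exactly what Lemma~\ref{lem:gammaWeylProp}~(ii) provides, so I would invoke it directly.
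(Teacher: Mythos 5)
Items (i), (iv) and (v) of your proposal are correct and essentially coincide with the paper's arguments (in (iv) you use Green's identity where the paper uses $\gamma(\nu)^*=\Gamma_1(A_0-\overline\nu)^{-1}$; this is only a cosmetic difference). In item (ii) the reducing-subspace detour is both unnecessary and misdirected: invariance of $\cL_\Delta$ under $E(\Delta')$ would only give $E(\Delta')\cH\subset\cL_\Delta$, whereas the claim is $E(\Delta')\cH\subset\cL_{\Delta'}$. What actually does the job (and is the paper's entire proof of (ii)) is the one-line observation that $E(\Delta')E(\Delta)\gamma(\nu)g=E(\Delta')\gamma(\nu)g$ for $\Delta'\subset\Delta$, so the bounded operator $E(\Delta')$ maps the generating vectors of $\cL_\Delta$ onto those of $\cL_{\Delta'}$ and hence, by continuity, a sequence in $\mathrm{span}\,\cL_\Delta$ converging to $u\in E(\Delta')\cH$ is mapped to a sequence in $\mathrm{span}\,\cL_{\Delta'}$ converging to $E(\Delta')u=u$. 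There is no hard part here.

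The genuine gap is in item (iii), at the step ``one checks $E(\delta_j)f\perp E(\delta_j)\gamma(\nu)g$.'' From $f\perp\cL_\Delta$ you only get
\begin{align*}
0=(f,E(\Delta)\gamma(\nu)g)=\sum_k (f,E(\delta_k)\gamma(\nu)g),
\end{align*}
and disjointness of the $\delta_k$ gives no reason for the individual terms $(f,E(\delta_j)\gamma(\nu)g)=(E(\delta_j)f,E(\delta_j)\gamma(\nu)g)$ to vanish. This term-by-term vanishing is equivalent to the assertion $E(\delta_j)\gamma(\nu)g\in\cL_\Delta$, which is precisely the nontrivial content of item (iii) and is what the paper proves: it first uses Lemma~\ref{lem:gammaWeylProp}~(ii) to rewrite $\cL_\Delta$ as the closed span of the vectors $E(\Delta)\gamma(\mu)g$ and $E(\Delta)(A_0-\nu)^{-1}\gamma(\mu)g$, and then applies Stone's formula on $\delta_j=(\alpha_j,\beta_j)$ (integrating the resolvent difference over $[\alpha_j+\eta,\beta_j-\eta]$ and letting the parameters tend to zero) to conclude $E(\delta_j)\gamma(\mu)h=E(\delta_j)E(\Delta)\gamma(\mu)h\in\cL_\Delta$. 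Your orthogonality formulation would then close immediately, but without this resolvent-plus-Stone argument the proof of (iii) is incomplete.
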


\begin{proof}
Assertion (i) is a consequence of item (ii) since \eqref{eq:localSimple'} holds with $\Delta = \dR$ when $S$ is simple. 

For~(ii) note that the inclusion $\supset$ in~\eqref{eq:localSimple2}  
clearly holds. For the converse inclusion let $u\in E(\Delta^\prime)\cH$. As $\Delta^\prime\subset\Delta$
we have $u\in E(\Delta)\cH$ and hence there exists a sequence $(v_n)$, $n=1,2,\dots$, in the linear span of 
$\{ E (\Delta) \gamma (\nu) g : \nu \in \C \setminus \R,\, g 
\in \ran \Gamma_0 \}$ which converges to $u$. Then $(E(\Delta^\prime)v_n)$, $n=1,2,\dots$, is a sequence in the  
linear span of 
$\{ E (\Delta^\prime) \gamma (\nu) g : \nu \in \C \setminus \R,\, g 
\in \ran \Gamma_0 \}$ which converges to $E(\Delta^\prime)u=u$.

In order to prove (iii) let $\delta_j$ be as in the assumptions and let $\Delta = \bigcup_{j} \delta_j$. The inclusion $\supset$ in~\eqref{eq:localSimple'} 
again is obvious. For the converse inclusion let $u\in E(\Delta)\cH$ and define
\begin{equation}\label{htildemalwieder}
 \widetilde \cH := \clsp \bigl\{ E (\Delta) \gamma (\nu) g : \nu \in \C \setminus \R,\, g \in \ran \Gamma_0 \bigr\}.
\end{equation}
Since
\begin{equation*}
u=E(\Delta) u=\sum_{j} E(\delta_j)u
\end{equation*}
it is sufficient to show $E(\delta_j)u\in\widetilde \cH$ for all $j$. Note first that by assumption \eqref{eq:zerlegungSimple} we have
\begin{equation*}
 E(\delta_j)u \in \clsp \bigl\{ E (\delta_j) \gamma (\mu) h : \mu \in \C \setminus \R,\, h 
 \in \ran \Gamma_0 \bigr\}
\end{equation*}
and hence the assertion follows if we verify 
\begin{equation}\label{schoenwaers}
 E (\delta_j) \gamma (\mu) h \in \widetilde \cH
\end{equation}
for all $\mu \in \C \setminus \R$, $h \in \ran \Gamma_0$, and all $j$. 
For this purpose consider some fixed $ E (\delta_j) \gamma (\mu) h$. According to Lemma~\ref{lem:gammaWeylProp}~(ii) 
we have 
\begin{align*}
 \gamma (\nu) g = \gamma (\mu) g + (\nu - \mu) (A_0 - \nu)^{-1} \gamma (\mu) g
\end{align*} 
for all $\nu \in \C \setminus \R$ and all $g \in \ran \Gamma_0$, and 
hence $\widetilde\cH$ in \eqref{htildemalwieder} can be rewritten in the form
\begin{align*}
 \widetilde \cH = \clsp \bigg\{ E (\Delta) \gamma (\mu) g, E (\Delta) (A_0 - \nu)^{-1} \gamma (\mu) g : \nu \in \C \setminus \R, g \in \ran \Gamma_0 \bigg\}.
\end{align*}
It follows that for $\eta, \eps > 0$ the element
\begin{align*}
 \int_{\alpha_j + \eta}^{\beta_j - \eta} E (\Delta) \big( ( A_0 - (\lambda + i \eps) )^{-1} - (A_0 - (\lambda - i \eps) )^{-1} \big) \gamma (\mu) h \,d \lambda
\end{align*}
belongs to $\widetilde \cH$, where we have written $\delta_j = (\alpha_j, \beta_j)$. From this and Stone's formula it follows
\begin{align*}
 E (\delta_j) \gamma (\mu) h = E (\delta_j) E (\Delta) \gamma (\mu) h \in \widetilde \cH,
\end{align*}
which proves \eqref{schoenwaers} and, hence, yields the inclusion $\subset$ in \eqref{eq:localSimple'}. Item (iii) is proved.

In order to verify (iv), assume that $S u = \lambda u$ for some $u\in\dom S$ and 
$\lambda \in \Delta$. Then $A_0 u = \lambda u$ and hence $u \in E (\Delta) \cH$. 
On the other hand, for  $g \in \ran \Gamma_0$ and $\nu \in 
\C \setminus \R$ it follows together with Lemma~\ref{lem:gammaWeylProp}~(i) that
\begin{align*}
  (u, E (\Delta) \gamma (\nu) g) = (\gamma (\nu)^* u, g) = \big( 
\Gamma_1 (A_0 - \overline \nu)^{-1} u, g \big) = (\lambda - \overline 
\nu)^{-1} (\Gamma_1 u, g) = 0,
\end{align*}
as $u \in \dom S \subset \ker \Gamma_1$. Hence, $u\in E (\Delta) \cH$ is orthogonal to the linear span of the elements 
$E(\Delta)\gamma(\nu)g$, $\nu\in\C \setminus \R$, $g\in\ran\Gamma_0$, which is dense in $E (\Delta) \cH$ by \eqref{eq:localSimple'}.
This implies $u = 0$ and thus $S$ does not possess eigenvalues in $\Delta$. 

It remains to show (v). 
The inclusion $\supset$ in \eqref{eq:localSimple99} is obvious. In order to prove the inclusion $\subset$
it suffices to verify that the vectors 
$E (\Delta) \gamma (\nu) g$, $g \in \ran \Gamma_0$, $\nu \in G$, span a dense set in $E (\Delta)\cH$. Suppose that $E (\Delta) u$ is orthogonal to this set, that is,
\begin{equation}\label{samstag2}
  0 = (E (\Delta) \gamma (\nu) g,E (\Delta) u)
\end{equation}
holds for all $g \in \ran \Gamma_0$ and all $\nu \in G$. Since $\rho(A_0)\ni\nu\mapsto \gamma 
(\nu) g$ is analytic for each $g \in \ran \Gamma_0$ (see Lemma~\ref{lem:gammaWeylProp}~(ii)) it follows that for each $g \in \ran \Gamma_0$ 
the function $\nu\mapsto (E (\Delta) \gamma (\nu) g, E (\Delta) u)$ is analytic on $\rho(A_0)$, and hence \eqref{samstag2} implies that this function is identically equal to zero. Now \eqref{eq:localSimple'} yields $E (\Delta) u=0$ and (v) follows. 
\end{proof}

\section{Spectral properties of selfadjoint operators and corresponding 
Weyl functions}\label{sec:abstr}

This section contains the main abstract results of this paper. We describe the spectral properties of a given 
selfadjoint operator  by means of a 
corresponding Weyl function. For this we fix the following setting.

\begin{assumption}\label{ass} 
Let $S$ be a closed, densely defined, symmetric operator in the separable Hilbert 
space $\cH$ and let $\{ \cG, \Gamma_0, \Gamma_1\}$ be a quasi boundary 
triple for $\overline T = S^*$ with corresponding $\gamma$-field $\gamma$ and Weyl function $M$. Moreover, let $A_0 = T \upharpoonright \ker \Gamma_0$ and denote by $E (\cdot)$ the spectral measure of $A_0$.
\end{assumption}

\subsection{Eigenvalues and corresponding eigenspaces}

Let us start with a characterization of the isolated and embedded 
eigenvalues as well as the corresponding eigenspaces of a selfadjoint 
operator by means of an associated Weyl function. We write s-$\lim$ for 
the strong limit of an operator function. 

\begin{theorem}\label{thm:eigenGeneral}
Let Assumption~\ref{ass} be satisfied. Then 
$\lambda\in\dR$ is an eigenvalue of $A_0$ such that $\cK := \ker (A_0 - \lambda) 
\ominus \ker (S - \lambda) \neq \{0\}$ if and only if $R_\lambda M :=$ 
\textup{s}-$\lim_{\eta \searrow 0} i \eta M (\lambda + i \eta) \neq 0$. If 
$\dim \cK < \infty$ then the mapping
  \begin{align}\label{eq:tau}
   \tau : \cK \to \ran R_\lambda M, \quad u \mapsto \Gamma_1 u,
  \end{align}
  is bijective; if $\dim \cK = \infty$ then the mapping 
  \begin{align}\label{eq:taugen} 
   \tau : \cK \to \cl_\tau \bigl(\ran R_\lambda M\bigr), \quad u \mapsto \Gamma_1 u,
  \end{align}
  is bijective, where $\cl_\tau$ denotes the closure in the normed space 
$\ran \tau$.
\end{theorem}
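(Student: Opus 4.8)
The plan is to compute the strong limit $R_{\lambda_0}M$ (I write $\lambda_0\in\dR$ for the real point, reserving $\lambda$ for the spectral parameter) explicitly in terms of the spectral projection $E(\{\lambda_0\})$ and the $\gamma$-field, and then to recognise the eigenspace $\cK$ inside $\ran\gamma$. Fix $\mu\in\C\setminus\R\subset\rho(A_0)$ and start from the representation of $M(\lambda)g$ in Lemma~\ref{lem:gammaWeylProp}~(iv). Putting $\lambda=\lambda_0+i\eta$, multiplying by $i\eta$ and letting $\eta\searrow0$, the summands $i\eta\Real M(\mu)g$ and $\gamma(\mu)^* i\eta(\lambda-\Real\mu)\gamma(\mu)g$ vanish, the scalar $(\lambda-\mu)(\lambda-\overline\mu)$ tends to $(\lambda_0-\mu)(\lambda_0-\overline\mu)$, and the only genuine input is the spectral-theorem identity $\textup{s-}\lim_{\eta\searrow0}i\eta(A_0-\lambda_0-i\eta)^{-1}=-E(\{\lambda_0\})$, where the uniform bound $\|i\eta(A_0-\lambda_0-i\eta)^{-1}\|\le1$ legitimises the termwise passage to the limit. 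Using $\gamma(\mu)^*=\Gamma_1(A_0-\overline\mu)^{-1}$ from Lemma~\ref{lem:gammaWeylProp}~(i) together with $(A_0-\overline\mu)^{-1}E(\{\lambda_0\})=(\lambda_0-\overline\mu)^{-1}E(\{\lambda_0\})$, I expect to arrive at
\begin{align*}
 R_{\lambda_0}M\,g=(\mu-\lambda_0)\,\Gamma_1 E(\{\lambda_0\})\gamma(\mu)g,\qquad g\in\ran\Gamma_0 .
\end{align*}

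Next I would describe $\cK$ through the vectors $E(\{\lambda_0\})\gamma(\nu)g$. Since $\dom S=\ker\Gamma_0\cap\ker\Gamma_1$ and $\ker(A_0-\lambda_0)\subset\dom A_0=\ker\Gamma_0$, one has $\ker(S-\lambda_0)=\ker(A_0-\lambda_0)\cap\ker\Gamma_1$, so $\Gamma_1$ is injective on $\cK=\ker(A_0-\lambda_0)\ominus\ker(S-\lambda_0)$. For $w\in\ker(A_0-\lambda_0)$ a short computation with $\gamma(\nu)^*=\Gamma_1(A_0-\overline\nu)^{-1}$ gives $(w,E(\{\lambda_0\})\gamma(\nu)g)=(\lambda_0-\overline\nu)^{-1}(\Gamma_1 w,g)$; taking $w\in\ker(S-\lambda_0)$ shows $E(\{\lambda_0\})\gamma(\nu)g\perp\ker(S-\lambda_0)$, hence these vectors lie in $\cK$, and since $\ran\Gamma_0$ is dense in $\cG$ the same identity shows that $w\in\ker(A_0-\lambda_0)$ is orthogonal to all of them exactly when $\Gamma_1 w=0$, i.e. when $w\in\ker(S-\lambda_0)$. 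This yields
\begin{align*}
 \cK=\clsp\bigl\{E(\{\lambda_0\})\gamma(\nu)g:\nu\in\C\setminus\R,\ g\in\ran\Gamma_0\bigr\}.
\end{align*}
Moreover Lemma~\ref{lem:gammaWeylProp}~(ii) gives $E(\{\lambda_0\})\gamma(\nu)g=\tfrac{\lambda_0-\mu}{\lambda_0-\nu}E(\{\lambda_0\})\gamma(\mu)g$, so the single value $\mu$ already suffices and $\cL:=\{E(\{\lambda_0\})\gamma(\mu)g:g\in\ran\Gamma_0\}$ is a dense subspace of $\cK$ with $\ran R_{\lambda_0}M=\Gamma_1(\cL)$.

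From here the equivalence and the bijectivity follow. As $\Gamma_1$ is injective on $\cK$ and each $E(\{\lambda_0\})\gamma(\mu)g\in\cK$, the formula for $R_{\lambda_0}M$ shows that $R_{\lambda_0}M=0$ iff $\cL=\{0\}$, which by density means $\cK=\{0\}$; this gives $R_{\lambda_0}M\neq0\Leftrightarrow\cK\neq\{0\}$ (the non-eigenvalue case is covered since then $E(\{\lambda_0\})=0$). For $\tau=\Gamma_1|_{\cK}$, injectivity is already noted, and I would equip $\ran\tau$ with the norm transported from the Hilbert space $\cK$ via the bijection $\tau$, so that $\tau:\cK\to\ran\tau$ is isometric. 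If $\dim\cK<\infty$ the dense subspace $\cL$ equals $\cK$, whence $\ran R_{\lambda_0}M=\Gamma_1(\cL)=\Gamma_1(\cK)=\ran\tau$ and $\tau$ is onto; if $\dim\cK=\infty$ then $\cL$ is merely dense, so $\ran R_{\lambda_0}M=\tau(\cL)$ is dense in $\ran\tau$, i.e. $\cl_\tau(\ran R_{\lambda_0}M)=\ran\tau$, and $\tau:\cK\to\cl_\tau(\ran R_{\lambda_0}M)$ is again bijective.

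The main obstacle I anticipate is the infinite-dimensional case: there $\ran R_{\lambda_0}M$ need be neither closed nor all of $\Gamma_1(\cK)$, which is precisely why the statement passes to $\cl_\tau$ and why the norm on $\ran\tau$ must be transported from $\cK$ rather than inherited from $\cG$ — on the latter $\Gamma_1$ is unbounded, so density in $\cH$ would not transfer to density in the $\cG$-norm. Establishing that a single $\mu$ already spans a dense subspace (via Lemma~\ref{lem:gammaWeylProp}~(ii)) and carefully justifying the termwise limit in the Stone-type argument are the remaining points that need care.
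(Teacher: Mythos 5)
Your argument is correct and is essentially the paper's own proof: both compute $R_\lambda M g=(\mu-\lambda)\,\Gamma_1 E(\{\lambda\})\gamma(\mu)g$ from the resolvent representation of $M$ together with the strong limit $i\eta(A_0-\lambda-i\eta)^{-1}\to -E(\{\lambda\})$, both identify $\cK$ as the closed linear span of the vectors $E(\{\lambda\})\gamma(\nu)g$ by means of the identity $\gamma(\nu)^*u=(\lambda-\overline\nu)^{-1}\Gamma_1 u$ on $\ker(A_0-\lambda)$ and the density of $\ran\Gamma_0$, and both deduce the equivalence and the bijectivity of $\tau$ from the injectivity of $\Gamma_1\upharpoonright\cK$. (The paper reaches the same limit formula via the three-term identity of Lemma~\ref{lem:gammaWeylProp}~(v) rather than directly from \eqref{eq:Mformula}; this is immaterial.)

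The one remark that is off is your closing claim that the norm on $\ran\tau$ \emph{must} be transported from $\cK$ because ``$\Gamma_1$ is unbounded'' in the $\cG$-norm. The identity you yourself use, $\gamma(\mu)^*u=(\lambda-\overline\mu)^{-1}\Gamma_1 u$ for $u\in\ker(A_0-\lambda)$, rearranges to $\Gamma_1 u=(\lambda-\overline\mu)\gamma(\mu)^*u$ and therefore shows that $\Gamma_1\upharpoonright\cK$ \emph{is} bounded from $(\cK,\|\cdot\|_\cH)$ into $\cG$; this is precisely how the paper obtains the inclusion $\ran(\Gamma_1\upharpoonright\cK)\subset\overline{\ran R_\lambda M}$ with the closure taken in $\cG$. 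Consequently the image of your dense subspace under $\tau$ is dense in $\ran\tau$ also with respect to the $\cG$-norm, the two readings of $\cl_\tau$ produce the same set $\ran\tau$, and your conclusion stands; only the stated justification for preferring the transported norm is incorrect.
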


\begin{remark}\label{rem:residue}
Recall that the limit  $(R_\lambda M) g = \lim_{\eta \searrow 0} i \eta M (\lambda + i \eta)g$ exists for all 
$\lambda\in\dR$ and all $g\in\ran\Gamma_0$ by Lemma~\ref{lem:gammaWeylProp}~(iv). 
Moreover, if $\lambda$ is an isolated singularity of $M$, that is, 
there exists an open neighborhood $\cO$ of $\lambda$ such that $M$ is strongly holomorphic on $\cO \setminus \{ \lambda \}$, then 
$R_\lambda M \neq 0$ if and only if for some $g \in \ran \Gamma_0$ the 
$\cG$-valued function $\zeta\mapsto M(\zeta) g$ has a pole at $\lambda$. In this 
case $R_\lambda M$ coincides with the residue $\Res_\lambda M$ of $M$ at 
$\lambda$ in the strong sense, i.e.,
\begin{align*}
  (R_\lambda M) g = (\Res_\lambda M)g= \frac{1}{2 \pi i} \int_\cC M (z) g \,d z, \quad g \in 
\ran \Gamma_0,
\end{align*}
where $\cC$ denotes the boundary of an open ball $B$ such that $M$ is strongly holomorphic in a neighborhood of $\overline B$ 
except the point~$\lambda$.
We also remark that without additional assumptions the Weyl function is not able to 
distinguish between isolated and embedded eigenvalues of $A_0$; 
cf.~Proposition~\ref{prop:isolatedEV} below.
\end{remark}

\begin{proof}[Proof of Theorem~\ref{thm:eigenGeneral}]
Let $\lambda \in \R$ be fixed. Note first that the mapping $\Gamma_1 
\upharpoonright \cK$ is injective. Indeed, for $u \in \cK =\ker (A_0 - \lambda) 
\ominus \ker (S - \lambda)$ with 
$\Gamma_1 u = 0$ we have $u \in \ker\Gamma_0\cap\ker\Gamma_1=\dom S$ and $S u = \lambda u$; hence $u 
= 0$. It is our aim to prove the inclusions
\begin{align}\label{eq:incFlambda}
  \ran R_\lambda M \subset \ran (\Gamma_1 \upharpoonright \cK ) \subset 
\overline{\ran R_\lambda M}.
\end{align}
 From this it follows immediately that the mapping $\tau$ 
in~\eqref{eq:tau} and \eqref{eq:taugen} is well-defined and bijective.

In order to verify~\eqref{eq:incFlambda} let $g \in \ran \Gamma_0$ and 
denote by $E(\cdot)$ the spectral measure of $A_0$. Then
\begin{align}\label{calc}
  \big\| i \eta (A_0 - & (\lambda + i \eta))^{-1} \gamma (\nu) g + E 
(\{\lambda\}) \gamma (\nu) g \big\|^2 \nonumber \\
  & = \int_\R \left| \frac{i \eta}{t - (\lambda + i \eta)} + 
\chi_{\{\lambda\}}(t) \right|^2 d (E (t) \gamma (\nu) g, \gamma (\nu) g ) 
\to 0 \quad \text{as} \quad \eta \searrow 0
\end{align}
holds for all $\nu \in \C \setminus \R$. Since by Lemma~\ref{lem:gammaWeylProp}~(i) the 
operator $\gamma (\nu)^*$ is bounded, it follows from~\eqref{calc}
\begin{align}\label{eq:Plambda}
  \lim_{\eta \searrow 0} i \eta \gamma (\nu)^* (A_0 - (\lambda + i 
\eta))^{-1} \gamma (\nu) g = - \gamma (\nu)^* E (\{\lambda\}) \gamma (\nu) g
\end{align}
for all $\nu \in \C \setminus \R$, and together with Lemma~\ref{lem:gammaWeylProp}~(v) we conclude that 
the limit on the left hand side of \eqref{eq:Plambda}
coincides with
\begin{equation}\label{mitternacht}
\lim_{\eta \searrow 0} i \eta
\, \frac{M (\lambda + i \eta)g}{((\lambda + i \eta) - \nu) ((\lambda + i \eta)
- \overline \nu)}=\frac{(R_\lambda M) g}{(\lambda-\nu)(\lambda-\overline\nu)}.
\end{equation}
With the help of 
Lemma~\ref{lem:gammaWeylProp}~(i), \eqref{eq:Plambda} and \eqref{mitternacht} we obtain
\begin{align*}
  \Gamma_1 & E (\{\lambda\}) \gamma (\nu) g \nonumber \\
  & = \Gamma_1 (A_0 - \overline \nu)^{-1} (A_0 - \overline \nu) E 
(\{\lambda\}) \gamma (\nu) g = (\lambda - \overline \nu) \gamma 
(\nu)^* E (\{\lambda\}) \gamma (\nu) g \nonumber \\
  & = - (\lambda - \overline \nu) \lim_{\eta \searrow 0} i \eta \gamma 
(\nu)^* (A_0 - (\lambda + i \eta))^{-1} \gamma (\nu) g = 
\frac{1}{\nu - \lambda}(R_\lambda M) g
\end{align*}
for all $\nu \in \C \setminus \R$. 
Denoting by $P$ the orthogonal projection in $\cH$ onto $\cK = \ker (A_0 
- \lambda) \ominus \ker (S - \lambda)$ it follows
\begin{align}\label{eq:stronglimit}
  \Gamma_1 P \gamma (\nu) g = \frac{1}{\nu - \lambda}(R_\lambda M) g,
\end{align}
where we have used $\Gamma_1 (\ker (S - \lambda)) = \{0\}$. From this 
the first inclusion in~\eqref{eq:incFlambda} follows immediately. 

For 
the second inclusion in~\eqref{eq:incFlambda} note that the mapping $\Gamma_1\upharpoonright\cK$ is continuous as
$\Gamma_ 1u=\gamma(\mu)^*(A_0-\overline\mu)u=(\lambda-\overline\mu)\gamma(\mu)^*u$ holds for all $u\in\cK$ by Lemma~\ref{lem:gammaWeylProp}~(i).
Moreover,
for each $\nu \in \C \setminus \R$ 
the linear space $\{P \gamma (\nu) g : g \in 
\ran \Gamma_0 \}$ is dense in $\cK$. In fact, fix $\nu \in \C \setminus \R$ and let $u 
\in \cK$ be orthogonal to $P \gamma (\nu) g$ for all $g \in \ran 
\Gamma_0$. Then
\begin{align*}
  0 = (u, P \gamma (\nu) g) = (\gamma (\nu)^* u, g) = (\Gamma_1 (A_0 - 
\overline \nu)^{-1} u, g) = (\lambda - \overline \nu)^{-1} (\Gamma_1 u, g)
\end{align*}
by Lemma~\ref{lem:gammaWeylProp}~(i), which implies $\Gamma_1 u = 0$ as $\ran\Gamma_0$ is dense. 
Hence we have $u \in \ker\Gamma_0\cap\ker\Gamma_1=\dom S$ and this implies $u \in \cK\cap \ker (S - \lambda)$, so that $u=0$.
Now the second inclusion 
in~\eqref{eq:incFlambda} follows together with \eqref{eq:stronglimit} and the fact that $\Gamma_1\upharpoonright\cK$ is continuous. 
Hence the mapping $\tau$ 
in~\eqref{eq:taugen} is well-defined and bijective. If $\cK$ is 
finite-dimensional then clearly the closure in~\eqref{eq:taugen} can be 
omitted and we end up with the bijectivity of~\eqref{eq:tau}.
\end{proof}

As an immediate consequence of Theorem~\ref{thm:eigenGeneral} all eigenvalues of $A_0$ 
which are not eigenvalues of $S$ can be characterized as ``generalized 
poles'' of the Weyl function.

\begin{corollary}\label{thm:eigen}
Let Assumption~\ref{ass} be satisfied, and assume that $\lambda \in \R$
is not an eigenvalue of $S$. Then $\lambda$ is an eigenvalue 
of $A_0$ if and only if $R_\lambda M :=$ \textup{s}-$\lim_{\eta \searrow 
0} i \eta M (\lambda + i \eta) \neq 0$. If the multiplicity of the 
eigenvalue $\lambda$ is finite then the mapping
  \begin{align*}
   \tau : \ker (A_0 - \lambda) \to \ran R_\lambda M, \quad u \mapsto 
\Gamma_1 u,
  \end{align*}
  is bijective; if the multiplicity of the eigenvalue $\lambda$ is 
infinite then the mapping
  \begin{align*}
   \tau : \ker (A_0 - \lambda) \to \cl_\tau \bigl(\ran R_\lambda M\bigr), \quad u 
\mapsto \Gamma_1 u,
  \end{align*}
  is bijective, where $\cl_\tau$ denotes the closure in the normed space 
$\ran \tau$.
\end{corollary}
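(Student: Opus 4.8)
The plan is to deduce Corollary~\ref{thm:eigen} directly from Theorem~\ref{thm:eigenGeneral} by showing that the extra hypothesis -- that $\lambda$ is not an eigenvalue of $S$ -- forces $\ker(S-\lambda)=\{0\}$, so that the subspace $\cK$ from the theorem collapses to the full eigenspace $\ker(A_0-\lambda)$.

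First I would observe that the assumption "$\lambda$ is not an eigenvalue of $S$'' means precisely $\ker(S-\lambda)=\{0\}$. Consequently
\begin{align*}
 \cK := \ker(A_0-\lambda)\ominus\ker(S-\lambda)=\ker(A_0-\lambda)\ominus\{0\}=\ker(A_0-\lambda).
\end{align*}
With this identification the two cases in the statement correspond exactly to the two cases in Theorem~\ref{thm:eigenGeneral}: the multiplicity of $\lambda$ as an eigenvalue of $A_0$ is $\dim\ker(A_0-\lambda)=\dim\cK$, which is finite or infinite according to whether $\dim\cK<\infty$ or $\dim\cK=\infty$.

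Next I would apply Theorem~\ref{thm:eigenGeneral} verbatim. The theorem asserts that $\cK\neq\{0\}$ if and only if $R_\lambda M\neq 0$; since $\cK=\ker(A_0-\lambda)$, this says that $\lambda$ is an eigenvalue of $A_0$ if and only if $R_\lambda M\neq 0$, which is the first claim. For the bijectivity statements, the map $\tau:\cK\to\ran R_\lambda M$, $u\mapsto\Gamma_1 u$, of \eqref{eq:tau} becomes $\tau:\ker(A_0-\lambda)\to\ran R_\lambda M$ in the finite-dimensional case, and likewise the map of \eqref{eq:taugen} becomes $\tau:\ker(A_0-\lambda)\to\cl_\tau(\ran R_\lambda M)$ in the infinite-dimensional case. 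Both are bijective by the theorem.

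There is essentially no obstacle here; the corollary is a specialization of Theorem~\ref{thm:eigenGeneral} under the simplifying hypothesis $\ker(S-\lambda)=\{0\}$. The only point warranting a word of care is the logical passage "$\lambda$ is not an eigenvalue of $S$" $\Leftrightarrow$ "$\ker(S-\lambda)=\{0\}$", together with the remark that the multiplicity of the eigenvalue $\lambda$ of $A_0$ equals $\dim\cK$; once these identifications are recorded, the conclusion is immediate and the proof reduces to substituting $\ker(A_0-\lambda)$ for $\cK$ throughout.
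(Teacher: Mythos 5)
Your proposal is correct and is exactly the paper's route: the corollary is stated there as an immediate consequence of Theorem~\ref{thm:eigenGeneral}, obtained by noting that the hypothesis $\lambda\notin\sigma_{\rm p}(S)$ gives $\ker(S-\lambda)=\{0\}$ and hence $\cK=\ker(A_0-\lambda)$. Nothing further is needed.
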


\subsection{Continuous, absolutely continuous, and singular continuous 
spectra}

In this subsection we describe the continuous, absolutely continuous, and singular continuous 
spectrum of a selfadjoint operator $A_0$ by means of the limits of an associated Weyl function $M$. Again we fix the setting in Assumption~\ref{ass}.
It is clear that an additional minimality or simplicity condition must be imposed. Usually one assumes that the 
underlying symmetric operator $S$ is simple; cf. \cite{BMN02}. However, for our purposes the
weaker assumption of local simplicity in Section~\ref{simplesec} is more appropriate: in order to characterize the spectrum of $A_0$ in an open interval $\Delta \subset \R$ we assume that 
\begin{align}\label{eq:localSimple}
  E (\Delta) \cH = \clsp \bigl\{ E (\Delta) \gamma (\nu) g : \nu \in \C \setminus \R,\, g 
\in \ran \Gamma_0 \bigr\}.
\end{align}
For instance, in Theorem~\ref{thm:eigenGeneral} it turned out 
that an eigenvalue $\lambda$ of $A_0$ with its full multiplicity can only be detected by the Weyl function if $\lambda \notin \sigma_{\rm p} (S)$. 
This condition corresponds to the identity~\eqref{eq:localSimple} with $\Delta$ replaced by $\{\lambda\}$; cf.~Lemma~\ref{simplelemma}~(iv).

In the next theorem we agree to say that the Weyl function $M$ can be continued analytically to some point $\lambda \in \R$ if 
there exists an open neighborhood $\cO$ of $\lambda$ in~$\C$ such that 
$\zeta\mapsto M (\zeta) g$ can be continued analytically to $\cO$ for all $g \in \ran 
\Gamma_0$. We mention that the proof of (i) is similar to the proof of \cite[Theorem 1.1]{DLS93}.

\begin{theorem}\label{thm:specTotal}
Let Assumption~\ref{ass} be satisfied, and let 
$\Delta \subset \R$ be an open interval such that the condition~\eqref{eq:localSimple} is satisfied. Then 
the following assertions hold for each $\lambda \in \Delta$.
\begin{enumerate}
  \item $\lambda \in \rho (A_0)$ if and only if $M$ can be 
continued analytically into $\lambda$.
  \item $\lambda \in \sigma_{\rm c} (A_0)$ if and only if 
\textup{s}-$\lim_{\eta \searrow 0} i \eta M (\lambda + i \eta) = 0$ and $M$ cannot be continued analytically into $\lambda$.
\end{enumerate}
If $S$ is simple then~{\rm (i)} and~{\rm (ii)} hold for all $\lambda \in \R$.
\end{theorem}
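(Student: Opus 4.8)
The plan is to establish (i) and then obtain (ii) and the final sentence as consequences of (i) together with the eigenvalue criterion of Corollary~\ref{thm:eigen}. In (i) one implication is immediate: if $\lambda\in\rho(A_0)$, then since $\rho(A_0)$ is open and $\zeta\mapsto M(\zeta)g$ is holomorphic on $\rho(A_0)$ for every $g\in\ran\Gamma_0$ by Lemma~\ref{lem:gammaWeylProp}~(iv), the Weyl function continues analytically into $\lambda$. For the converse, which carries the content, I would assume $M$ continues analytically into $\lambda\in\Delta$, to an open neighborhood $\cO\subset\C$ of $\lambda$ that I shrink so that $\cO\cap\R\subset\Delta$, and aim to show that the spectral measure $E$ of $A_0$ has no mass near $\lambda$. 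The engine is Lemma~\ref{lem:gammaWeylProp}~(v): for fixed $\mu,\nu\in\C\setminus\R$ and $g\in\ran\Gamma_0$ it writes $z\mapsto\gamma(\mu)^*(A_0-z)^{-1}\gamma(\nu)g$ as $M(z)g/((z-\nu)(z-\overline\mu))$ plus terms independent of $z$. Since $\nu$ and $\overline\mu$ are nonreal, the denominators stay bounded away from zero for $z$ near the real point $\lambda$, so the analytic continuation of $M(z)g$ to $\cO$ forces the whole right-hand side to continue analytically to $\cO$. Pairing with $h\in\ran\Gamma_0$ via the adjoint relation $(\gamma(\mu)^*x,h)_\cG=(x,\gamma(\mu)h)_\cH$ (Lemma~\ref{lem:gammaWeylProp}~(i)), I then conclude that every resolvent matrix element $z\mapsto((A_0-z)^{-1}\gamma(\nu)g,\gamma(\mu)h)$ continues analytically across $\lambda$.

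Next I would invoke the Cauchy transform principle. For $v\in\cH$ the function $((A_0-z)^{-1}v,v)=\int_\R(t-z)^{-1}\,d(E(t)v,v)$ is the Cauchy transform of the positive measure $(E(\cdot)v,v)$, and if it continues analytically across a real interval $I$ then that measure carries no mass on $I$. Applying this to finite linear combinations $v=\sum_k c_k\gamma(\nu_k)g_k$, whose diagonal matrix element $((A_0-z)^{-1}v,v)$ is a finite sum of the off-diagonal elements just shown to continue, yields $E(I)v=0$ for a small interval $I=(\lambda-\eps,\lambda+\eps)\subset\cO\cap\R\subset\Delta$ and all such $v$. By the local simplicity assumption~\eqref{eq:localSimple} the vectors $E(\Delta)v$ are dense in $E(\Delta)\cH$; since $I\subset\Delta$ gives $E(I)=E(I)E(\Delta)$, passing to the limit yields $E(I)=0$ on $E(\Delta)\cH$, while $E(I)$ vanishes on $E(\R\setminus\Delta)\cH$ trivially. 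Hence $E(I)=0$, so $\lambda\in\rho(A_0)$, completing (i).

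Part (ii) and the final sentence follow by combining (i) with the eigenvalue criterion. Under~\eqref{eq:localSimple}, Lemma~\ref{simplelemma}~(iv) gives $\Delta\cap\sigma_{\rm p}(S)=\emptyset$, so for $\lambda\in\Delta$ Corollary~\ref{thm:eigen} yields $\lambda\in\sigma_{\rm p}(A_0)$ if and only if $\textup{s-}\lim_{\eta\searrow0}i\eta M(\lambda+i\eta)\neq0$. For a selfadjoint operator one has $\sigma_{\rm c}(A_0)=\sigma(A_0)\setminus\sigma_{\rm p}(A_0)$, so $\lambda\in\sigma_{\rm c}(A_0)$ exactly when $\lambda\in\sigma(A_0)$ and $\lambda\notin\sigma_{\rm p}(A_0)$; by (i) and the previous sentence this is equivalent to $M$ not continuing analytically into $\lambda$ together with $\textup{s-}\lim_{\eta\searrow0}i\eta M(\lambda+i\eta)=0$, which is precisely (ii). Finally, if $S$ is simple then Lemma~\ref{simplelemma}~(i) makes~\eqref{eq:localSimple} hold for every open interval, so (i) and (ii) hold at every $\lambda\in\R$. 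I expect the main obstacle to be the converse of (i): transferring analytic continuation of the scalar resolvent matrix elements into absence of spectral mass through the Cauchy transform, and then upgrading this from the generating vectors $\gamma(\nu)g$ to all of $E(\Delta)\cH$ by local simplicity, while choosing $\cO$ small enough that the denominators in Lemma~\ref{lem:gammaWeylProp}~(v) never vanish and the interval $I$ stays inside $\Delta$.
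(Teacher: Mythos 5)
Your proof is correct and follows essentially the same route as the paper: both deduce from Lemma~\ref{lem:gammaWeylProp}~(v) that the resolvent matrix elements $z\mapsto(\gamma(\mu)^*(A_0-z)^{-1}\gamma(\nu)g,h)$ continue analytically across $\lambda$, kill the local spectral mass by Stieltjes inversion (the paper phrases this via Stone's formula applied to $\gamma(\nu)^*E((a,b))\gamma(\nu)$ rather than via Cauchy transforms of diagonal elements of linear combinations), conclude $E(I)=0$ from the local simplicity condition and Lemma~\ref{simplelemma}~(ii), and obtain (ii) and the final sentence from (i), Corollary~\ref{thm:eigen}, and Lemma~\ref{simplelemma}~(iv) and~(i). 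One minor slip: in Lemma~\ref{lem:gammaWeylProp}~(v) the second and third summands are not independent of $z$, but since $\nu,\overline\mu\notin\R$ they are analytic and their denominators are bounded away from zero near the real axis, so your conclusion is unaffected.
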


\begin{proof}
(i) Recall first that by Lemma~\ref{lem:gammaWeylProp}~(iv) the function $\lambda\mapsto M (\lambda) 
g$ is analytic on $\rho (A_0)$ for each $g \in \ran \Gamma_0$, which proves the implication $(\Rightarrow)$. In order 
to verify the implication $(\Leftarrow)$ in (i), let us assume that $M$ can be 
continued analytically to some $\lambda \in \Delta$, that is, there 
exists an open neighborhood $\cO$ of $\lambda$ in $\C$ with $\cO \cap \R 
\subset \Delta$ such that $\zeta\mapsto M (\zeta) g$ can be continued analytically to 
$\cO$ for each $g \in \ran \Gamma_0$. Choose $a, b \in \R$ with $\lambda 
\in (a, b)$, $[a, b] \subset \cO$, and $a, b \notin \sigma_{\rm p} (A_0)$. The spectral projection $E ( (a, 
b) )$ of $A_0$ corresponding to the interval $(a, b)$ is given by 
Stone's formula
\begin{align}\label{stone}
  E ( (a, b) ) = \textup{s-}\hspace{-0.8mm}\lim_{\delta \searrow 0} 
\frac{1}{2 \pi i} \int_a^b \left( (A_0 - (t + i \delta) )^{-1} - (A_0 - 
(t - i \delta) )^{-1} \right) d t,
\end{align}
where the integral on the right-hand side is understood in the strong sense. Using the 
identity in Lemma~\ref{lem:gammaWeylProp}~(v) and~\eqref{stone} a straight forward calculation leads to
\begin{equation*}
 \begin{split}
  \left\| E ( (a,b) ) \gamma (\nu) g \right\|^2 & = \bigl(\gamma(\nu)^* E ( (a,b) ) \gamma (\nu) g,  g\bigr) \\  
 & = \lim_{\delta \searrow 0} 
\frac{1}{2 \pi i} \int_a^b \Bigl( \bigl(\gamma(\nu)^*(A_0 - (t + i \delta) )^{-1}\gamma(\nu)g,g\bigr )  \\ 
& \qquad \qquad \qquad \qquad - \bigl(\gamma(\nu)^* (A_0 - 
(t - i \delta) )^{-1}\gamma(\nu)g,g\bigr) \Bigr) d t=0
 \end{split}
\end{equation*}
for all $g \in \ran \Gamma_0$ and all $\nu \in \C \setminus \R$, since $\zeta\mapsto ( M (\zeta) g, g)$ admits an analytic 
continuation into $\cO$ for all $g \in \ran \Gamma_0$. 
Thus the assumption~\eqref{eq:localSimple} and $[a, b] \subset \Delta$ together with Lemma~\ref{simplelemma}~(ii) imply 
$E ( (a, b) ) = 0$. In particular, $\lambda \in \rho (A_0)$.

(ii) According to Lemma~\ref{simplelemma}~(iv) the condition~\eqref{eq:localSimple} implies that $S$ 
does not have eigenvalues in $\Delta$.  Hence item~(ii) follows immediately from item~(i) and Corollary~\ref{thm:eigen}.

If $S$ is simple then by Lemma~\ref{simplelemma}~(i) the assumption \eqref{eq:localSimple} is satisfied for $\Delta = 
\R$. Hence~(i) and~(ii) hold for all $\lambda \in 
\R$.
\end{proof}

Now we return to the characterization of eigenvalues. We formulate a 
sufficient condition under which the Weyl function is able to 
distinguish between isolated and embedded eigenvalues.

\begin{proposition}\label{prop:isolatedEV}
Let Assumption~\ref{ass} be satisfied and let 
$\Delta \subset \R$ be an open interval. Assume that the condition~\eqref{eq:localSimple} is satisfied and 
let $\lambda \in \Delta$. Then all assertions of Corollary~\ref{thm:eigen} hold for $\lambda$. Moreover, $\lambda$ is 
an isolated eigenvalue of $A_0$ if and only if $\lambda$ is a pole in 
the strong sense of $M$. In this case $R_\lambda M$ is the 
residue of $M$ in the strong sense at $\lambda$; 
cf.~Remark~\ref{rem:residue}.
\end{proposition}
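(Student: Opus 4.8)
The plan is to reduce everything to results already established in Section~\ref{sec:abstr}. First I would observe that Corollary~\ref{thm:eigen} was proved only under the hypothesis $\lambda\notin\sigma_{\rm p}(S)$, so the very first task is to check that this hypothesis is automatic here. Since the local simplicity condition~\eqref{eq:localSimple} is assumed on the open interval $\Delta$ containing $\lambda$, Lemma~\ref{simplelemma}~(iv) yields $\Delta\cap\sigma_{\rm p}(S)=\emptyset$, hence $\lambda\notin\sigma_{\rm p}(S)$. Therefore all assertions of Corollary~\ref{thm:eigen}, namely the eigenvalue criterion $R_\lambda M\neq 0$ and the bijectivity of $\tau$, hold verbatim for this $\lambda$, settling the first claim.

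For the equivalence in the second claim I would argue both directions through Theorem~\ref{thm:specTotal}~(i), which characterizes $\rho(A_0)$ in $\Delta$ via analytic continuability of $M$. For the forward direction, assume $\lambda$ is an isolated eigenvalue of $A_0$. Using openness of $\Delta$ together with the isolation of $\lambda$ in $\sigma(A_0)$, I would pick an open neighborhood $\cO\subset\C$ of $\lambda$ with $\cO\cap\R\subset\Delta$ and $\cO\setminus\{\lambda\}\subset\rho(A_0)$. By Lemma~\ref{lem:gammaWeylProp}~(iv) the maps $\zeta\mapsto M(\zeta)g$, $g\in\ran\Gamma_0$, are holomorphic on $\rho(A_0)$, so $\lambda$ is at worst an isolated singularity of $M$ in the strong sense, and the same lemma guarantees any such singularity is a first-order pole, excluding essential singularities. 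It remains to rule out that the singularity is removable: were $M$ analytically continuable into $\lambda$, Theorem~\ref{thm:specTotal}~(i) would force $\lambda\in\rho(A_0)$, contradicting that $\lambda$ is an eigenvalue. Hence $\lambda$ is a genuine strong pole.

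For the converse, assume $\lambda$ is a pole of $M$ in the strong sense, so that $M$ is strongly holomorphic on some punctured neighborhood $\cO\setminus\{\lambda\}$; shrinking $\cO$ I may take $\cO\cap\R\subset\Delta$. Then every $\mu\in(\cO\cap\R)\setminus\{\lambda\}$ is a point into which $M$ continues analytically, so $\mu\in\rho(A_0)$ by Theorem~\ref{thm:specTotal}~(i); thus $\lambda$ is isolated in $\sigma(A_0)$. That $\lambda$ is in fact an eigenvalue follows because a pole has nonzero residue, whence $R_\lambda M\neq 0$ by Remark~\ref{rem:residue}, and then Corollary~\ref{thm:eigen} gives $\lambda\in\sigma_{\rm p}(A_0)$. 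Combining these, $\lambda$ is an isolated eigenvalue. Finally, the residue statement is then immediate: once $\lambda$ is known to be isolated it is an isolated singularity of $M$, and Remark~\ref{rem:residue} identifies $R_\lambda M$ with the strong residue $\Res_\lambda M$.

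The arguments are short precisely because of the scaffolding already in place; the one delicate point is the passage between \emph{isolated point of $\sigma(A_0)$} and \emph{isolated singularity of $M$}. The main thing to be careful about is to shrink the neighborhood so that $\cO\cap\R\subset\Delta$ before invoking Theorem~\ref{thm:specTotal}, and to keep all holomorphy and pole assertions at the level of the $\cG$-valued maps $\zeta\mapsto M(\zeta)g$, $g\in\ran\Gamma_0$, rather than at the operator-norm level, so that Lemma~\ref{lem:gammaWeylProp}~(iv) applies to exclude essential singularities.
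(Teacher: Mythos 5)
Your argument is correct and follows essentially the same route as the paper: Lemma~\ref{simplelemma}~(iv) to dispose of the hypothesis $\lambda\notin\sigma_{\rm p}(S)$, then Lemma~\ref{lem:gammaWeylProp}~(iv), Corollary~\ref{thm:eigen}, and Theorem~\ref{thm:specTotal}~(i) for the pole characterization. The only (harmless) difference is that in the forward direction the paper rules out a removable singularity by observing $R_\lambda M\neq 0$ directly from Corollary~\ref{thm:eigen}, whereas you invoke Theorem~\ref{thm:specTotal}~(i); both are valid.
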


\begin{proof}
Let $\lambda \in \R$ and let $\Delta \subset \R$ be an open interval with $\lambda 
\in \Delta$ such that~\eqref{eq:localSimple} holds. Then $\lambda\not\in\sigma_{\rm p} (S)$ by Lemma~\ref{simplelemma}~(iv)
and hence the assertions in Corollary~\ref{thm:eigen} hold for $\lambda$.
Moreover, if $\lambda$ is an isolated eigenvalue of $A_0$ then by 
Lemma~\ref{lem:gammaWeylProp}~(iv) there exists an open neighborhood $\cO$ of 
$\lambda$ such that $\zeta\mapsto M (\zeta) g$ is holomorphic on $\cO \setminus \{ \lambda \}$ for all $g \in \ran \Gamma_0$. 
From Corollary~\ref{thm:eigen} we conclude that there exists $g \in \ran \Gamma_0$ such that
\begin{align}\label{eq:pole}
\lim_{\eta \searrow 0} i \eta M (\lambda + i \eta) g \neq 0.
\end{align}
Hence Lemma~\ref{lem:gammaWeylProp}~(iv) implies that $M$ has a pole of first order in the strong sense at 
$\lambda$. Conversely, if $M$ has a pole (of first order) in the strong sense at 
$\lambda$ then there exists $g \in \ran \Gamma_0$ such that~\eqref{eq:pole} 
holds. According to Lemma~\ref{lem:gammaWeylProp}~(iv) the order of the pole is one and, 
hence,
\begin{align*}
  \lim_{\eta \searrow 0} i \eta M (\lambda + i \eta) g = (\Res_\lambda M) g \neq \{ 0 \}
\end{align*}
for all $g \in 
\ran \Gamma_0$. It follows with the help of Corollary~\ref{thm:eigen} that $\lambda$ is an eigenvalue of $A_0$. Moreover, Theorem~\ref{thm:specTotal}~(i) implies that 
there exists an open neighborhood $\cO$ of $\lambda$ in $\C$ such that $\cO 
\setminus \{\lambda\} \subset \rho (A_0)$. Hence $\lambda$ 
is isolated in the spectrum of $A_0$. This completes the proof.
\end{proof}

Next we discuss the relation of the function $M$ to the absolutely continuous and singular continuous spectrum of $A_0$. In the special case of 
ordinary boundary triples and $\Delta=\dR$ the following results reduce to those in \cite{BMN02}.
For our purposes a localized version and an extension to quasi boundary triples is necessary. The proofs presented here
are somewhat more direct than those in \cite{BMN02}; in particular, the integral
representation of Nevanlinna functions and the corresponding measures are avoided. 

In the following for a finite 
Borel measure $\mu$ on $\R$ we denote the set of
all growth points of $\mu$ by $\supp \mu$, that is,
\begin{align*}
  \supp \mu = \big\{ x \in \R : \mu ( (x - \eps, x + \eps) ) > 
0~\text{for~all}~\eps > 0 \big\}.
\end{align*}
Note that $\supp \mu$ is closed with $\mu (\R \setminus \supp \mu) = 0$ and that $\supp \mu$ is minimal with this property, 
that is, each closed set $S \subset \R$ with $\mu (\R \setminus S) = 0$ satisfies $\supp \mu \subset S$.
Moreover, for a Borel set $\chi \subset \R$ we define the {\em 
absolutely continuous closure} (also called {\em essential closure}) by
\begin{align*}
  \clac (\chi) := \big\{ x \in \R : \left|(x - \eps, x + \eps) \cap \chi 
\right| > 0 ~\text{for~all}~\eps > 0 \big\},
\end{align*}
where $| \cdot |$ denotes the Lebesgue measure, and the {\em continuous closure} by
\begin{align}\label{eq:clc}
  \clc (\chi) := \big\{ x \in \R : (x - \eps, x + \eps) \cap \chi~\text{is~not~countable~for~all}~\eps > 0 \big\}.
\end{align}
Observe that $\clac (\chi)$ and $\clc (\chi)$ both are closed and that $\clac (\chi) \subset \clc (\chi)  \subset \overline\chi$ holds, 
but in general the converse inclusions are not true. In fact, $\clac (\chi)=\emptyset$ if and only if $|\chi|=0$, 
and $\clc (\chi) = \emptyset$ if and only if $\chi$ is countable.

The following lemma can partly be found in, e.g., the 
monographs~\cite{D74} or~\cite{T09}.

\begin{lemma}\label{mesLem}
Let $\mu$ be a finite Borel measure on $\R$ and denote by $F$ its Borel 
transform,
\begin{align*}
  F (\lambda) = \int_{\R} \frac{1}{t - \lambda} d \mu (t), \quad \lambda 
\in \C \setminus \R.
\end{align*}
Then the limit $\Imag F (x + i0) = \lim_{y \searrow 0} \Imag F (x + i y)$ 
exists and is finite for Lebesgue almost all $x \in \R$. Let $\mu_{\rm 
ac}$ and $\mu_{\rm s}$ be the absolutely continuous and singular part, 
respectively, of $\mu$ in the Lebesgue decomposition $\mu = \mu_{\rm ac} 
+ \mu_{\rm s}$, and decompose $\mu_{\rm s}$ into the singular continuous 
part $\mu_{\rm sc}$ and the pure point part. Then the following 
assertions hold.
\begin{enumerate}
  \item $\supp \mu_{\rm ac} = \clac ( \{ x \in \R : 0 < \Imag F (x + i 
0) < +\infty \} )$.
  \item $\supp \mu_{\rm s} \subset \overline{\{ x \in \R : \Imag F (x + i 0) = + 
\infty\}}$.
  \item $\supp \mu_{\rm sc} \subset \clc (\{ x \in \R : \Imag F (x + i 0) = + 
\infty, \lim_{y \searrow 0} y F ( x + i y) = 0 \})$.
\end{enumerate}
\end{lemma}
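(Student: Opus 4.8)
The plan is to reduce all three assertions to the Poisson representation of $\Imag F$ together with classical differentiation theory of measures. First I would record that for $\lambda = x + iy$ one has
\[
 \Imag F(x+iy) = \int_\R \frac{y}{(t-x)^2 + y^2}\,d\mu(t),
\]
so that $\tfrac1\pi\Imag F(x+iy)$ is the Poisson integral of $\mu$ at height $y$. By Fatou's theorem for Poisson integrals of finite measures, this radial limit exists for Lebesgue almost every $x$ and equals the symmetric derivative $D\mu(x) = \lim_{\eps\searrow0}\mu((x-\eps,x+\eps))/(2\eps)$, which in turn equals the density $f := d\mu_{\rm ac}/dx$ almost everywhere by the Lebesgue differentiation theorem. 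Since $f \in L^1(\R)$ is finite a.e., this already yields the first assertion (existence of a finite limit $\Imag F(x+i0)$ a.e.) and the pointwise identity $\Imag F(x+i0) = \pi f(x)$ for a.e.\ $x$. Assertion (i) is then essentially a reformulation: by the definitions of $\supp$ and $\clac$ one has $\supp\mu_{\rm ac} = \clac(\{f>0\})$, because $\mu_{\rm ac}((x-\eps,x+\eps))>0$ for all $\eps$ iff $|(x-\eps,x+\eps)\cap\{f>0\}|>0$ for all $\eps$. Since $\Imag F(x+i0)=\pi f(x)$ a.e.\ and $f<\infty$ a.e., the sets $\{0<\Imag F(x+i0)<+\infty\}$ and $\{f>0\}$ differ only by a Lebesgue null set, and $\clac$ is insensitive to such modifications; hence the two essential closures coincide.

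For (ii) the key extra input is that, because $\mu_{\rm s}$ is singular with respect to Lebesgue measure, the symmetric derivative satisfies $D\mu_{\rm s}(x) = +\infty$ for $\mu_{\rm s}$-a.e.\ $x$ (a standard Vitali/Besicovitch covering comparison). Combining this with the elementary lower bound obtained by restricting the Poisson integral to $|t-x|<y$,
\[
 \Imag F(x+iy) \ge \frac{1}{2y}\,\mu\big((x-y,x+y)\big) \ge \frac{1}{2y}\,\mu_{\rm s}\big((x-y,x+y)\big),
\]
forces $\Imag F(x+i0) = +\infty$ for $\mu_{\rm s}$-a.e.\ $x$. Thus $\mu_{\rm s}$ is carried by the set $G := \{x : \Imag F(x+i0) = +\infty\}$, i.e.\ $\mu_{\rm s}(\R\setminus\overline G) = 0$; since $\overline G$ is closed, the minimality of the support (noted before the lemma) gives $\supp\mu_{\rm s} \subset \overline G$.

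For (iii) I would first identify the meaning of the second defining condition: a dominated-convergence computation gives $\lim_{y\searrow0} yF(x+iy) = i\,\mu(\{x\})$, since $\int y(t-x)/((t-x)^2+y^2)\,d\mu(t) \to 0$ while $\int y^2/((t-x)^2+y^2)\,d\mu(t) \to \mu(\{x\})$. Hence $\lim_{y\searrow0} yF(x+iy) = 0$ selects exactly the points that are not atoms of $\mu$. Now $\mu_{\rm sc}\le\mu_{\rm s}$, so $\mu_{\rm sc}$ is carried by $G$ by part (ii), and being continuous it assigns zero mass to the countable set $C$ of atoms of $\mu$; therefore $\mu_{\rm sc}$ is carried by $H := G\setminus C = \{x : \Imag F(x+i0) = +\infty,\ \lim_{y\searrow0} yF(x+iy) = 0\}$. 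Finally, if $x\notin\clc(H)$ then some interval $(x-\eps,x+\eps)$ meets $H$ in a countable set only, on which the non-atomic measure $\mu_{\rm sc}$ vanishes; consequently $\mu_{\rm sc}((x-\eps,x+\eps)) = 0$ and $x\notin\supp\mu_{\rm sc}$. This establishes $\supp\mu_{\rm sc} \subset \clc(H)$.

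The genuinely nontrivial ingredients are the two facts from differentiation theory---the a.e.\ identification of the radial Poisson limit with the symmetric derivative, and the divergence $D\mu_{\rm s}(x) = +\infty$ holding $\mu_{\rm s}$-a.e.---which are classical and available in the cited monographs; the remaining work is the bookkeeping of carrier sets and the two explicit limit evaluations $\Imag F(x+i0) = \pi f(x)$ and $\lim_{y\searrow0} yF(x+iy) = i\mu(\{x\})$. The subtlest point to get right is that one needs the symmetric derivative of $\mu_{\rm s}$ to diverge as a \emph{genuine} limit rather than only along a subsequence, so that the lower bound truly yields $\Imag F(x+i0) = +\infty$ for the defining set $G$ and not merely a divergent $\limsup$.
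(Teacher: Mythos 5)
Your proof is correct and follows essentially the same route as the paper: part (iii) is argued identically (the dominated-convergence evaluation $\lim_{y\searrow 0} yF(x+iy)=i\mu(\{x\})$ followed by the countable-intersection argument for $\clc$), while for the a.e.\ existence of the boundary limit and for parts (i) and (ii) you supply, via Fatou's theorem for Poisson integrals and the $\mu_{\rm s}$-a.e.\ divergence of the symmetric derivative of a singular measure, exactly the classical facts that the paper imports wholesale from \cite[Lemma~3.14 and Theorem~3.23]{T09}. The subtlety you flag at the end---that $D\mu_{\rm s}(x)=+\infty$ must hold as a genuine limit for $\mu_{\rm s}$-a.e.\ $x$, so that the Poisson-kernel lower bound forces $\Imag F(x+i0)=+\infty$ rather than merely a divergent $\limsup$---is indeed the standard form of that differentiation theorem, so no gap remains.
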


\begin{proof}
From~\cite[Lemma~3.14 and Theorem~3.23]{T09} it follows immediately that assertion~(i) is true, that the limit $\Imag F (x + i 0)$ exists and is finite for Lebesgue almost all $x \in \R$, and that
\begin{align}\label{eq:singSupp}
 \mu_{\rm s} \big( \R \setminus \{ x \in \R : \Imag F (x + i 0) = + \infty \} \big) = 0,
\end{align}
which implies~(ii). In order to verify (iii) note first that $\lim_{y \searrow 0} y F (x + i y) = i \mu (\{ x \} )$ holds for all $x \in \R$ since
\begin{align*}
 \big| y F (x + i y) - i \mu ( \{x\} ) \big| \leq \int_{\R} \left| \frac{y}{t - (x + i y)} - i \mathbbm{1}_{\{x\}} (t) \right| d \mu (t) \to 0, \quad y \searrow 0.
\end{align*}
In particular, $\mu ( \{x\} ) \neq 0$ if and only if $\lim_{y \searrow 0} y F (x + i y) \neq 0$. Hence it follows from~\eqref{eq:singSupp} and the definition of $\mu_{\rm sc}$ that
\begin{align}\label{eq:SCmeszero}
 \mu_{\rm sc} ( \R \setminus M_{\rm sc} ) = 0,
\end{align}
where 
\begin{align*}
 M_{\rm sc} := \Bigl\{ x \in \R : \Imag F (x + i 0) = + \infty, \lim_{y \searrow 0} y F (x + i y) = 0 \Bigr\}.
\end{align*}
For $x \in \R \setminus \clc (M_{\rm sc})$ by definition there exists $\eps > 0$ such that $(x - \eps, x + \eps) \cap M_{\rm sc}$ is countable; thus $\mu_{\rm sc} ((x - \eps, x + \eps) \cap M_{\rm sc}) = 0$. With the help of~\eqref{eq:SCmeszero} it follows
\begin{align*}
 \mu_{\rm sc} ( (x - \eps, x + \eps) ) \leq \mu_{\rm sc} ( (x - \eps, x + \eps) \cap M_{\rm sc} ) + \mu_{\rm sc} (\R \setminus M_{\rm sc}) = 0,
\end{align*}
that is, $x \notin \supp \mu_{\rm sc}$.
\end{proof}

The absolutely continuous spectrum of a selfadjoint operator in some interval~$\Delta$ can be 
characterized in the following way.

\begin{theorem}\label{thm:ACtheorem}
Let Assumption~\ref{ass} be satisfied and let 
$\Delta \subset \R$ be an open interval such that the condition
\begin{align}\label{eq:localSimple00}
  E (\delta) \cH = \clsp \bigl\{ E (\delta) \gamma (\nu) g : \nu \in \C \setminus \R,\, g 
\in \ran \Gamma_0 \bigr\}
\end{align}
is satisfied for each open interval $\delta \subset \Delta$ with $\delta \cap \sigma_{\rm p} (S) = \emptyset$. Then the absolutely continuous spectrum of $A_0$ in $\Delta$ is 
given by
  \begin{align}\label{eq:ACidentity}
   \overline{\sigma_{\rm ac} (A_0) \cap \Delta} = \overline{ \bigcup_{g 
\in \ran \Gamma_0} \clac \big( \big\{x \in \Delta : 0 < \Imag (M (x + i 
0) g, g) <
+\infty \big\} \big) }.
  \end{align}
If $S$ is simple then~\eqref{eq:ACidentity} holds for each open interval $\Delta$, including the case $\Delta = \R$. 
\end{theorem}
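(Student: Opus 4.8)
The plan is to reduce the assertion to the scalar spectral measures of the vectors $\gamma(\nu)g$ and to read off their absolutely continuous parts from boundary values of the Weyl function. For fixed $\nu\in\C\setminus\R$ and $g\in\ran\Gamma_0$ I set $h=\gamma(\nu)g$ and let $\mu_h=(E(\cdot)h,h)$ be the associated scalar spectral measure, a finite Borel measure on $\R$. Its Borel transform is $F_h(\lambda)=((A_0-\lambda)^{-1}h,h)=(\gamma(\nu)^*(A_0-\lambda)^{-1}\gamma(\nu)g,g)$, and Lemma~\ref{lem:gammaWeylProp}~(v), applied with $\mu=\nu$, rewrites this as
\[
 F_h(\lambda)=\frac{(M(\lambda)g,g)}{(\lambda-\nu)(\lambda-\overline\nu)}+\frac{(M(\overline\nu)g,g)}{(\lambda-\overline\nu)(\nu-\overline\nu)}+\frac{(M(\nu)g,g)}{(\nu-\lambda)(\nu-\overline\nu)}.
\]

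First I would exploit that $M(\overline\nu)\subset M(\nu)^*$ by Lemma~\ref{lem:gammaWeylProp}~(iii), so that $(M(\overline\nu)g,g)=\overline{(M(\nu)g,g)}$; with this, a short inspection shows that at every real argument $\lambda=x$ the last two summands are complex conjugates of one another, hence their sum is real. Since moreover $(x-\nu)(x-\overline\nu)=|x-\nu|^2>0$, taking imaginary parts and passing to the boundary yields $\Imag F_h(x+i0)=|x-\nu|^{-2}\,\Imag(M(x+i0)g,g)$ for almost every $x$ (boundary values exist a.e. because $(M(\cdot)g,g)$ is a Nevanlinna function). Feeding this into Lemma~\ref{mesLem}~(i), applied to the restriction $\mu_h|_\Delta$ whose a.c.\ density is proportional to $\Imag F_h(x+i0)\,\mathbbm{1}_\Delta(x)$, I obtain $\supp\bigl((\mu_h|_\Delta)_{\rm ac}\bigr)=\clac\bigl(\{x\in\Delta:0<\Imag(M(x+i0)g,g)<+\infty\}\bigr)$, which is remarkably independent of $\nu$.

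Second, I would invoke the standard description of the absolutely continuous spectrum through a total family of vectors: if a family is total in a reducing subspace, then the a.c.\ spectrum of the restricted operator is the closure of the union of the essential supports of the a.c.\ parts of the corresponding scalar spectral measures. Applying this to $A_0$ on $E(\Delta)\cH$ with the family $\{E(\Delta)\gamma(\nu)g\}$, whose totality is precisely the local simplicity~\eqref{eq:localSimple00}, and combining it with the support formula above, produces exactly~\eqref{eq:ACidentity}. I note that the inclusion $\supset$ in~\eqref{eq:ACidentity} is in fact unconditional, since each $\supp((\mu_h|_\Delta)_{\rm ac})$ lies in $\sigma_{\rm ac}(A_0)\cap\overline\Delta$ regardless of simplicity; the content is the reverse inclusion, which is where totality enters.

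The hard part is that~\eqref{eq:localSimple00} is assumed only on open subintervals $\delta$ avoiding $\sigma_{\rm p}(S)$, not on $\Delta$ itself. To bridge this I would use that $\sigma_{\rm p}(S)$ is at most countable (eigenspaces for distinct eigenvalues are mutually orthogonal in the separable space $\cH$), write the open set $\Delta\setminus\overline{\sigma_{\rm p}(S)}$ as a disjoint union of open intervals $\delta_j$, each of which avoids $\sigma_{\rm p}(S)$ so that~\eqref{eq:localSimple00} holds on it, and then glue by Lemma~\ref{simplelemma}~(iii) to secure local simplicity on $\bigcup_j\delta_j$. The delicate remaining point is to control the contribution of the exceptional set $\Delta\cap\overline{\sigma_{\rm p}(S)}$ to both sides of~\eqref{eq:ACidentity}; here the countability of $\sigma_{\rm p}(S)$ together with the insensitivity of both $\clac(\cdot)$ and $\sigma_{\rm ac}$ to Lebesgue null sets must be used, and this is the step demanding the most care. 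In the simple case, Lemma~\ref{simplelemma}~(i) makes this step vacuous, which is why~\eqref{eq:ACidentity} then holds on every open interval, including $\Delta=\R$.
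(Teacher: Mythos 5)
Your strategy is the same as the paper's: the boundary relation $\Imag F_h(x+i0)=|x-\nu|^{-2}\Imag(M(x+i0)g,g)$ and its insertion into Lemma~\ref{mesLem}~(i) is precisely Step~2 of the paper's proof (the paper derives it from the representation \eqref{eq:Mformula} in Lemma~\ref{lem:gammaWeylProp}~(iv) rather than from item~(v) with $\mu=\nu$, but the computation is equivalent, and your conjugate-pair observation for the two $\lambda$-independent-denominator terms is correct), while your ``total family'' principle is Step~1, which the paper proves directly instead of citing.

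The one genuine gap is exactly the step you flag as delicate, and the mechanism you propose for it does not work as stated. The set you must excise from $\Delta$ in order to apply \eqref{eq:localSimple00} on the resulting open subintervals is $\overline{\sigma_{\rm p}(S)}$, not $\sigma_{\rm p}(S)$: only $\Delta\setminus\overline{\sigma_{\rm p}(S)}$ is open and decomposes into open intervals $\delta_j$ with $\delta_j\cap\sigma_{\rm p}(S)=\emptyset$, to which Lemma~\ref{simplelemma}~(iii) applies. The closure of a countable set need be neither countable nor Lebesgue-null, so ``countability of $\sigma_{\rm p}(S)$ together with insensitivity of $\clac$ and $\sigma_{\rm ac}$ to null sets'' does not control the exceptional set $\Delta\cap\overline{\sigma_{\rm p}(S)}$. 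The paper closes this differently, and in a way that avoids ever claiming totality of $\{E(\Delta)\gamma(\nu)g\}$ in all of $E(\Delta)\cH$: writing $\Delta'=\Delta\setminus\overline{\sigma_{\rm p}(S)}$ and $\cD_\Delta=\{E(\Delta)\gamma(\zeta)g:\zeta\in\C\setminus\R,\ g\in\ran\Gamma_0\}$, it obtains $E(\Delta')\cH\subset\clsp\cD_\Delta$ from the gluing lemma and then uses the chain $P_{\rm ac}E(\Delta)\cH=P_{\rm ac}E(\Delta')\cH\subset\clsp P_{\rm ac}\cD_\Delta\subset P_{\rm ac}E(\Delta)\cH$, i.e.\ totality is needed only in the absolutely continuous part of $E(\Delta)\cH$, which is all that \eqref{eq:ACidentity} requires. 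To finish your argument you must supply the first equality of that chain (equivalently, that the absolutely continuous part of the spectral measure does not charge $\Delta\cap\overline{\sigma_{\rm p}(S)}$) or adopt the paper's formulation; with that in place the rest of your proposal goes through.
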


\begin{proof}
The proof of Theorem~\ref{thm:ACtheorem} consists of two separate steps in which the assertions \eqref{acid1} and \eqref{eq:suppMuAc} below 
will be shown. The identity \eqref{eq:ACidentity} is then an immediate consequence of 
\eqref{acid1} and \eqref{eq:suppMuAc} (note that the right hand side in \eqref{eq:suppMuAc} does not depend on $\zeta\in\C\setminus\R$). We fix some notation first. Let us set
\begin{align}\label{D}
  \cD_\Delta := \bigl\{ E (\Delta) \gamma (\zeta) g : \zeta \in \C \setminus \R ,\, g \in \ran \Gamma_0\bigr\}
\end{align}
and define the measures $\mu_u := (E (\cdot) u, u)$ for $u\in\cH$. Denote by 
$P_{\rm ac}$ the orthogonal projection in $\cH$ onto the absolutely continuous subspace $\cH_{\rm ac}$ of $A_0$.
Observe that
the spectral measure of the absolutely continuous part of $A_0$ is $E(\cdot)P_{\rm ac}$ and that the absolutely continuous 
measures $\mu_{u, \rm ac}$ are given by $\mu_{u, \rm ac}=(E(\cdot)P_{\rm ac}u,P_{\rm ac}u)=\mu_{P_{\rm ac}u}$.

\vskip 0.2cm\noindent
{\bf Step 1.}
In this step the identity
\begin{align}\label{acid1}
   \overline{\sigma_{\rm ac} (A_0) \cap \Delta} = \overline{\bigcup_{u \in \cD_\Delta} \supp \mu_{u, \rm ac}}
\end{align}
will be verified. First of all the open set $\Delta^\prime:=\Delta\backslash\overline{\sigma_{\rm p} (S)}$ is the disjoint union
of open intervals $\delta_j$, $1\leq j\leq N$, $N\in\dN\cup\{\infty\}$, and for each $\delta_j$ we have
\begin{align*}
  E (\delta_j) \cH = \clsp \bigl\{ E (\delta_j) \gamma (\nu) g : \nu \in \C \setminus \R,\, g 
\in \ran \Gamma_0 \bigr\}
\end{align*}
by assumption. With the help of Lemma~\ref{simplelemma}~(iii) we conclude
\begin{align*}
 E (\Delta^\prime) \cH = \clsp \big\{ E (\Delta^\prime) \gamma (\nu) g : \nu \in \C \setminus \R,\, g \in \ran \Gamma_0 \big\}.
\end{align*}
Since $\Delta^\prime\subset\Delta$ it follows immediately that $E (\Delta^\prime) \cH\subset \clsp \cD_\Delta$. Moreover, we have
\begin{align*}
 P_{\rm ac} E (\Delta) \cH = P_{\rm ac} E (\Delta^\prime) \cH \subset  P_{\rm ac}\bigl(\clsp \cD_\Delta\bigr)\subset \clsp P_{\rm ac}\cD_\Delta
 \subset P_{\rm ac} E (\Delta) \cH 
\end{align*}
and therefore
\begin{equation}\label{eq:ohneSchlangeId}
 P_{\rm ac} E (\Delta) \cH=\clsp P_{\rm ac}\cD_\Delta.
\end{equation}

In order to verify~\eqref{acid1}, assume first that $x$ does not belong to the left hand side of \eqref{acid1}, that is, 
$x \notin \overline{\sigma_{\rm ac} (A_0) \cap \Delta}$. Then there exists $\epsilon > 0$ such that
$(x - \epsilon, x + \epsilon) \cap \Delta$ contains no absolutely continuous spectrum of $A_0$. This yields
$$E((x - \epsilon, x + \epsilon) \cap \Delta)P_{\rm ac}=0$$ 
and for $u \in E (\Delta) \cH$ one obtains
\begin{equation*}
 \begin{split}
  \mu_{u, \rm ac} ((x - \epsilon, x + \epsilon)) & = \bigl(E ((x - \epsilon, x + \epsilon))  P_{\rm ac} u,  P_{\rm ac} u\bigr) \\  
  & = \bigl(E ((x - \epsilon, x + \epsilon))  P_{\rm ac} E (\Delta) u,  P_{\rm ac} u\bigr) \\
  & = \bigl(E ((x - \epsilon, x + \epsilon) \cap \Delta) P_{\rm ac} u, P_{\rm ac} u\bigr) \\
  & =0.
\end{split}
\end{equation*}
Therefore $(x - \epsilon, x + \epsilon)\cap \supp \mu_{u, \rm ac}=\emptyset$ for all $u \in E (\Delta) \cH$, in particular, for all $u \in \cD_\Delta$.
Thus
$$
x\not \in\overline{\bigcup_{u \in \cD_\Delta} \supp \mu_{u, \rm ac}} 
$$
and  the inclusion
$\supset$ in~\eqref{acid1} follows. For the converse inclusion assume that $x$ 
does not belong to the right hand side of~\eqref{acid1}. Then there exists $\epsilon > 0$ such that 
$(x - \epsilon, x + \epsilon) \subset \dR \setminus \supp \mu_{u, \rm ac}$ for all $u \in \cD_\Delta$, that is,
\begin{equation*}
 \Vert E ((x - \epsilon, x + \epsilon))  P_{\rm ac} u\Vert^2= \mu_{u, \rm ac} ( (x - \epsilon, x + \epsilon) ) = 0
\end{equation*}
for all $u \in \cD_\Delta$, and hence also for all $u \in \clsp \cD_\Delta$. 
With the help of~\eqref{eq:ohneSchlangeId} it follows 
$$
 E ((x - \epsilon, x + \epsilon)\cap\Delta)  P_{\rm ac} u= E ((x - \epsilon, x + \epsilon))  P_{\rm ac} E(\Delta) u=0
$$
for all $u\in \cH$. This shows that $(x - \epsilon, x + \epsilon) \cap \Delta$ does not contain absolutely continuous spectrum of $A_0$, in particular, 
$x \notin \overline{\sigma_{\rm ac} (A_0) \cap \Delta}$ and the inclusion $\subset$ in~\eqref{acid1} follows.

\vskip 0.2cm\noindent
{\bf Step 2.} In this step we show that the identity
\begin{align}\label{eq:suppMuAc}
\supp \mu_{u, {\rm ac}}  = \clac \bigl( \bigl\{ x \in \Delta : 0 < \Imag 
\bigl(M (x + i 0) g,
g \bigr) < +\infty \bigr\} \bigr)
\end{align}
holds for all $u=E (\Delta) \gamma (\zeta) g \in \cD_\Delta$. Indeed, with the help of the
formula~\eqref{eq:Mformula} we compute
\begin{align}\label{kette}
  \Imag (M & (x + i y) g, g) \nonumber \\
  & = y \| \gamma (\zeta) g \|^2 + \left( |x - \zeta|^2 - y^2 \right) 
\Imag \left( (A_0 - (x + i y) )^{-1} \gamma (\zeta) g, \gamma (\zeta) g 
\right) \nonumber \\
  & \quad  + 2 (x - \Real \zeta) y \Real \left( (A_0 - (x + i y) )^{-1} 
\gamma (\zeta) g, \gamma (\zeta) g \right),
\end{align}
for all $x \in \R$, $y>0$, $g \in \ran \Gamma_0$, and $\zeta \in \C 
\setminus \R$. Moreover, dominated convergence implies that
\begin{align*}
  y \Real \left( (A_0 - (x + i y) )^{-1} \gamma (\zeta) g, \gamma 
(\zeta) g \right) = \int_\R \frac{y (t - x)}{(t - x)^2 + y^2}
d ( E (t) \gamma (\zeta) g, \gamma (\zeta) g )
\end{align*}
converges to zero as $y \searrow 0$. Therefore for $x \in \R$~\eqref{kette} implies
\begin{align}\label{MResId}
  \Imag ( M (x + i 0) g, g) & = |x - \zeta|^2 \Imag \left( (A_0 - (x + i 
0) )^{-1} \gamma (\zeta) g, \gamma (\zeta) g \right),
\end{align}
in the sense that one of the limits exists if and only if the other limit exists, where $+\infty$ is allowed as 
(improper) limit. 

For $u \in \cH$, $x \in \R$, and $y > 0$ the imaginary part of the Borel transform $F_u$ of
the measure $\mu_{u}=(E(\cdot)u,u)$ is given by
\begin{align}\label{eq:Fu}
  \Imag F_{u} (x + i y) & =  \Imag \int_{\R} \frac{1}{t - (x+iy)} d (E(t)u,u) = \Imag\left( (A_0 - (x + i y) )^{-1} u, u \right),
\end{align}
and for $u \in E (\Delta)\cH$ we obtain 
\begin{align*}
 \Imag F_u (x + i 0) = 
 \begin{cases} \Imag \left( (A_0 - (x + i 
0) )^{-1} u, u \right) & \text{if}\,\,\, x \in \Delta, \\
  0 & \text{if}\,\,\, x \notin \overline \Delta,
 \end{cases} 
\end{align*}
in particular, if $u = E (\Delta) \gamma (\zeta) g \in \cD_\Delta$ then 
\begin{align*}
 \Imag F_u (x + i 0) = 
 \begin{cases} \Imag \left( (A_0 - (x + i 
0) )^{-1} \gamma (\zeta) g, \gamma (\zeta) g \right) & \text{if}\,\,\, x \in \Delta, \\
  0 & \text{if}\,\,\, x \notin \overline \Delta.
 \end{cases} 
\end{align*}
Taking into account \eqref{MResId} we then find  
\begin{align}\label{eq:wichtig4}
 \Imag F_u (x + i 0) = 
 \begin{cases} |x - \zeta|^{-2} \Imag (M (x + i 0) g, g) & \text{if}\,\,\, x \in \Delta, \\
  0 & \text{if}\,\,\, x \notin \overline \Delta,
 \end{cases} 
\end{align}
for $u = E (\Delta) \gamma (\zeta) g \in \cD_\Delta$. From Lemma~\ref{mesLem}~(i) we conclude together with \eqref{eq:wichtig4} that
\begin{align*}
\supp \mu_{u, {\rm ac}} & = \clac \bigl( \bigl\{ x \in \Delta : 0 < \Imag F_u (x + i 0) < +\infty \bigr\} \bigr) \nonumber \\
 & = \clac \bigl( \bigl\{ x \in \Delta : 0 < \Imag (M (x + i 0) g, g) < +\infty \bigr\} \bigr)
\end{align*}
holds for $u = E (\Delta) \gamma (\zeta) g \in \cD_\Delta$, which shows \eqref{eq:suppMuAc}.
\end{proof}

Theorem~\ref{thm:ACtheorem} immediately implies the following two corollaries.

\begin{corollary}\label{thm:ACcomplete2}
Let Assumption~\ref{ass} be satisfied and assume 
that~\eqref{eq:localSimple00} holds for each open interval $\delta \subset \R$ such that $\delta \cap \sigma_{\rm p} (S) = \emptyset$. Then
\begin{align*}
  \sigma_{\rm ac} (A_0) = \overline{ \bigcup_{g \in \ran \Gamma_0} \clac 
\big( \big\{x \in \R : 0 < \Imag (M (x + i 0) g, g) <
+\infty \big\} \big) }.
\end{align*}
\end{corollary}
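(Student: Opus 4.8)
The plan is to obtain Corollary~\ref{thm:ACcomplete2} as the special case $\Delta = \R$ of Theorem~\ref{thm:ACtheorem}. First I would note that $\R$ is itself an open interval, admissible in the statement of Theorem~\ref{thm:ACtheorem}; this is confirmed by the final sentence of that theorem, where the case $\Delta = \R$ is explicitly allowed. For this choice the hypothesis of the theorem---that \eqref{eq:localSimple00} hold for every open interval $\delta \subset \Delta$ with $\delta \cap \sigma_{\rm p}(S) = \emptyset$---reduces verbatim to the hypothesis of the corollary, since the requirement $\delta \subset \R$ is no restriction. Thus all assumptions of Theorem~\ref{thm:ACtheorem} are satisfied with $\Delta = \R$.

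Applying the theorem then yields
\[
 \overline{\sigma_{\rm ac}(A_0) \cap \R} = \overline{\bigcup_{g \in \ran \Gamma_0} \clac\bigl(\bigl\{x \in \R : 0 < \Imag(M(x+i0)g,g) < +\infty\bigr\}\bigr)}.
\]
It remains only to simplify the left-hand side. Since the absolutely continuous spectrum is contained in $\R$ we have $\sigma_{\rm ac}(A_0) \cap \R = \sigma_{\rm ac}(A_0)$, and because $\sigma_{\rm ac}(A_0)$ is by definition the spectrum of the selfadjoint restriction of $A_0$ to its absolutely continuous subspace $\cH_{\rm ac}$, it is a closed subset of $\R$; hence the closure on the left may be dropped. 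This gives precisely the asserted identity.

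There is no genuine obstacle here, as the entire substance lies in Theorem~\ref{thm:ACtheorem}. The only two points warranting a word of justification are that $\R$ is permitted as the interval $\Delta$ (confirmed within the theorem itself) and that $\sigma_{\rm ac}(A_0)$ is closed (immediate, being the spectrum of a selfadjoint operator); both are routine, so the corollary follows at once.
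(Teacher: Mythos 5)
Your proof is correct and follows exactly the route the paper intends: the corollary is stated as an immediate consequence of Theorem~\ref{thm:ACtheorem} with $\Delta=\R$, and your two supplementary observations (that $\R$ is an admissible interval and that $\sigma_{\rm ac}(A_0)$, being the spectrum of the selfadjoint restriction of $A_0$ to $\cH_{\rm ac}$, is closed so the outer closure may be dropped) are precisely what is needed to pass from the theorem's conclusion to the stated identity.
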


\begin{corollary}\label{cor:ACDelta}
Let Assumption~\ref{ass} be satisfied and let $\Delta \subset \R$ be an open interval such that the condition~\eqref{eq:localSimple} holds. Then the absolutely continuous spectrum of $A_0$ in $\Delta$ is 
given by
  \begin{align*}
   \overline{\sigma_{\rm ac} (A_0) \cap \Delta} = \overline{ \bigcup_{g 
\in \ran \Gamma_0} \clac \big( \big\{x \in \Delta : 0 < \Imag (M (x + i 
0) g, g) <
+\infty \big\} \big) }.
  \end{align*} 
\end{corollary}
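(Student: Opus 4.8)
The plan is to deduce this directly from Theorem~\ref{thm:ACtheorem}, so the only thing to verify is that the hypothesis~\eqref{eq:localSimple} on the whole interval $\Delta$ implies the (seemingly weaker, interval-by-interval) hypothesis required by that theorem. First I would observe that condition~\eqref{eq:localSimple} is literally condition~\eqref{eq:localSimple'} of Lemma~\ref{simplelemma}, now imposed for the open interval $\Delta$. Invoking Lemma~\ref{simplelemma}~(ii), I obtain that the same spanning identity holds for every Borel subset $\Delta^\prime \subset \Delta$, and in particular for every open subinterval $\delta \subset \Delta$. Hence~\eqref{eq:localSimple00} is satisfied for all open $\delta \subset \Delta$, and \emph{a fortiori} for those $\delta$ with $\delta \cap \sigma_{\rm p}(S) = \emptyset$, which is exactly what the hypothesis of Theorem~\ref{thm:ACtheorem} demands.

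With the hypothesis of Theorem~\ref{thm:ACtheorem} thus verified, I would simply apply that theorem. Its conclusion~\eqref{eq:ACidentity} is verbatim the identity asserted in the corollary, so nothing further is needed and the proof is complete.

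I do not expect any genuine obstacle here: the entire analytic content is already carried by Theorem~\ref{thm:ACtheorem}, and the passage from~\eqref{eq:localSimple} to that theorem's hypothesis is immediate from the monotonicity statement in Lemma~\ref{simplelemma}~(ii). The single point worth emphasizing is that local simplicity on all of $\Delta$ is in fact \emph{stronger} than what Theorem~\ref{thm:ACtheorem} requires, since by Lemma~\ref{simplelemma}~(ii) it forces the spanning property on \emph{every} subinterval rather than only on those avoiding $\sigma_{\rm p}(S)$; this is precisely why the corollary follows at once.
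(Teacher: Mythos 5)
Your proposal is correct and matches the paper's intent exactly: the paper states Corollary~\ref{cor:ACDelta} as an immediate consequence of Theorem~\ref{thm:ACtheorem}, and the only detail to supply is that condition~\eqref{eq:localSimple} on $\Delta$ yields~\eqref{eq:localSimple00} for every open $\delta \subset \Delta$ via Lemma~\ref{simplelemma}~(ii), which is precisely what you do.
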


In the next corollary a necessary and sufficient condition for the absence of absolutely continuous spectrum
is given.

\begin{corollary}\label{cor:ACequiv}
Let Assumption~\ref{ass} be satisfied and let 
$\Delta \subset \R$ be an open interval. Assume that the condition~\eqref{eq:localSimple00} holds for each open interval $\delta \subset \Delta$ with $\delta \cap \sigma_{\rm p} (S) = \emptyset$. Then $\sigma_{\rm ac} (A_0) \cap \Delta = \emptyset$ if and only if $\Imag (M (x + i 0) g, g) = 0$ holds for all $g \in\ran 
\Gamma_0$ and for almost all $x \in \Delta$. 
\end{corollary}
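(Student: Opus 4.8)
The plan is to read the statement directly off the description of the absolutely continuous spectrum provided by Theorem~\ref{thm:ACtheorem}, whose hypotheses coincide exactly with those assumed here. The only analytic input beyond that theorem will be that, for each fixed $g\in\ran\Gamma_0$, the boundary value $\Imag(M(x+i0)g,g)$ exists, is finite and is nonnegative for Lebesgue-almost every $x\in\Delta$; combined with the elementary fact (recorded before the theorem) that $\clac(\chi)=\emptyset$ precisely when $|\chi|=0$, this converts the closure identity \eqref{eq:ACidentity} into the claimed measure-theoretic equivalence.

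Concretely, I would first record via \eqref{eq:ACidentity} that
\[\overline{\sigma_{\rm ac}(A_0)\cap\Delta}=\overline{\bigcup_{g\in\ran\Gamma_0}\clac(A_g)},\qquad A_g:=\bigl\{x\in\Delta: 0<\Imag(M(x+i0)g,g)<+\infty\bigr\}.\]
Since the closure of a set is empty if and only if the set itself is empty, and each $\clac(A_g)$ is contained in the union, the right-hand side vanishes if and only if $\clac(A_g)=\emptyset$ for every $g\in\ran\Gamma_0$, that is, if and only if $|A_g|=0$ for every $g$. As $\sigma_{\rm ac}(A_0)\cap\Delta$ is empty exactly when its closure is, this already reduces the corollary to the equivalence
\[|A_g|=0\ \text{for all } g\in\ran\Gamma_0\iff \Imag(M(x+i0)g,g)=0\ \text{for all } g\in\ran\Gamma_0 \text{ and a.e. } x\in\Delta.\]

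For the implication $(\Leftarrow)$ of this remaining equivalence, note that if $\Imag(M(x+i0)g,g)=0$ for a.e.\ $x\in\Delta$ then $A_g$ is contained in a Lebesgue-null set, so $|A_g|=0$; here no regularity of the boundary values is needed. The substantial direction is $(\Rightarrow)$, and this is where the pointwise information on the boundary values enters. Fixing $g$ and choosing $\zeta\in\C\setminus\R$, I would set $u=E(\Delta)\gamma(\zeta)g\in\cD_\Delta$; then \eqref{eq:wichtig4} gives $\Imag F_u(x+i0)=|x-\zeta|^{-2}\Imag(M(x+i0)g,g)$ for $x\in\Delta$, while Lemma~\ref{mesLem} guarantees that the boundary value $\Imag F_u(x+i0)$ of the Borel transform of the finite positive measure $\mu_u$ exists and is finite for a.e.\ $x$, and, being the limit of the nonnegative quantities $\Imag F_u(x+iy)$, is also nonnegative a.e. Hence $\Imag(M(x+i0)g,g)$ exists, is finite and nonnegative for a.e.\ $x\in\Delta$. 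For such an $x$, if $\Imag(M(x+i0)g,g)\neq0$ then $0<\Imag(M(x+i0)g,g)<+\infty$, i.e.\ $x\in A_g$; as $|A_g|=0$ this forces $\Imag(M(x+i0)g,g)=0$ for a.e.\ $x\in\Delta$, completing the equivalence and hence the proof.

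The only point that requires care, and the one I expect to be the main obstacle, is justifying that the strict-inequality set $A_g$ really captures all of the a.e.-nonvanishing of $\Imag(M(x+i0)g,g)$, which hinges on excluding the value $+\infty$ on a set of positive measure. This is exactly what the finiteness assertion of Lemma~\ref{mesLem}, transported through \eqref{eq:wichtig4}, supplies, so no separate estimate is needed; everything else is bookkeeping on the closure identity \eqref{eq:ACidentity}.
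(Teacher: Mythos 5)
Your proposal is correct and follows essentially the same route as the paper: both arguments combine the identity \eqref{eq:ACidentity} with the observation that $\clac(\chi)=\emptyset$ precisely when $|\chi|=0$, and both invoke Lemma~\ref{mesLem} (through the transfer formula relating $\Imag(M(x+i0)g,g)$ to $\Imag F_u(x+i0)$) to ensure the boundary value exists, is finite and is nonnegative almost everywhere, so that vanishing a.e.\ is equivalent to $|A_g|=0$. The only cosmetic difference is that you work with $u=E(\Delta)\gamma(\zeta)g$ and \eqref{eq:wichtig4} where the paper uses $u=\gamma(\zeta)g$ with \eqref{MResId} and \eqref{eq:Fu}; the substance is identical.
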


\begin{proof}
We make use of the fact that for $g \in \ran \Gamma_0$
\begin{equation}\label{zerozero}
 \clac \big(\left\{ x \in \Delta : 0 < \Imag ( M (x + i 0) g, g) < + \infty \right\} \big)=\emptyset
\end{equation}
if and only if
\begin{equation}\label{zeroset}
 \big|\left\{ x \in \Delta : 0 < \Imag ( M (x + i 0) g, g) < + \infty \right\} \big|=0.
\end{equation}
Assume first that  $\sigma_{\rm ac} (A_0) \cap \Delta = \emptyset$. Then 
\eqref{eq:ACidentity} yields \eqref{zerozero} for all  $g \in \ran \Gamma_0$, and hence \eqref{zeroset} holds for all $g \in \ran \Gamma_0$.
Moreover, for $u = \gamma (\zeta) g$ and $x \in \R$ 
by~\eqref{MResId} and~\eqref{eq:Fu} we have
\begin{align*}
  \Imag (M (x + i 0) g, g)  = |x - \zeta|^2 \Imag F_{u} (x + i 0),
\end{align*}
and by Lemma~\ref{mesLem} this limit exists and is finite for 
Lebesgue almost all $x \in \R$. Hence~\eqref{zeroset} implies $\Imag 
(M (x + i 0) g, g) = 0$ for all $g \in \ran \Gamma_0$ and almost all $x 
\in \Delta$. For the converse implication assume that $\Imag (M (x + i 0) g, g) = 0$ for all $g \in\ran 
\Gamma_0$ and for almost all $x \in \Delta$. Then \eqref{zeroset} and hence also \eqref{zerozero} holds for all $g \in\ran 
\Gamma_0$.
Thus \eqref{eq:ACidentity} yields $\sigma_{\rm ac} (A_0) \cap \Delta = \emptyset$. 
\end{proof}

Let us prove next inclusions for the singular and singular continuous spectra of~$A_0$. Recall the definition of the continuous closure $\clc (\chi)$ of a Borel set $\chi$ in~\eqref{eq:clc}.

\begin{theorem}\label{thm:SCtheorem}
Let Assumption~\ref{ass} be satisfied, and let 
$\Delta \subset \R$ be an open interval. Then 
the following assertions hold.
\begin{enumerate}
 \item If the condition~\eqref{eq:localSimple} holds then the singular spectrum of $A_0$ in $\Delta$ satisfies
\begin{align*}
 \bigl(\sigma_{\rm s} (A_0) \cap \Delta\bigr)\, \subset \overline{ \bigcup_{g 
\in \ran \Gamma_0} \big\{x \in \Delta : \Imag (M (x + i 
0) g, g) = +\infty \big\} }.
\end{align*}
 \item If the condition~\eqref{eq:localSimple00} is satisfied for each open interval $\delta \subset \Delta$ with $\delta \cap \sigma_{\rm p} (S) = \emptyset$ then the singular continuous spectrum of $A_0$ in $\Delta$, $\sigma_{\rm sc} (A_0) \cap \Delta$, is contained in the set
\begin{align*}
  \overline{ \bigcup_{g \in \ran \Gamma_0} \clc \big(\big\{x \in \Delta : \Imag (M (x + i 0) g, g) = +\infty, \lim_{y \searrow 0} y (M (x + i y) g, g) = 0 \big\} \big) }.
\end{align*}
\end{enumerate}
If $S$ is simple then {\rm (i)} and {\rm (ii)} hold for each open interval $\Delta$, including 
the case $\Delta = \R$. 
\end{theorem}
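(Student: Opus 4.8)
The plan is to run the two-step argument of the proof of Theorem~\ref{thm:ACtheorem} twice---once for the singular part, once for the singular continuous part---now invoking Lemma~\ref{mesLem}~(ii) and~(iii) in place of~(i). I keep the notation $\cD_\Delta=\{E(\Delta)\gamma(\zeta)g:\zeta\in\C\setminus\R,\,g\in\ran\Gamma_0\}$, $\mu_u=(E(\cdot)u,u)$, and write $P_{\rm s},P_{\rm sc}$ for the orthogonal projections onto the singular and singular continuous subspaces of $A_0$, so that $\mu_{u,\rm s}=\mu_{P_{\rm s}u}$ and $\mu_{u,\rm sc}=\mu_{P_{\rm sc}u}$. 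For~(i) I would first prove $\sigma_{\rm s}(A_0)\cap\Delta\subset\overline{\bigcup_{u\in\cD_\Delta}\supp\mu_{u,\rm s}}$. Fix $x_0$ in the left-hand side and a neighbourhood $U=(x_0-\eps,x_0+\eps)\subset\Delta$. Under~\eqref{eq:localSimple} the family $\{P_{\rm s}u:u\in\cD_\Delta\}$ is total in $\cH_{\rm s}\cap E(\Delta)\cH=P_{\rm s}E(\Delta)\cH$, since $\cD_\Delta$ is total in $E(\Delta)\cH$ and $P_{\rm s}$ is bounded. As $x_0\in\sigma_{\rm s}(A_0)$ forces $E(U)P_{\rm s}\neq0$ and $U\subset\Delta$ gives $E(U)P_{\rm s}=E(U)P_{\rm s}E(\Delta)$, totality yields $u\in\cD_\Delta$ with $E(U)P_{\rm s}u\neq0$, i.e.\ $\mu_{u,\rm s}(U)=\|E(U)P_{\rm s}u\|^2>0$, so $x_0$ lies in the asserted closure. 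To translate supports, Lemma~\ref{mesLem}~(ii) gives $\supp\mu_{u,\rm s}\subset\overline{\{x:\Imag F_u(x+i0)=+\infty\}}$, and by~\eqref{eq:wichtig4} a point $x\in\Delta$ satisfies $\Imag F_u(x+i0)=+\infty$ exactly when $\Imag(M(x+i0)g,g)=+\infty$; since $\Delta$ is open this gives $\supp\mu_{u,\rm s}\cap\Delta\subset\overline{\{x\in\Delta:\Imag(M(x+i0)g,g)=+\infty\}}$, and combining the two inclusions proves~(i).

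For~(ii) the scheme is identical with $P_{\rm sc}$ in place of $P_{\rm s}$, but only~\eqref{eq:localSimple00} on subintervals $\delta$ with $\delta\cap\sigma_{\rm p}(S)=\emptyset$ is available. I would therefore reproduce Step~1 of the proof of Theorem~\ref{thm:ACtheorem}: pass to $\Delta'=\Delta\setminus\overline{\sigma_{\rm p}(S)}$, decompose it into its component intervals, apply Lemma~\ref{simplelemma}~(iii) to get $E(\Delta')\cH=\clsp\{E(\Delta')\gamma(\nu)g\}\subset\clsp\cD_\Delta$, and use that a singular continuous measure annihilates the countable set $\sigma_{\rm p}(S)$ to conclude $\clsp P_{\rm sc}\cD_\Delta=P_{\rm sc}E(\Delta)\cH$; this yields $\sigma_{\rm sc}(A_0)\cap\Delta\subset\overline{\bigcup_{u\in\cD_\Delta}\supp\mu_{u,\rm sc}}$ exactly as above. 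The genuinely new point is to match the second condition in Lemma~\ref{mesLem}~(iii). Writing $u=E(\Delta)\gamma(\zeta)g$, so that $\mu_u$ is the restriction to $\Delta$ of $\mu_{\gamma(\zeta)g}$ and hence $\lim_{y\searrow0}yF_u(x+iy)=\lim_{y\searrow0}yF_{\gamma(\zeta)g}(x+iy)$ for $x\in\Delta$, I would substitute the identity of Lemma~\ref{lem:gammaWeylProp}~(v) with $\mu=\nu=\zeta$ into $F_{\gamma(\zeta)g}(\lambda)=(\gamma(\zeta)^*(A_0-\lambda)^{-1}\gamma(\zeta)g,g)$; multiplying by $y$ and letting $y\searrow0$, the two terms with fixed denominators vanish and the remaining term gives $\lim_{y\searrow0}yF_u(x+iy)=|x-\zeta|^{-2}\lim_{y\searrow0}y(M(x+iy)g,g)$ for $x\in\Delta$. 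Together with~\eqref{eq:wichtig4} this identifies, on $\Delta$, the set in Lemma~\ref{mesLem}~(iii) with $\{x\in\Delta:\Imag(M(x+i0)g,g)=+\infty,\ \lim_{y\searrow0}y(M(x+iy)g,g)=0\}$, and the $\clc$-localization inside the open interval $\Delta$ finishes~(ii). The final assertion follows since a simple $S$ satisfies~\eqref{eq:localSimple} on all of $\R$ by Lemma~\ref{simplelemma}~(i).

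The step I expect to be most delicate is the localization (the analog of Step~1): passing from $E(U)P_{\rm sc}\neq0$ to a witness inside the spanning family $\cD_\Delta$ forces a repetition of the Stone-formula and resolvent manipulations used for Theorem~\ref{thm:ACtheorem} and requires controlling the exceptional set $\Delta\setminus\Delta'\subset\overline{\sigma_{\rm p}(S)}$. Since $\sigma_{\rm p}(S)$ is at most countable---the eigenspaces of the symmetric operator $S$ are mutually orthogonal in the separable space $\cH$---and singular continuous measures carry no mass on countable sets, this set is harmless for $P_{\rm sc}$; nonetheless its interplay with the several closures on the right-hand sides is where the bookkeeping has to be done with care.
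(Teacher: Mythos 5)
Your proposal is correct and follows essentially the same route as the paper's proof: reduce to the support inclusion $\sigma_{i}(A_0)\cap\Delta\subset\overline{\bigcup_{u\in\cD_\Delta}\supp\mu_{u,i}}$ for $i={\rm s},{\rm sc}$ (which the paper imports wholesale from Step~1 of the proof of Theorem~\ref{thm:ACtheorem}, exactly as you do, including the same implicit treatment of the exceptional set $\Delta\cap\overline{\sigma_{\rm p}(S)}$), and then translate $\supp\mu_{u,\rm s}$ and $\supp\mu_{u,\rm sc}$ into conditions on $M$ via the Borel-transform identities \eqref{eq:wichtig1}--\eqref{eq:wichtig2} and Lemma~\ref{mesLem}~(ii),(iii). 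The only departures are cosmetic: in (i) you use the totality of $P_{\rm s}\cD_\Delta$ in $P_{\rm s}E(\Delta)\cH$ directly rather than the full identity \eqref{eq:SCSrepr}, and you obtain $\lim_{y\searrow0}yF_u(x+iy)=|x-\zeta|^{-2}\lim_{y\searrow0}y(M(x+iy)g,g)$ from Lemma~\ref{lem:gammaWeylProp}~(v) rather than from \eqref{eq:Mformula}; both are equivalent computations.
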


\begin{proof}
We show the statements (i) and (ii) at once. Let us define
\begin{align*}
 \cD_\Delta := \bigl\{ E (\Delta) \gamma (\zeta) g : \zeta \in \C \setminus \R,\, g \in \ran \Gamma_0 \bigr\}.
\end{align*}
Note first that the same arguments as in Step~1 of the proof of Theorem~\ref{thm:ACtheorem} imply
\begin{align}\label{eq:SCSrepr}
 \overline{\sigma_i (A_0) \cap \Delta} = \overline{\bigcup_{u \in \cD_\Delta} \supp \mu_{u, i}}, \qquad i = \rm s, sc.
\end{align}
In order to apply Lemma~\ref{mesLem}~(ii) and~(iii), respectively, we calculate the limits that appear there. In fact, it follows from \eqref{eq:Mformula} that for each $g \in \ran 
\Gamma_0$ and each $\zeta \in \C \setminus \R$
\begin{equation}\label{eq:limitTransform}
  \lim_{y\searrow 0}\Imag ( M (x + i y) g, g) = |x - \zeta|^2 \lim_{y\searrow 0} \Imag \left( (A_0 - (x + i y) )^{-1} \gamma (\zeta) g, \gamma (\zeta) g \right)
\end{equation}
and
\begin{equation}\label{eq:limitTransform'}
  \lim_{y\searrow 0} y ( M (x + i y) g, g) = |x - \zeta|^2 \lim_{y\searrow 0}  y \left( (A_0 - (x + i y) )^{-1} \gamma (\zeta) g, \gamma (\zeta) g \right)
\end{equation}
hold; cf.~\eqref{MResId} for the first identity and the text below ~\eqref{MResId} for its 
interpretation as a possible improper limit. 
Let $u = E (\Delta) \gamma (\zeta) g \in \cD_\Delta$ and let 
$$
F_u(x+iy)=\int_{\R} \frac{1}{t - (x+iy)} d (E(t)u,u) =\left( (A_0 - (x + i y) )^{-1} u, u \right)
$$ 
be the Borel transform of $\mu_u = (E (\cdot) u, u)$. Then
\begin{align*}
 \Imag F_u (x + i 0) & = \Imag \left( (A_0 - (x + i 0) )^{-1} E (\Delta) \gamma (\zeta) g, E (\Delta) \gamma (\zeta) g \right)
\end{align*}
for all $x \in \R$. From this we conclude with the help of~\eqref{eq:limitTransform} that
\begin{align}\label{eq:wichtig1}
 \Imag F_u (x + i 0) = 
 \begin{cases} |x - \zeta|^{-2} \Imag (M (x + i 0) g, g) & \text{if}\,\,\, x \in \Delta, \\
  0 & \text{if}\,\,\, x \notin \overline \Delta.
 \end{cases} 
\end{align}
Similarly, from~\eqref{eq:limitTransform'} we obtain
\begin{align}\label{eq:wichtig2}
 \lim_{y \searrow 0} y F_u (x + i y) = 
 \begin{cases} |x - \zeta|^{-2} \lim_{y \searrow 0} y (M (x + i y) g, g) &\text{if}\,\,\, x \in \Delta, \\
  0 & \text{if}\,\,\, x \notin \overline \Delta.
 \end{cases}
\end{align}

It follows from~\eqref{eq:wichtig1}, \eqref{eq:wichtig2}, and Lemma~\ref{mesLem} that
\begin{align*}
 \supp \mu_{u, \rm s} \subset \overline{\bigl\{ x \in \Delta: \Imag ( M (x + i 0) g, g) = + \infty \bigr\}}
\end{align*}
and 
\begin{align*}
 \supp \mu_{u, \rm sc} \subset \clc \Big( \Big\{ x \in \Delta : \Imag ( M (x + i 0) g, g) = + \infty, \lim_{y \searrow 0} y (M (x + i y) g, g) = 0 \Big\} \Big)
\end{align*}
for $u = E (\Delta) \gamma (\zeta) g \in \cD_\Delta$. Thus the assertions of the theorem follow from~\eqref{eq:SCSrepr}.
\end{proof}

We formulate two immediate corollaries which concern the singular continuous spectrum.

\begin{corollary}\label{thm:SCcomplete2}
Let Assumption~\ref{ass} be satisfied and assume
that~\eqref{eq:localSimple00} holds for each open interval $\delta \subset \R$ such that $\delta \cap \sigma_{\rm p} (S) = \emptyset$. 
Then the singular continuous spectrum $\sigma_{\rm sc} (A_0)$ of $A_0$ is contained in the set
\begin{align*}
  \overline{ \bigcup_{g \in \ran \Gamma_0} \clc \big(\big\{x \in \R : \Imag (M (x + i 0) g, g) = +\infty, \lim_{y \searrow 0} y (M (x + i y) g, g) = 0 \big\} \big) }.
\end{align*}
\end{corollary}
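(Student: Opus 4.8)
The plan is to deduce the corollary directly from Theorem~\ref{thm:SCtheorem}~(ii) by taking $\Delta = \R$; essentially no new argument is needed beyond checking that the specialization is legitimate. First I would note that $\R = (-\infty, +\infty)$ is itself an open interval, and hence an admissible choice for the parameter $\Delta$ appearing in Theorem~\ref{thm:SCtheorem}.

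Second, I would match up the hypotheses. For $\Delta = \R$ the assumption in Theorem~\ref{thm:SCtheorem}~(ii), namely that condition~\eqref{eq:localSimple00} hold for every open interval $\delta \subset \Delta$ with $\delta \cap \sigma_{\rm p}(S) = \emptyset$, is exactly the hypothesis imposed in the present corollary, since the inclusion $\delta \subset \Delta$ then reads $\delta \subset \R$. Thus the hypotheses of Theorem~\ref{thm:SCtheorem}~(ii) are verified.

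Third, applying that theorem with $\Delta = \R$ shows that $\sigma_{\rm sc}(A_0) \cap \R$ is contained in the set
\[
  \overline{ \bigcup_{g \in \ran \Gamma_0} \clc \big(\big\{x \in \R : \Imag (M (x + i 0) g, g) = +\infty,\ \lim_{y \searrow 0} y (M (x + i y) g, g) = 0 \big\} \big) }.
\]
Since $A_0$ is selfadjoint its spectrum is real, so $\sigma_{\rm sc}(A_0) \subset \R$ and therefore $\sigma_{\rm sc}(A_0) \cap \R = \sigma_{\rm sc}(A_0)$; this yields the asserted inclusion and completes the argument. (The companion statement for the absolutely continuous spectrum, Corollary~\ref{thm:ACcomplete2}, is obtained from Theorem~\ref{thm:ACtheorem} in precisely the same way.)

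I do not expect a genuine obstacle here, as the entire content is carried by Theorem~\ref{thm:SCtheorem}. The only point requiring the mild care indicated above is the bookkeeping that $\R$ qualifies as an open interval $\Delta$ and that the quantifier ``for each open interval $\delta \subset \Delta$'' specializes correctly to the corollary's hypothesis. This is the same reduction that Theorem~\ref{thm:SCtheorem} already highlights in its closing sentence for the simple case.
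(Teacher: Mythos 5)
Your proposal is correct and coincides with the paper's intended argument: the paper states this as an immediate corollary of Theorem~\ref{thm:SCtheorem}~(ii), obtained exactly by taking $\Delta = \R$ (a case the theorem explicitly admits) and noting that the hypotheses then match verbatim.
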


\begin{corollary}\label{thm:SCcomplete3}
Let Assumption~\ref{ass} be satisfied, let $\Delta \subset \R$ be an open interval, and assume
that the condition~\eqref{eq:localSimple} holds. Then the singular continuous spectrum $\sigma_{\rm sc} (A_0)$ of $A_0$ in $\Delta$, $\sigma_{\rm sc} (A_0) \cap \Delta$, is contained in the set
\begin{align*}
  \overline{ \bigcup_{g \in \ran \Gamma_0} \clc \big(\big\{x \in \Delta : \Imag (M (x + i 0) g, g) = +\infty, \lim_{y \searrow 0} y (M (x + i y) g, g) = 0 \big\} \big) }.
\end{align*}
\end{corollary}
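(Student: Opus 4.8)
The plan is to obtain the statement as a direct consequence of Theorem~\ref{thm:SCtheorem}~(ii). The only point that needs to be addressed is that the hypothesis of that theorem requires the local simplicity identity~\eqref{eq:localSimple00} to hold on every open subinterval $\delta \subset \Delta$ with $\delta \cap \sigma_{\rm p}(S) = \emptyset$, whereas here we are handed the single global identity~\eqref{eq:localSimple} on $\Delta$ itself. Thus the entire task reduces to checking that the present, apparently different, assumption is in fact at least as strong as the one demanded by the theorem.

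First I would verify this reduction using the monotonicity property recorded in Lemma~\ref{simplelemma}~(ii): since \eqref{eq:localSimple} holds on the Borel set $\Delta$, the identity
\begin{align*}
 E (\Delta') \cH = \clsp \bigl\{ E (\Delta') \gamma (\nu) g : \nu \in \C \setminus \R,\, g \in \ran \Gamma_0 \bigr\}
\end{align*}
holds for \emph{every} Borel set $\Delta' \subset \Delta$. In particular it holds for every open interval $\delta \subset \Delta$, and a fortiori for every such $\delta$ satisfying $\delta \cap \sigma_{\rm p}(S) = \emptyset$. Consequently the hypothesis \eqref{eq:localSimple00} required in Theorem~\ref{thm:SCtheorem}~(ii) is met for each relevant subinterval.

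Having established this, I would simply invoke Theorem~\ref{thm:SCtheorem}~(ii), whose conclusion is precisely the asserted inclusion
\begin{align*}
 \sigma_{\rm sc} (A_0) \cap \Delta \subset \overline{ \bigcup_{g \in \ran \Gamma_0} \clc \big(\big\{x \in \Delta : \Imag (M (x + i 0) g, g) = +\infty, \lim_{y \searrow 0} y (M (x + i y) g, g) = 0 \big\} \big) },
\end{align*}
so that the corollary follows at once.

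I do not expect a genuine obstacle here: the corollary trades the weak, interval-wise hypothesis of Theorem~\ref{thm:SCtheorem}~(ii) for the cleaner, stronger single-interval condition~\eqref{eq:localSimple}. The one place that deserves a moment of care—and which Lemma~\ref{simplelemma}~(ii) supplies—is the legitimacy of passing from the global simplicity identity on $\Delta$ down to the identities on its open subintervals; once that inheritance is in hand, the remainder is immediate and requires no further computation.
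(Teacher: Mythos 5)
Your proposal is correct and matches the paper's (implicit) reasoning: the paper presents this as an immediate corollary of Theorem~\ref{thm:SCtheorem}~(ii), and the only step needing justification is exactly the one you supply, namely that the hypothesis~\eqref{eq:localSimple} on $\Delta$ passes down to every open subinterval $\delta\subset\Delta$ via Lemma~\ref{simplelemma}~(ii), so that the interval-wise condition~\eqref{eq:localSimple00} required by the theorem is satisfied. Nothing further is needed.
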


As a further immediate corollary of Theorem~\ref{thm:SCtheorem} we formulate a sufficient criterion for the absence of singular continuous spectrum in terms of the limiting behaviour of the function~$M$. The 
corresponding result for ordinary boundary triples (in the special case $\Delta = \R$) can be found in~\cite{BMN02}. 

\begin{corollary}\label{cor:SCcor}
Let Assumption~\ref{ass} be satisfied and let 
$\Delta \subset \R$ be an open interval such that the condition~\eqref{eq:localSimple00} is satisfied for each open interval $\delta \subset \Delta$ with $\delta \cap \sigma_{\rm p} (S) = \emptyset$. If for 
each $g \in \ran \Gamma_0$ there exist at most countably many $x \in 
\Delta$ such that
  \begin{align*}
   \Imag (M (x + i y) g, g) \to + \infty \quad \text{and} \quad y (M (x 
+ i y) g, g) \to 0 \quad \text{as} \quad y \searrow 0
  \end{align*}
  then $\sigma_{\rm sc} (A_0) \cap \Delta = \emptyset$.  If $S$ is 
simple the assertion holds for each open interval $\Delta$, including 
the case $\Delta = \R$.  
\end{corollary}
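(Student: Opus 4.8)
The plan is to read the statement off directly from Theorem~\ref{thm:SCtheorem}~(ii), the only additional input being the elementary characterization recorded just after the definition~\eqref{eq:clc}: for a Borel set $\chi \subset \R$ one has $\clc(\chi) = \emptyset$ if and only if $\chi$ is countable. Since the hypotheses of the corollary (Assumption~\ref{ass} together with~\eqref{eq:localSimple00} for every open $\delta \subset \Delta$ with $\delta \cap \sigma_{\rm p}(S) = \emptyset$) are exactly the hypotheses of Theorem~\ref{thm:SCtheorem}~(ii), that theorem is available without any further work.

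First I would fix, for each $g \in \ran\Gamma_0$, the set
\begin{align*}
  \chi_g := \Big\{ x \in \Delta : \Imag (M (x + i 0) g, g) = +\infty,\ \lim_{y \searrow 0} y (M (x + i y) g, g) = 0 \Big\},
\end{align*}
which is precisely the set of points $x \in \Delta$ described in the hypothesis. By assumption $\chi_g$ is at most countable, so the $\clc$-characterization above gives $\clc(\chi_g) = \emptyset$ for every $g \in \ran\Gamma_0$. Consequently $\bigcup_{g \in \ran\Gamma_0} \clc(\chi_g) = \emptyset$, and hence also its closure is empty.

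Finally I would invoke Theorem~\ref{thm:SCtheorem}~(ii), which asserts
\begin{align*}
  \sigma_{\rm sc}(A_0) \cap \Delta \subset \overline{\bigcup_{g \in \ran\Gamma_0} \clc(\chi_g)} = \emptyset,
\end{align*}
so that $\sigma_{\rm sc}(A_0) \cap \Delta = \emptyset$. For the last sentence of the corollary, if $S$ is simple then by the concluding statement of Theorem~\ref{thm:SCtheorem} part~(ii) holds for every open interval $\Delta$, in particular for $\Delta = \R$, and the identical argument yields the claim there. There is no real obstacle in this proof: all the analytic content---the passage from the limiting behaviour of $M$ to the singular continuous measure via the Borel transform and Lemma~\ref{mesLem}---has already been carried out in Theorem~\ref{thm:SCtheorem}, and what remains is the purely set-theoretic observation that a countable set has empty continuous closure.
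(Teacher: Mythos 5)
Your proof is correct and is exactly the argument the paper intends: the corollary is stated as an immediate consequence of Theorem~\ref{thm:SCtheorem}~(ii), obtained by combining the inclusion there with the observation, recorded after~\eqref{eq:clc}, that $\clc(\chi)=\emptyset$ precisely when $\chi$ is countable. Nothing is missing.
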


As a further corollary of the theorems of this section we provide 
sufficient criteria for the spectrum of the operator $A_0$ to be 
purely absolutely continuous or purely singular continuous, 
respectively, in some set.

\begin{corollary}
Let Assumption~\ref{ass} be satisfied, let 
$\Delta \subset \R$ be an open interval such that the condition~\eqref{eq:localSimple} is satisfied, and assume that
\begin{align}\label{eq:noeig}
  \lim_{y \searrow 0}  y M (x + i y) g = 0
\end{align}
for all $g \in \ran \Gamma_0$ and all $x \in \Delta$.
Then the following assertions hold.
\begin{enumerate}
  \item If for each $g \in \ran \Gamma_0$ there exist at most countably 
many $x \in \Delta$ such that
$\Imag (M (x + i 0) g, g) = + \infty$, then $\sigma (A_0) \cap \Delta = 
\sigma_{\rm ac} (A_0) \cap \Delta$.
  \item If $\Imag ( M (x + i 0) g, g) = 0$ holds for all $g \in \ran 
\Gamma_0$ and almost all $x \in \Delta$, then $\sigma (A_0) \cap \Delta 
= \sigma_{\rm sc} (A_0) \cap \Delta$.
\end{enumerate}
In particular, if $S$ is simple and $\Delta$ is an arbitrary open interval
such that~\eqref{eq:noeig} holds for all $g \in \ran \Gamma_0$ and all 
$x \in \Delta$ then~{\rm (i)} and~{\rm (ii)} are satisfied.
\end{corollary}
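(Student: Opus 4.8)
The plan is to read off both assertions from the orthogonal decomposition $\cH=\cH_{\rm ac}\oplus\cH_{\rm sc}\oplus\cH_{\rm pp}$ associated with $A_0$, which gives $\sigma(A_0)=\sigma_{\rm ac}(A_0)\cup\sigma_{\rm sc}(A_0)\cup\overline{\sigma_{\rm p}(A_0)}$, by showing that under the respective hypotheses two of the three summands fail to meet the open interval $\Delta$. The common first step disposes of the point spectrum: since $\lim_{y\searrow0}yM(x+iy)g=0$ is equivalent to $\textup{s-}\lim_{\eta\searrow0}i\eta M(x+i\eta)g=0$, the hypothesis~\eqref{eq:noeig} states precisely that $R_xM=0$ for every $x\in\Delta$. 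As~\eqref{eq:localSimple} holds on $\Delta$, Lemma~\ref{simplelemma}~(iv) gives $\Delta\cap\sigma_{\rm p}(S)=\emptyset$, so Corollary~\ref{thm:eigen} applies at each $x\in\Delta$ and shows that $A_0$ has no eigenvalue in $\Delta$. Since $\Delta$ is open, this also excludes accumulation points of eigenvalues inside $\Delta$, whence $\overline{\sigma_{\rm p}(A_0)}\cap\Delta=\emptyset$.

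For assertion~(i) I would next eliminate the singular continuous spectrum via Corollary~\ref{cor:SCcor}. Its local simplicity hypothesis --- namely~\eqref{eq:localSimple00} for every open $\delta\subset\Delta$ with $\delta\cap\sigma_{\rm p}(S)=\emptyset$ --- follows from~\eqref{eq:localSimple} by Lemma~\ref{simplelemma}~(ii). The decisive point is that~\eqref{eq:noeig} trivialises the second defining condition of the exceptional set: $y(M(x+iy)g,g)\to0$ holds for \emph{all} $x\in\Delta$, so the set of $x$ at which both $\Imag(M(x+iy)g,g)\to+\infty$ and $y(M(x+iy)g,g)\to0$ coincides with $\{x\in\Delta:\Imag(M(x+i0)g,g)=+\infty\}$. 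By the hypothesis of~(i) this set is at most countable for each $g\in\ran\Gamma_0$, so Corollary~\ref{cor:SCcor} yields $\sigma_{\rm sc}(A_0)\cap\Delta=\emptyset$. Combined with the first step and the decomposition above, only the absolutely continuous summand survives in $\Delta$, i.e.\ $\sigma(A_0)\cap\Delta=\sigma_{\rm ac}(A_0)\cap\Delta$.

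For assertion~(ii) the roles are interchanged, and I would instead eliminate the absolutely continuous spectrum. Again~\eqref{eq:localSimple} together with Lemma~\ref{simplelemma}~(ii) supplies the local simplicity hypothesis of Corollary~\ref{cor:ACequiv}, and the assumption of~(ii) --- that $\Imag(M(x+i0)g,g)=0$ for all $g\in\ran\Gamma_0$ and almost all $x\in\Delta$ --- is exactly the criterion there for $\sigma_{\rm ac}(A_0)\cap\Delta=\emptyset$. Together with $\overline{\sigma_{\rm p}(A_0)}\cap\Delta=\emptyset$ from the first step, the decomposition leaves only the singular continuous part, giving $\sigma(A_0)\cap\Delta=\sigma_{\rm sc}(A_0)\cap\Delta$. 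For the ``in particular'' statement I would invoke Lemma~\ref{simplelemma}~(i): when $S$ is simple, condition~\eqref{eq:localSimple} is automatically satisfied on every Borel set, hence on every open interval, so~(i) and~(ii) hold whenever~\eqref{eq:noeig} is in force. I do not anticipate a genuine obstacle here; the only thing that needs care is the bookkeeping of hypotheses of the cited corollaries, in particular recognising the dual role played by~\eqref{eq:noeig}, which simultaneously annihilates the point spectrum through Corollary~\ref{thm:eigen} and renders automatic one of the two conditions defining the exceptional set in Corollary~\ref{cor:SCcor}.
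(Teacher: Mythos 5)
Your proposal is correct and follows exactly the route the paper intends: the corollary is stated without proof as an immediate consequence of the preceding results, and your derivation --- killing the point spectrum in $\Delta$ via Corollary~\ref{thm:eigen} together with Lemma~\ref{simplelemma}~(iv) (noting that \eqref{eq:noeig} is just $R_xM=0$), then removing $\sigma_{\rm sc}$ via Corollary~\ref{cor:SCcor} in case (i) and $\sigma_{\rm ac}$ via Corollary~\ref{cor:ACequiv} in case (ii), with Lemma~\ref{simplelemma}~(ii) supplying the localized simplicity hypothesis \eqref{eq:localSimple00} --- is precisely the intended bookkeeping. The observation that \eqref{eq:noeig} plays the dual role of annihilating eigenvalues and trivializing one condition in the exceptional set of Corollary~\ref{cor:SCcor} is exactly the right point to make explicit.
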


\section{Second order elliptic differential operators on $\R^n$}\label{sec:4}

In this section we show how the spectrum of a selfadjoint second order elliptic differential operator on $\R^n$, $n \geq 2$,
can be described with the help of a Titchmarsh--Weyl function acting on an $n-1$-dimensional compact interface $\Sigma$ which splits $\R^n$ into a
bounded domain $\Omega_{\rm i}$ and
an unbounded domain $\Omega_{\rm e}$ with common boundary $\Sigma$.

We consider the differential expression
\begin{align*}
 \cL  = - \sum_{j, k = 1}^n \frac{\partial}{\partial x_j} a_{jk} \frac{\partial}{\partial x_k} +
 \sum_{j = 1}^n \left( a_j \frac{\partial}{\partial x_j} - \frac{\partial}{\partial x_j} \overline{a_j} \right) + a,
\end{align*}
where $a_{jk}, a_j \in C^\infty(\R^n)$ together with their derivatives are bounded and
satisfy $a_{jk} (x) = \overline{a_{kj} (x)}$ for all $x \in \R^n$, $1 \leq j, k \leq n$,
and $a \in L^\infty(\R^n)$ is real valued. Moreover, we assume that $\cL$ is uniformly elliptic on $\R^n$, that is, there exists $E > 0$ with
 \begin{align}\label{eq:elliptic}
 \sum_{j, k = 1}^n a_{jk} (x) \xi_j \xi_k \geq E \sum_{k = 1}^n \xi_k^2, \quad x \in \R^n, \quad \xi = (\xi_1, \dots, \xi_n)^\top \in \R^n.
\end{align}

The selfadjoint operator
associated with $\cL$ in $L^2 (\R^n)$ is given by
\begin{align}\label{eq:SchroedingerOp}
 A_0 u = \cL u, \quad \dom A_0 = H^2 (\R^n),
\end{align}
where $H^2 (\R^n)$ is the usual $L^2$-based Sobolev space of order $2$ on $\R^n$.
In Sections~\ref{41} and \ref{42} two different choices of Titchmarsh--Weyl functions for the differential expression $\cL$,
both acting on the interface $\Sigma$, are studied. 

\subsection{A Weyl function corresponding to a transmission problem}\label{41}

We first consider a Weyl function for the operator $A_0$ which appears in transmission problems in connection with single layer potentials
(see, e.g. \cite[Chapter 6]{M00} and \cite{R09}) and which was also used in \cite{AP04}
to generalize the classical limit point/limit circle analysis from singular Sturm--Liouville
theory to Schr\"{o}dinger operators in $\dR^3$.

Let $\Sigma$ be the boundary of
a bounded $C^\infty$-domain $\Omega_{\rm i} \subset \R^n$ and denote by $\Omega_{\rm e}$ the exterior of $\Sigma$, that is, $\Omega_{\rm e} = \R^n \setminus \overline{\Omega_{\rm i}}$.
In the following we  make use of operators induced by $\cL$ in $L^2 (\Omega_{\rm i})$ and $L^2 (\Omega_{\rm e})$, respectively.
For $j = {\rm i, e}$ we write $\cL_j$ for the restriction of the differential expression $\cL$ to functions on $\Omega_j$.
For functions in $L^2(\Omega_j)$ we use the index $j$ and we write $u=u_{\rm i}\oplus u_{\rm e}$ for $u\in L^2(\dR^n)$. As
$\Sigma$ is smooth, the selfadjoint Dirichlet operator associated with $\cL_j$ in $L^2 (\Omega_j)$ is given by
\begin{equation*}
 A_{{\rm D}, j} u_j = \cL_j u_j, \quad \dom A_{{\rm D}, j} = \left\{ u_j \in H^2 (\Omega_j) : u_j |_{\Sigma} = 0 \right\},\qquad j = {\rm i, e},
\end{equation*}
where $u_j |_\Sigma$ denotes the trace of $u_j$ at $\Sigma=\partial\Omega_j$.
Let $H^s (\Sigma)$ be the Sobolev spaces of orders $s \geq 0$ on $\Sigma$. We recall that for each
$\lambda \in \rho (A_{{\rm D}, j})$ and each $g \in H^{3/2} (\Sigma)$ there exists a unique solution
$u_{\lambda, j} \in H^2 (\Omega_j)$ of the boundary value problem $\cL_j u_j = \lambda u_j$, $u_j |_\Sigma =g$. This implies that for
each $\lambda \in \rho (A_{{\rm D}, j})$
the Dirichlet-to-Neumann map
\begin{align}\label{eq:DNie}
 \Lambda_j (\lambda) : H^{3/2} (\Sigma) \to H^{1/2} (\Sigma), \quad u_{\lambda,j} |_\Sigma \mapsto \frac{\partial u_{\lambda,j}}{\partial \nu_{\cL_j}} \Big|_\Sigma,
\end{align}
is well-defined; here the conormal derivative with respect to $\cL_j$ in the direction of the outer unit normal
$\nu_j = (\nu_{j,1}, \dots, \nu_{j,n})^\top$ at $\Sigma = \partial \Omega_j$ is defined by
\begin{align*}
 \frac{\partial u}{\partial \nu_{\cL_j}} \Big|_{\Sigma} = \sum_{k, l = 1}^n a_{kl} \nu_{j,k} \frac{\partial u}{\partial x_l} \Big|_{\Sigma} +
 \sum_{k = 1}^n \overline{a_k} \nu_{j,k} u |_{\Sigma}.
\end{align*}
Note that the outer unit normals at $\partial \Omega_{\rm i}$ and $\partial \Omega_{\rm e}$ coincide up to a minus sign. The operator $\Lambda_{\rm i} (\lambda) + \Lambda_{\rm e} (\lambda)$ is invertible for all
$\lambda\in\rho(A_0)\cap\rho (A_{\rm D, i}) \cap \rho (A_{\rm D, e})$ and, hence, the operator function
\begin{align}\label{eq:WeylSchreod}
 \lambda\mapsto M (\lambda) = \big( \Lambda_{\rm i} (\lambda) + \Lambda_{\rm e} (\lambda) \big)^{-1}
\end{align}
is well-defined on $\rho(A_0)\cap\rho (A_{\rm D, i}) \cap \rho (A_{\rm D, e})$. We remark that the values $M (\lambda)$ are bounded operators in $L^2 (\Sigma)$
with domain $H^{1/2} (\Sigma)$; cf.~Lemma~\ref{prop:qbtSchreod} below for the details.

The following theorem is the main result of this section. It states that the absolutely continuous spectrum of $A_0$ can be recovered completely from the knowledge of the
function $M$ in~\eqref{eq:WeylSchreod}, while the eigenvalues and corresponding eigenspaces may be only partially
visible for the function $M$. This depends on the choice of the interface $\Sigma$ and the fact that the symmetric operator
\begin{align}\label{eq:SSchroed}
 S u = \cL u, \quad \dom S = \left\{ u \in H^2 (\R^n) : u |_{\Sigma} = 0 \right\},
\end{align}
may have eigenvalues. In particular, in general $S$ is not simple; cf.~Example~\ref{ex:simple} and Example~\ref{ex:simple2} below.

\begin{theorem}\label{thm:eigenSchroed}
Let $A_0$, $\Sigma$, $S$, and $M$ be as above, let $\lambda,\mu\in\dR$ such that $\lambda\not\in\overline{\sigma_{\rm p} (S)}$,
$\mu \not\in\sigma_{\rm p} (S)$,
and let $\Delta \subset \R$ be an open interval.
Then the following assertions hold.
\begin{enumerate}
 \item $\mu \in\sigma_{\rm p} (A_0)$ if and only if $R_\mu M :=$ \textup{s}-$\lim_{\eta \searrow 0} i \eta M (\mu + i \eta) \neq 0$; if the multiplicity of the eigenvalue $\mu$ is finite then the mapping
 \begin{align}\label{eq:tauSchroed}
  \tau : \ker (A_0 - \mu) \to \ran R_\mu M, \quad u \mapsto u |_\Sigma,
 \end{align}
 is bijective; if the multiplicity of the eigenvalue $\mu$ is infinite then the mapping
 \begin{align}\label{eq:taugenSchroed}
  \tau : \ker (A_0 - \mu) \to \cl_\tau \bigl(\ran R_\mu M\bigr), \quad u \mapsto u |_\Sigma,
 \end{align}
 is bijective, where $\cl_\tau$ denotes the closure in the normed space $\ran \tau$.
 \item $\lambda$ is an isolated eigenvalue of $A_0$ if and only if $\lambda$ is a pole in the strong sense of~$M$.
 In this case \eqref{eq:tauSchroed} and~\eqref{eq:taugenSchroed} hold with $\mu=\lambda$ and $R_\lambda M = \Res_\lambda M$.
 \item $\lambda\in\rho(A_0)$ if and only if $M$ can be continued analytically into $\lambda$.
 \item $\lambda\in\sigma_{\rm c} (A_0)$ if and only if \textup{s}-$\lim_{\eta \searrow 0} i \eta M (\lambda + i \eta) = 0$ and $M$ cannot be continued analytically into $\lambda$.
 \item The absolutely continous spectrum $\sigma_{\rm ac} (A_0)$ of $A_0$ in $\Delta$ is given by
 \begin{equation*}
  \qquad\qquad\overline{\sigma_{\rm ac} (A_0)\cap\Delta} = \overline{ \bigcup_{g \in H^{1/2} (\Sigma)} \clac \big( \big\{x \in \Delta : 0 < \Imag (M (x + i 0) g, g) < +\infty \big\} \big) }
 \end{equation*}
 and, in particular, $\sigma_{\rm ac} (A_0) \cap \Delta = \emptyset$ if and only if $\Imag (M (x + i 0) g, g) = 0$ holds for
 all $g \in H^{1/2} (\Sigma)$ and for almost all $x \in \Delta$.
 \item The singular continous spectrum $\sigma_{\rm sc} (A_0)$ of $A_0$ in $\Delta$ is contained in
 \begin{equation*}
  \overline{ \bigcup_{g \in H^{1/2} (\Sigma)} \clc \big(\big\{x \in \Delta : \Imag (M (x + i 0) g, g) = +\infty, \lim_{y \searrow 0} y (M (x + i y) g, g) = 0 \big\} \big) },
 \end{equation*}
 and, in particular,
 if for each $g \in H^{1/2} (\Sigma)$ there exist at most countably many $x \in \Delta$ such that
 $\Imag (M (x + i y) g, g) \to + \infty$ and $y (M (x + i y) g, g) \to 0$ as $ y \searrow 0$
 then $\sigma_{\rm sc} (A_0) \cap \Delta = \emptyset$.
\end{enumerate}
\end{theorem}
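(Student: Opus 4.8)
The plan is to realize $M$ as the Weyl function of a quasi boundary triple and then to invoke the abstract results of Section~\ref{sec:abstr}; the only nontrivial input will be the verification of the local simplicity condition~\eqref{eq:localSimple00}. By Lemma~\ref{prop:qbtSchreod} the triple $\{L^2(\Sigma),\Gamma_0,\Gamma_1\}$ with
\begin{align*}
 \Gamma_0 u = \frac{\partial u_{\rm i}}{\partial\nu_{\cL_{\rm i}}}\Big|_\Sigma + \frac{\partial u_{\rm e}}{\partial\nu_{\cL_{\rm e}}}\Big|_\Sigma, \quad \Gamma_1 u = u|_\Sigma,
\end{align*}
defined on $\dom T = \{u_{\rm i}\oplus u_{\rm e} : u_j\in H^2(\Omega_j),\ u_{\rm i}|_\Sigma = u_{\rm e}|_\Sigma\}$, is a quasi boundary triple for $S^*$ whose Weyl function is $M$ in~\eqref{eq:WeylSchreod}, with $A_0 = T\upharpoonright\ker\Gamma_0$ the operator in~\eqref{eq:SchroedingerOp} and $\ran\Gamma_0 = H^{1/2}(\Sigma)$. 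Under this identification the abstract map $u\mapsto\Gamma_1 u$ becomes exactly $u\mapsto u|_\Sigma$, and assertions~(i)--(vi) coincide, respectively, with the conclusions of Corollary~\ref{thm:eigen}, Proposition~\ref{prop:isolatedEV}, Theorem~\ref{thm:specTotal}, Theorem~\ref{thm:ACtheorem} together with Corollary~\ref{cor:ACequiv}, and Theorem~\ref{thm:SCtheorem} together with Corollary~\ref{cor:SCcor}. It therefore suffices to check that~\eqref{eq:localSimple00} holds for every open interval $\delta$ with $\delta\cap\sigma_{\rm p}(S) = \emptyset$; for the pointwise statements~(i)--(iv) one then applies these results on a small interval around $\mu$ (resp.\ $\lambda$), which exists because $\mu\notin\sigma_{\rm p}(S)$ (resp.\ $\lambda\notin\overline{\sigma_{\rm p}(S)}$).

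The main obstacle is that $S$ in~\eqref{eq:SSchroed} need not be simple, so I would first isolate its selfadjoint part. Set $\cH_{\rm sa} := \bigl(\clsp\{\gamma(\nu)g:\nu\in\C\setminus\R,\ g\in\ran\Gamma_0\}\bigr)^\perp$, the maximal subspace reducing $S$ to a selfadjoint operator $S_{\rm sa}$. Using $\gamma(\nu)^* = \Gamma_1(A_0-\overline\nu)^{-1}$ from Lemma~\ref{lem:gammaWeylProp}(i) one sees that $u\in\cH_{\rm sa}$ if and only if $[(A_0-z)^{-1}u]|_\Sigma = 0$ for all $z\in\C\setminus\R$, and the resolvent identity in Lemma~\ref{lem:gammaWeylProp}(ii) shows that $\cH_{\rm sa}$ reduces $A_0$. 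For $u = u_{\rm i}\oplus u_{\rm e}\in\cH_{\rm sa}$ the function $(A_0-z)^{-1}u$ lies in $H^2(\R^n)$ and vanishes on $\Sigma$, hence it solves the two Dirichlet problems separately and
\begin{align*}
 (A_0-z)^{-1}u = (A_{{\rm D},{\rm i}}-z)^{-1}u_{\rm i}\oplus(A_{{\rm D},{\rm e}}-z)^{-1}u_{\rm e}.
\end{align*}
In other words $S_{\rm sa}$ is the part of the decoupled Dirichlet operator $A_{{\rm D},{\rm i}}\oplus A_{{\rm D},{\rm e}}$ in $\cH_{\rm sa}$.

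The crucial step is then to show that $S_{\rm sa}$ has purely discrete spectrum. Since $\Omega_{\rm i}$ is bounded and $\cL$ is uniformly elliptic, $A_{{\rm D},{\rm i}}$ has compact resolvent, so the continuous subspace of $A_{{\rm D},{\rm i}}\oplus A_{{\rm D},{\rm e}}$ is contained in $\{0\}\oplus L^2(\Omega_{\rm e})$; consequently any $u$ in the continuous subspace of $S_{\rm sa}$ has $u_{\rm i} = 0$. For such $u$ the interior part of $(A_0-z)^{-1}u\in H^2(\R^n)$ equals $(A_{{\rm D},{\rm i}}-z)^{-1}\cdot 0 = 0$, so, matching conormal derivatives across $\Sigma$, the exterior conormal derivative $\frac{\partial}{\partial\nu_{\cL_{\rm e}}}(A_{{\rm D},{\rm e}}-z)^{-1}u_{\rm e}|_\Sigma$ vanishes for every $z$; since the minimal operator associated with $\cL_{\rm e}$ in $L^2(\Omega_{\rm e})$ is simple --- a standard property of elliptic operators --- this forces $u_{\rm e} = 0$. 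Hence $S_{\rm sa}$ has no continuous spectrum, and as the simple part carries no eigenvalues we get $\sigma(S_{\rm sa}) = \overline{\sigma_{\rm p}(S_{\rm sa})} = \overline{\sigma_{\rm p}(S)}$, so $E(\delta)\cH_{\rm sa} = 0$ whenever $\delta$ is open with $\delta\cap\sigma_{\rm p}(S) = \emptyset$. As all defect elements $\gamma(\nu)g$ lie in $\cH_{\rm sa}^\perp$ and both $\cH_{\rm sa}$ and $\cH_{\rm sa}^\perp$ reduce $A_0$, we obtain $E(\delta)\cH = E(\delta)\cH_{\rm sa}^\perp = \clsp\{E(\delta)\gamma(\nu)g\}$, which is~\eqref{eq:localSimple00}. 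Feeding this into the abstract theorems listed above completes the proof; I expect the identification of $S_{\rm sa}$ with a restriction of the decoupled Dirichlet operator, and the exploitation of the asymmetry between the compact-resolvent interior and the simple exterior, to be the heart of the argument.
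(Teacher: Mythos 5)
Your proof is correct, and its overall architecture (realize $M$ as the Weyl function of the quasi boundary triple of Lemma~\ref{prop:qbtSchreod}, verify the local simplicity condition~\eqref{eq:localSimple00} for every open $\delta$ with $\delta\cap\sigma_{\rm p}(S)=\emptyset$, then quote the abstract results of Section~\ref{sec:abstr}) is exactly the paper's. The difference lies entirely in your substitute for Lemma~\ref{sonnenschein}, and there your route is genuinely different. The paper argues ``from below'': it uses the simplicity of both $S_{\rm i}$ and $S_{\rm e}$ to write $L^2(\R^n)$ as the closed span of decoupled defect elements $\gamma_{\rm i}(\mu)g\oplus\gamma_{\rm e}(\nu)h$, and then shows via Stone's formula (applied on the gaps of the discrete spectrum of $A_{\rm D,i}$, with a separate orthogonality argument at each eigenvalue $\lambda_k$ of $A_{\rm D,i}$) that $E(\delta)(\gamma_{\rm i}(\mu)g\oplus 0)$ and $E(\delta)(0\oplus\gamma_{\rm e}(\mu)h)$ lie in $\clsp\{E(\delta)\gamma(\nu)g\}$. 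You argue ``from above'': you pass to $\cH_{\rm sa}=\bigl(\clsp\{\gamma(\nu)g\}\bigr)^{\perp}$, observe via $\gamma(\nu)^*=\Gamma_1(A_0-\overline{\nu})^{-1}$ that $u\in\cH_{\rm sa}$ precisely when $(A_0-z)^{-1}u$ has vanishing trace on $\Sigma$, so that on $\cH_{\rm sa}$ the resolvent of $A_0$ decouples into the two Dirichlet resolvents, and then exploit the interior/exterior asymmetry (compact resolvent of $A_{\rm D,i}$ versus simplicity of $S_{\rm e}$ combined with the conormal-derivative matching for $H^2(\R^n)$-functions) to conclude that the part of $A_0$ in $\cH_{\rm sa}$ is pure point with eigenvalues contained in $\sigma_{\rm p}(S)$, whence $E(\delta)\cH_{\rm sa}=\{0\}$ and~\eqref{eq:localSimple00} follows. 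Your version is more conceptual and slightly more economical --- it never uses the simplicity of $S_{\rm i}$ --- while both rest on the same two essential inputs (discreteness of $\sigma(A_{\rm D,i})$ and simplicity of the exterior minimal operator, which the paper imports from~\cite{BR13}). To make it airtight you should still write out the routine reduction facts: that $\cH_{\rm sa}$ reduces both $A_0$ and $A_{\rm D,i}\oplus A_{\rm D,e}$ (needed so that the continuous subspace of the part of $A_0$ in $\cH_{\rm sa}$ sits inside that of the decoupled Dirichlet operator), and that $E(\delta)\cH_{\rm sa}=\{0\}$ really yields $E(\delta)\cH=\clsp\{E(\delta)\gamma(\nu)g\}$; note also that for item~(i) no interval around $\mu$ is required, since Corollary~\ref{thm:eigen} only needs $\mu\notin\sigma_{\rm p}(S)$.
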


The proof of Theorem~\ref{thm:eigenSchroed} makes use of the following two lemmas and is given at the end of this subsection.

\begin{lemma}\label{prop:qbtSchreod}
Let $S$ be defined as in~\eqref{eq:SSchroed} and let
\begin{align}\label{eq:TSchreod}
 T u = \cL u, \quad \dom T = \left\{ u_{\rm i} \oplus u_{\rm e} \in H^2 (\Omega_{\rm i}) \oplus H^2 (\Omega_{\rm e}) : u_{\rm i} |_\Sigma = u_{\rm e} |_\Sigma \right\}.
\end{align}
Then $\{ L^2 (\Sigma), \Gamma_0, \Gamma_1\}$, where
\begin{align*}
 \Gamma_0, \Gamma_1 : \dom T \to L^2 (\Sigma),\quad \Gamma_0 u = \frac{\partial u_{\rm i}}{\partial \nu_{\cL_{\rm i}}}
 \Big|_\Sigma + \frac{\partial u_{\rm e}}{\partial \nu_{\cL_{\rm e}}} \Big|_\Sigma, \quad \Gamma_1 u = u |_\Sigma,
\end{align*}
is a quasi boundary triple for $S^*$ such that $A_0 = T \upharpoonright \ker \Gamma_0 $ and $\ran\Gamma_0= H^{1/2} (\Sigma)$.
For all $\lambda \in\rho(A_0)\cap \rho (A_{\rm D, i}) \cap \rho (A_{\rm D, e})$
the corresponding Weyl function coincides with the function $M$ in \eqref{eq:WeylSchreod}, and $\dom M (\lambda) = H^{1/2} (\Sigma)$.
\end{lemma}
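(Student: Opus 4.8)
The plan is to verify that $\{L^2(\Sigma),\Gamma_0,\Gamma_1\}$ satisfies the three hypotheses of Proposition~\ref{prop:ratetheorem}, which then yields simultaneously that $S=T\upharpoonright\ker\Gamma$ is closed, densely defined and symmetric, that $\overline T=S^*$, and that the triple is a quasi boundary triple with $A_0=T\upharpoonright\ker\Gamma_0$; afterwards the Weyl function is identified by a direct computation. The two PDE facts I would rely on are the second Green identity for the formally symmetric expression $\cL_j$ on the smooth domain $\Omega_j$ (in which the conormal derivative \eqref{eq:DNie} is designed to appear as the boundary term) and the elliptic transmission/gluing principle, stating that two functions $u_{\rm i}\in H^2(\Omega_{\rm i})$, $u_{\rm e}\in H^2(\Omega_{\rm e})$ whose Dirichlet traces and conormal derivatives match across $\Sigma$ piece together to a function in $H^2(\R^n)$.

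For the abstract Green identity \eqref{eq:absGreen} I would fix $u=u_{\rm i}\oplus u_{\rm e}$, $v=v_{\rm i}\oplus v_{\rm e}$ in $\dom T$, split $(Tu,v)-(u,Tv)$ into the contributions over $\Omega_{\rm i}$ and $\Omega_{\rm e}$, and apply the second Green identity on each piece. Each contribution produces a boundary term $\bigl(u_j|_\Sigma,\tfrac{\partial v_j}{\partial\nu_{\cL_j}}|_\Sigma\bigr)-\bigl(\tfrac{\partial u_j}{\partial\nu_{\cL_j}}|_\Sigma,v_j|_\Sigma\bigr)$ over $\Sigma$. Using the defining matching conditions $u_{\rm i}|_\Sigma=u_{\rm e}|_\Sigma$ and $v_{\rm i}|_\Sigma=v_{\rm e}|_\Sigma$, the traces factor out of the sum and the two conormal derivatives combine into $\Gamma_0$, so that the total boundary term is exactly $(\Gamma_1 u,\Gamma_0 v)-(\Gamma_0 u,\Gamma_1 v)$. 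The opposite orientation of the outer normals at $\partial\Omega_{\rm i}$ and $\partial\Omega_{\rm e}$ is precisely what makes the \emph{sum} of conormal derivatives the natural boundary object here.

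Next I would check the remaining hypotheses of Proposition~\ref{prop:ratetheorem}. The space $\ker\Gamma=\ker\Gamma_0\cap\ker\Gamma_1$ consists of those $u$ with vanishing Dirichlet trace and vanishing conormal sum; by the gluing principle this is exactly $\{u\in H^2(\R^n):u|_\Sigma=0\}$, the domain of $S$ in \eqref{eq:SSchroed}, which contains $C_c^\infty(\Omega_{\rm i})\oplus C_c^\infty(\Omega_{\rm e})$ and is therefore dense in $L^2(\R^n)$. For the selfadjoint restriction I would show $\ker\Gamma_0=H^2(\R^n)$: one inclusion is the gluing principle (matching traces and matching conormal derivatives produce a global $H^2$ function), and the other follows since an $H^2(\R^n)$ function has no jump in its traces across $\Sigma$, so $\Gamma_0 u=0$; thus $T\upharpoonright\ker\Gamma_0$ is the selfadjoint operator $A_0$ from \eqref{eq:SchroedingerOp}. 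For the density of $\ran\Gamma$ in $L^2(\Sigma)\times L^2(\Sigma)$, I would first realize any prescribed Dirichlet trace $\psi\in H^{3/2}(\Sigma)$ by solving the two interior and exterior Dirichlet problems, and then correct the value of $\Gamma_0$ independently by adding $w=w_{\rm i}\oplus 0$ with $w_{\rm i}\in H^2(\Omega_{\rm i})$, $w_{\rm i}|_\Sigma=0$, and prescribed conormal derivative; since such conormal derivatives exhaust $H^{1/2}(\Sigma)$, this simultaneously establishes $\ran\Gamma_0=H^{1/2}(\Sigma)$ and the density of $\ran\Gamma$.

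Finally, to identify the Weyl function I would take $\lambda\in\rho(A_0)\cap\rho(A_{\rm D,i})\cap\rho(A_{\rm D,e})$ and $\phi\in\ran\Gamma_0$, and solve $\Gamma_0 u_\lambda=\phi$ with $u_\lambda\in\ker(T-\lambda)$. Writing $g:=u_{\lambda,\rm i}|_\Sigma=u_{\lambda,\rm e}|_\Sigma$, the solutions of $\cL_j u_{\lambda,j}=\lambda u_{\lambda,j}$ satisfy $\tfrac{\partial u_{\lambda,j}}{\partial\nu_{\cL_j}}|_\Sigma=\Lambda_j(\lambda)g$, so the equation $\Gamma_0 u_\lambda=\phi$ reads $(\Lambda_{\rm i}(\lambda)+\Lambda_{\rm e}(\lambda))g=\phi$; inverting gives $M(\lambda)\phi=\Gamma_1 u_\lambda=g=(\Lambda_{\rm i}(\lambda)+\Lambda_{\rm e}(\lambda))^{-1}\phi$, which is \eqref{eq:WeylSchreod}, and $\dom M(\lambda)=\ran\Gamma_0=H^{1/2}(\Sigma)$ follows from Lemma~\ref{lem:gammaWeylProp}~(iv). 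The main obstacle is not the abstract bookkeeping but the PDE input: making the gluing principle $\ker\Gamma_0=H^2(\R^n)$ rigorous, and pinning down the exact mapping properties of the Dirichlet and conormal trace maps between the Sobolev spaces on $\Sigma$ that underlie both the range computations and the well-definedness of $\Lambda_{\rm i}(\lambda)+\Lambda_{\rm e}(\lambda)$ together with its inverse.
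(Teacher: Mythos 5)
Your proposal is correct and follows essentially the same route as the paper: verify the three hypotheses of Proposition~\ref{prop:ratetheorem} (trace-theorem surjectivity for the density of $\ran(\Gamma_0,\Gamma_1)^\top$, Green's identity on each subdomain combined with the matching of Dirichlet traces for \eqref{eq:absGreen}, and a selfadjoint restriction inside $\ker\Gamma_0$), and then read off the Weyl function from the relation $(\Lambda_{\rm i}(\lambda)+\Lambda_{\rm e}(\lambda))\Gamma_1 u_\lambda=\Gamma_0 u_\lambda$ together with the injectivity of $\Gamma_0$ on $\ker(T-\lambda)$ for $\lambda\in\rho(A_0)$. The one place you work harder than necessary is the gluing identity $\ker\Gamma_0=H^2(\R^n)$: Proposition~\ref{prop:ratetheorem} only requires the easy inclusion $\dom A_0\subset\ker\Gamma_0$ and then yields $T\upharpoonright\ker\Gamma_0=A_0$ as a conclusion, so the nontrivial direction of your gluing principle can be bypassed entirely, which is what the paper does.
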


\begin{proof}
The proof is similar to the proof of \cite[Proposition 3.2]{BLL13}. For the convenience of the reader
we provide the details.
In order to show that $\{ L^2 (\Sigma), \Gamma_0, \Gamma_1\}$ is a quasi boundary triple for $S^*$
we verify (i)-(iii) in the assumptions of Proposition~\ref{prop:ratetheorem}. Recall first that by the classical trace theorem the mapping
$$
H^2(\Omega_j)\rightarrow H^{3/2}(\Sigma)\times H^{1/2}(\Sigma),\qquad u_j\mapsto\left\{u_j\vert_\Sigma, \frac{\partial u_j}{\partial \nu_{\cL_j}}
 \Big|_\Sigma\right\},\quad  j = \rm i, e,
$$
is onto. Hence, for given
$\varphi\in H^{1/2}(\Sigma)$ and $\psi\in H^{3/2}(\Sigma)$ there exist $u_j\in H^2(\Omega_j)$ such that
$$
\frac{\partial u_{\rm i}}{\partial \nu_{\cL_{\rm i}}} \Big|_\Sigma = \varphi,\quad \frac{\partial u_{\rm e}}{\partial \nu_{\cL_{\rm e}}} \Big|_\Sigma = 0,\quad\text{and}\quad
u_{\rm i} |_\Sigma =\psi= u_{\rm e} |_\Sigma,
$$
and it follows $u_{\rm i} \oplus u_{\rm e} \in \dom T$, $\Gamma_0 (u_{\rm i} \oplus u_{\rm e}) = \varphi$, and $\Gamma_1 (u_{\rm i} \oplus u_{\rm e}) = \psi$. This implies that $\ran(\Gamma_0,\Gamma_1)^\top=H^{1/2}(\Sigma)\times H^{3/2}(\Sigma)$. In particular, $\ran(\Gamma_0,\Gamma_1)^\top$
is dense in $L^2(\Sigma)\times L^2(\Sigma)$.
Furthermore, $C_0^\infty(\dR^n\backslash\Sigma)$ is a dense subspace of $L^2(\dR^n)$ which is contained in $\ker\Gamma_0\cap\ker\Gamma_1$.
Thus (i) in Proposition~\ref{prop:ratetheorem} holds. Next we verify the identity \eqref{eq:absGreen} for $u=u_{\rm i}\oplus u_{\rm e},
v=v_{\rm i}\oplus v_{\rm e}\in\dom T$.
With the help of Green's identity and $u\vert_\Sigma=u_j\vert_\Sigma$,  $v\vert_\Sigma=v_j\vert_\Sigma$, $j = \rm i,e$, we compute
\begin{equation*}
\begin{split}
 &(Tu,v)-(u,Tv)=(\cL_{\rm e}u_{\rm e},v_{\rm e})-(u_{\rm e},\cL_{\rm e}v_{\rm e})+(\cL_{\rm i}u_{\rm i},v_{\rm i})-(u_{\rm i},\cL_{\rm i}v_{\rm i})\\
 &\qquad =\left(u_{\rm e}\vert_\Sigma,\frac{\partial v_{\rm e}}{\partial \nu_{\cL_{\rm e}}} \Big|_\Sigma\right)-
          \left(\frac{\partial u_{\rm e}}{\partial \nu_{\cL_{\rm e}}} \Big|_\Sigma,v_{\rm e}\vert_\Sigma\right)+
          \left(u_{\rm i}\vert_\Sigma,\frac{\partial v_{\rm i}}{\partial \nu_{\cL_{\rm i}}} \Big|_\Sigma\right)-
          \left(\frac{\partial u_{\rm i}}{\partial \nu_{\cL_{\rm i}}} \Big|_\Sigma,v_{\rm i}\vert_\Sigma\right)\\
 &\qquad =\left(u\vert_\Sigma, \frac{\partial v_{\rm i}}{\partial \nu_{\cL_{\rm i}}} \Big|_\Sigma+\frac{\partial v_{\rm e}}{\partial \nu_{\cL_{\rm e}}} \Big|_\Sigma\right)-
          \left(\frac{\partial u_{\rm i}}{\partial \nu_{\cL_{\rm i}}} \Big|_\Sigma+\frac{\partial u_{\rm e}}{\partial \nu_{\cL_{\rm e}}} \Big|_\Sigma,v\vert_\Sigma\right)\\
 &\qquad=(\Gamma_1 u,\Gamma_0 v)-(\Gamma_0 u,\Gamma_1 v).
 \end{split}
 \end{equation*}
We have shown that (ii) in Proposition~\ref{prop:ratetheorem} holds. Finally it is not difficult to see that $\dom A_0=H^2(\dR^n)$ is contained in
$\ker\Gamma_0$, that is, assumption (iii) in Proposition~\ref{prop:ratetheorem} is satisfied. Therefore we obtain from Proposition~\ref{prop:ratetheorem}
that $T\upharpoonright(\ker\Gamma_0\cap\ker\Gamma_1)$ is a densely defined, closed, symmetric operator in $L^2(\dR^n)$, that $\{ L^2 (\Sigma), \Gamma_0, \Gamma_1\}$
is a quasi boundary triple for its adjoint and that $A_0 = T\upharpoonright\ker\Gamma_0$. In particular, $T\upharpoonright\ker\Gamma_0$ is defined on $H^2(\dR^n)$.
Hence $T\upharpoonright(\ker\Gamma_0\cap\ker\Gamma_1)$ coincides with the symmetric operator $S$ in \eqref{eq:SSchroed}
and $\{ L^2 (\Sigma), \Gamma_0, \Gamma_1\}$ is a quasi boundary triple for $\overline T=S^*$.  It remains to check that the corresponding Weyl function has the form
\eqref{eq:WeylSchreod}. For this let $\lambda \in\rho(A_0)\cap \rho (A_{\rm D, i}) \cap \rho (A_{\rm D, e})$ and let
$u_\lambda=u_{\lambda, \rm i}\oplus u_{\lambda, \rm e} \in \ker (T - \lambda)$, that is, $u_{\lambda, j} \in H^2 (\Omega_j)$, $j = \rm i, e$, $u_{\lambda, \rm i} |_\Sigma = u_{\lambda, \rm e} |_\Sigma$, and $\cL_j u_{\lambda,j}=\lambda u_{\lambda,j}$, $j = \rm i, e$. Then we have
\begin{align}\label{eq:couplingWeyl}
  \bigl(\Lambda_{\rm i} (\lambda) + \Lambda_{\rm e} (\lambda)\bigr)\Gamma_1 u_\lambda
=\frac{\partial u_{\lambda,\rm i}}{\partial \nu_{\cL_{\rm i}}} \Big|_\Sigma+\frac{\partial u_{\lambda,\rm e}}{\partial \nu_{\cL_{\rm e}}} \Big|_\Sigma =\Gamma_0 u_\lambda.
\end{align}
Note further that $\Lambda_{\rm i} (\lambda) + \Lambda_{\rm e} (\lambda)$ is injective for all $\lambda \in\rho(A_0)\cap \rho (A_{\rm D, i}) \cap \rho (A_{\rm D, e})$.
In fact, assume
$\Gamma_1 u_\lambda \in \ker (\Lambda_{\rm i} (\lambda) + \Lambda_{\rm e} (\lambda))$. Then \eqref{eq:couplingWeyl}
implies $u_\lambda \in \ker \Gamma_0 = \dom A_0$, and it follows $u_\lambda \in \ker (A_0 - \lambda)$. Since $\lambda \in \rho (A_0)$
we obtain $u_\lambda = 0$ and, hence, $\Gamma_1 u_\lambda = 0$. Therefore it follows from \eqref{eq:couplingWeyl} that
the Weyl function corresponding to $\{\cG,\Gamma_0,\Gamma_1\}$ coincides with the function $M$ in~\eqref{eq:WeylSchreod}.
\end{proof}

In the next lemma it is shown that $S$ satisfies the local simplicity in the assumptions of the results in Section~\ref{sec:abstr}. 

\begin{lemma}\label{sonnenschein}
Let $A_0$ be the selfadjoint elliptic operator in \eqref{eq:SchroedingerOp} with spectral measure $E (\cdot)$ and let $S$ be the symmetric operator in \eqref{eq:SSchroed}.
Let $\{L^2 (\Sigma), \Gamma_0, \Gamma_1\}$ be the quasi boundary triple in Lemma~\ref{prop:qbtSchreod} and let $\gamma$ be the corresponding $\gamma$-field.
Then
$$ \clsp \bigl\{ E (\delta) \gamma (\nu) g : g \in H^{1/2} (\Sigma),\,\nu \in \C \setminus \R \bigr\} = E (\delta) L^2 (\R^n)$$
holds for every open interval $\delta\subset\dR$ such that $\delta\cap\sigma_{\rm p}(S)=\emptyset$.
\end{lemma}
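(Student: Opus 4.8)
The plan is to show that the closed linear span on the left has trivial orthogonal complement inside $E(\delta)L^2(\R^n)$; since the inclusion $\subset E(\delta)L^2(\R^n)$ is obvious, this yields the asserted equality. So let $u\in E(\delta)L^2(\R^n)$ satisfy $(u,E(\delta)\gamma(\nu)g)=0$ for all $\nu\in\C\setminus\R$ and $g\in H^{1/2}(\Sigma)=\ran\Gamma_0$, and let us deduce $u=0$. Using $u=E(\delta)u$ together with $\gamma(\nu)^*=\Gamma_1(A_0-\overline\nu)^{-1}$ from Lemma~\ref{lem:gammaWeylProp}~(i), the orthogonality becomes $(\gamma(\nu)^*u,g)=0$ for all $g$ in the dense set $\ran\Gamma_0$, whence $\gamma(\nu)^*u=\Gamma_1(A_0-\overline\nu)^{-1}u=0$. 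As $\Gamma_1w=w|_\Sigma$ and $\overline\nu$ runs through $\C\setminus\R$, this says that $v_z:=(A_0-z)^{-1}u$ satisfies $v_z|_\Sigma=0$ for every $z\in\C\setminus\R$; in particular $v_z\in H^2(\R^n)$ has vanishing trace, so $v_z\in\dom S$.

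The next step is to exploit the decoupling. Writing $A_{\rm D}:=A_{\rm D, i}\oplus A_{\rm D, e}$, one checks directly from \eqref{eq:SSchroed} that $S\subset A_{\rm D}$ (the restrictions of $w\in\dom S$ have vanishing trace, hence lie in $\dom A_{\rm D, i}$ and $\dom A_{\rm D, e}$). Thus $v_z\in\dom S\subset\dom A_{\rm D}$ and $(A_{\rm D}-z)v_z=(A_0-z)v_z=u$, so $v_z=(A_{\rm D}-z)^{-1}u$ for all $z\in\C\setminus\R$. Since $\Omega_{\rm i}$ is bounded with smooth boundary, $A_{\rm D, i}$ has compact resolvent, so the interior part $v_z|_{\Omega_{\rm i}}=(A_{\rm D, i}-z)^{-1}u_{\rm i}$ is a meromorphic $H^2(\Omega_{\rm i})$-valued function on all of $\C$, with poles only at the Dirichlet eigenvalues of $\Omega_{\rm i}$. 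Moreover $v_z\in\dom S=\ker\Gamma_0\cap\ker\Gamma_1$ gives $\frac{\partial v_z}{\partial\nu_{\cL_{\rm e}}}\big|_\Sigma=-\frac{\partial v_z}{\partial\nu_{\cL_{\rm i}}}\big|_\Sigma$, so the exterior conormal trace is meromorphic on $\C$ as well.

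The heart of the argument, and the step I expect to be the main obstacle, is to show that $\mu_u:=(E(\cdot)u,u)$ has no absolutely continuous and no singular continuous part inside $\delta$. The mechanism is unique continuation: fix $x\in\delta$ that is not a Dirichlet eigenvalue of $\Omega_{\rm i}$ and consider the jump of $z\mapsto v_z$ across $x$, i.e. the spectral density of $A_0$ at $x$ applied to $u$. On $\Omega_{\rm i}$ this jump vanishes, because there the interior part continues analytically across $x$; hence the jump is a solution $w$ of $(\cL-x)w=0$ on $\R^n$ which vanishes on the open set $\Omega_{\rm i}$, and by unique continuation for the uniformly elliptic operator $\cL$ on the connected set $\R^n$ it vanishes identically. (Equivalently, one works in $\Omega_{\rm e}$: the jump of $v_z|_{\Omega_{\rm e}}$ has zero Dirichlet trace and, by the meromorphy just established, zero conormal jump on $\Sigma$, hence solves $(\cL_{\rm e}-x)w=0$ with vanishing Cauchy data on $\Sigma$ and so vanishes.) Consequently the continuous part of $\mu_u$ is supported off $\delta$. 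The delicate point is to give these boundary values a rigorous meaning near the real axis — a limiting-absorption/trace-regularity statement for the resolvent — and to justify that the jump is a genuine $H^2_{\rm loc}$-solution to which the unique continuation theorem applies; this is where the analysis is concentrated, and one may localise the estimate or reduce it to the exterior problem on $\Omega_{\rm e}$.

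Finally I would extract the eigenvalues. Once the continuous part is removed, $u=E(\delta)u$ is a sum of eigencomponents $p_k:=E(\{\lambda_k\})u$ of $A_0$ with $\lambda_k\in\delta$. For each such $\lambda_k$ one has $iy\,v_{\lambda_k+iy}=iy(A_0-(\lambda_k+iy))^{-1}u\to-p_k$ in the graph norm of $A_0$ (indeed $A_0(iy\,v_{\lambda_k+iy})=(\lambda_k+iy)(iy\,v_{\lambda_k+iy})+iy\,u\to-\lambda_k p_k$), and since $\dom A_0=H^2(\R^n)$ the convergence holds in $H^2(\R^n)$; as $v_z|_\Sigma=0$ for all $z$, continuity of the trace forces $p_k|_\Sigma=0$. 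Thus $p_k\in H^2(\R^n)$ with vanishing trace, i.e. $p_k\in\dom S$, and $Sp_k=\cL p_k=\lambda_k p_k$. A nonzero $p_k$ would place $\lambda_k\in\sigma_{\rm p}(S)\cap\delta$, contradicting $\delta\cap\sigma_{\rm p}(S)=\emptyset$. Hence every $p_k=0$, so $u=0$, and the claimed identity follows.
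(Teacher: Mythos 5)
Your opening and closing steps are sound, and the reduction is a nice one: from $(u,E(\delta)\gamma(\nu)g)=0$ for all $\nu$ and $g$ you correctly obtain $v_z:=(A_0-z)^{-1}u\in\ker\Gamma_0\cap\ker\Gamma_1=\dom S$, hence $v_z=(A_{\rm D}-z)^{-1}u$ with $A_{\rm D}:=A_{\rm D,i}\oplus A_{\rm D,e}$; and the graph-norm argument giving $E(\{\lambda\})u|_\Sigma=0$, hence $E(\{\lambda\})u\in\ker(S-\lambda)=\{0\}$ for $\lambda\in\delta$, is correct. The genuine gap is exactly where you place it, and it is not a removable technicality: your treatment of the continuous part needs the pointwise boundary values $\lim_{\eps\searrow0}(A_{\rm D,e}-(x\pm i\eps))^{-1}u_{\rm e}$ at real $x$, i.e.\ a limiting absorption principle for the exterior Dirichlet operator. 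The coefficients of $\cL$ are only assumed bounded and smooth, with no decay or structure at infinity, so $A_{\rm D,e}$ may well have dense point spectrum or purely singular continuous spectrum in $\delta$, and these limits need not exist in any topology in which ``the jump is an $H^2_{\rm loc}$-solution of $(\cL-x)w=0$'' is meaningful. Unique continuation cannot be applied to an object that does not exist, and Stone's formula only produces the integrated jump $E_{\rm e}((a,b))u_{\rm e}$, which is a superposition over energies rather than a solution of a single elliptic equation.

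The gap can be closed without any real boundary values, using the ingredient your argument never invokes and on which the paper's proof rests: the simplicity of the decoupled operators $S_{\rm i}$ and $S_{\rm e}$ (\cite[Proposition~2.2]{BR13}, \cite[Proposition~2.5]{BR12}). From $((A_0-z)^{-1}u,w)=((A_{\rm D}-z)^{-1}u,w)$ for all $w$ and the uniqueness of Borel transforms you get $E(B)u=E_{\rm D}(B)u$ for every Borel set $B$; since $\sigma(A_{\rm D,i})$ is discrete, $u_{\rm i}=E_{\rm i}(\delta)u_{\rm i}$ is a sum of terms $E_{\rm i}(\{\lambda_k\})u_{\rm i}$, each being the interior component of $E(\{\lambda_k\})u=0$, so $u_{\rm i}=0$. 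The transmission condition $\Gamma_0 v_z=0$ then forces $\frac{\partial}{\partial\nu_{\cL_{\rm e}}}(A_{\rm D,e}-z)^{-1}u_{\rm e}\big|_\Sigma=0$, i.e.\ $\gamma_{\rm e}(\overline z)^*u_{\rm e}=0$ for all non-real $z$, and the simplicity of $S_{\rm e}$ yields $u_{\rm e}=0$. Note that this is a different architecture from the paper's: rather than passing to the orthogonal complement, the paper shows directly that the decoupled defect elements $\gamma_{\rm i}(\mu)g\oplus0$ and $0\oplus\gamma_{\rm e}(\nu)h$, whose span is dense by the simplicity of $S_{\rm i}$ and $S_{\rm e}$, land in $\clsp\{E(\delta)\gamma(\nu)g\}$ after applying $E(\delta)$, using Stone's formula on the gaps of $\sigma(A_{\rm D,i})$ and a separate analysis at its eigenvalues.
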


\begin{proof}
For $j = \rm i, e$ we consider the densely defined, closed, symmetric operators
\begin{align*}
 S_j u_j = \cL_j u_j, \quad \dom S_j = \bigg\{ u_j \in H^2 (\Omega_j) : u_j |_\Sigma = \frac{\partial u_j}{\partial \nu_{\cL_j}} \Big|_\Sigma = 0
 \bigg\},
\end{align*}
in $L^2 (\Omega_j)$ and the operators
\begin{align*}
 T_j u_j = \cL_j u_j, \quad \dom T_j = H^2 (\Omega_j),
\end{align*}
in $L^2 (\Omega_j)$. It is not difficult to verify that $\{L^2(\Sigma),\Gamma_0^j,\Gamma_1^j\}$, where
\begin{align*}
 \Gamma_0^j, \Gamma_1^j : \dom T_j \to L^2 (\Sigma),\quad \Gamma_0^j u_j = u_j |_{\Sigma}, \quad \Gamma_1^j u_j = - \frac{\partial u_j}{\partial \nu_{\cL_j}} \Big|_\Sigma,
\end{align*}
is a quasi boundary triple for $S_j^*$, $j = \rm i, e$; cf. \cite[Proposition 4.1]{BL07}.
For $\lambda \in \rho (A_{\rm D, j})$, $j = \rm i, e$, the corresponding $\gamma$-fields are given by
\begin{align*}
 \gamma_j (\lambda):L^2(\Sigma)\supset H^{3/2}(\Sigma)\rightarrow L^2(\Omega_j),\qquad \varphi \mapsto \gamma_j (\lambda)\varphi= u_{\lambda,j},
\end{align*}
where $u_{\lambda,j}$ is the unique solution in $H^2(\Omega_j)$ of $\cL_j u_j=\lambda u_j$, $u_j\vert_\Sigma=\varphi$. It follows in the same way as in
\cite[Proposition 2.2]{BR13} that $S_{\rm e}$ is simple; the simplicity of $S_{\rm i}$ follows from a unique continuation argument, see, e.g. \cite[Proposition 2.5]{BR12}.
Therefore we have
\begin{align*}
 L^2 (\Omega_j) = \clsp \bigl\{ \gamma_j (\nu) g : g \in H^{3/2} (\Sigma), \, \nu \in \C \setminus \R \bigr\}, \quad j = \rm i, e,
\end{align*}
and hence
\begin{align}\label{eq:gammaDensElliptic2}
\begin{split}
 L^2 (\R^n) &= L^2 (\Omega_{\rm i})\oplus L^2 (\Omega_{\rm e})\\
            &=\clsp \bigl\{ \gamma_{\rm i} (\mu) g\oplus \gamma_{\rm e} (\nu)h : g,h \in H^{3/2} (\Sigma), \, \mu, \nu \in \C \setminus \R \bigr\}.
\end{split}
 \end{align}
Here and in the following $\oplus$ denotes the orthogonality of the closed subspaces $L^2 (\Omega_{\rm i})$ and $L^2 (\Omega_{\rm e})$ in  $L^2 (\R^n)$.

Let now $\delta\subset\dR$ be an open interval such that $\delta\cap\sigma_{\rm p}(S)=\emptyset$ and let $T$ be as in \eqref{eq:TSchreod}. Since
\begin{equation}\label{neuesfeld}
\bigl\{\gamma_{\rm i} (\nu) g \oplus \gamma_{\rm e}(\nu)g: g \in H^{3/2} (\Sigma)\bigr\} = \ker (T - \nu) = \ran \gamma (\nu),\qquad \nu \in \C \setminus \R,
\end{equation}
we have to verify that
\begin{equation*}
\cH_\delta := \clsp \big\{ E (\delta) (\gamma_{\rm i} (\nu) g\oplus \gamma_{\rm e} (\nu)g) : g \in H^{3/2} (\Sigma) , \,\nu \in \C \setminus \R \big\} = E (\delta) L^2 (\R^n).
\end{equation*}
We note first that the inclusion $\cH_\delta\subset E (\delta) L^2 (\R^n)$ is obviously true.
For the opposite inclusion we conclude from \eqref{eq:gammaDensElliptic2} that it suffices to verify
\begin{equation}\label{simplificationSimpleSchroed}
 \begin{split}
  E (\delta) (\gamma_{\rm i} (\mu) g \oplus 0) \in \cH_\delta,&\qquad g \in H^{3/2} (\Sigma), \, \mu \in \C \setminus \R,\\
  E (\delta) (0 \oplus \gamma_{\rm e} (\nu) h ) \in \cH_\delta,&\qquad h \in H^{3/2} (\Sigma), \, \nu \in \C \setminus \R.
 \end{split}
\end{equation}

Let us show the statements in~\eqref{simplificationSimpleSchroed}. We start with the second one.
Let us fix $\mu \in \C \setminus \R$. By Lemma~\ref{lem:gammaWeylProp}~(ii) we have
\begin{align*}
 \gamma_j (\nu) h = \big( I + (\nu - \mu) (A_{{\rm D}, j} - \nu)^{-1} \big) \gamma_j (\mu) h, \quad h \in H^{3/2} (\Sigma),\, \nu \in \C \setminus \R,
\end{align*}
$j = \rm i, e$. From this it follows
\begin{equation*}
 \begin{split}
 \cH_\delta & = \clsp \big\{ E (\delta) (\gamma_{\rm i} (\nu) h\oplus \gamma_{\rm e} (\nu)h) : h \in H^{3/2} (\Sigma) , \,\nu \in \C \setminus \R \big\}\\
 & = \clsp \Big\{ E (\delta) (\gamma_{\rm i} (\mu) h \oplus \gamma_{\rm e} (\mu) h), \\
 & \quad E (\delta) \left((A_{\rm D, i} - \nu)^{-1} \gamma_{\rm i} (\mu) h \oplus  (A_{\rm D, e} - \nu)^{-1} \gamma_{\rm e} (\mu) h \right) : h \in H^{3/2} (\Sigma), \,\nu \in \C \setminus \R \Big\}.
 \end{split}
\end{equation*}
Since $A_{\rm D, i}$ and $A_{\rm D, e}$ are both semibounded from below we may choose $\lambda_0 \in \R$ such that
$\sigma (A_{{\rm D}, j}) \subset (\lambda_0, \infty)$, $j = \rm i, e$. Recall that the spectrum of $A_{\rm D, i}$ is purely discrete and
let $\lambda_1 < \lambda_2 < \dots$ be the distinct eigenvalues of $A_{\rm D, i}$. Then for all $\eta, \eps > 0$ and $k=0, 1, 2,\dots$ the function
\begin{align*}
 E (\delta) \Bigg[\int_{\lambda_k + \eta}^{\lambda_{k + 1} - \eta} & \big( (A_{\rm D, i} - (\lambda + i \eps) )^{-1} - (A_{\rm D, i} - (\lambda - i \eps) )^{-1} \big)
 \gamma_{\rm i} (\mu) h\, d\lambda \\
 &  \oplus \int_{\lambda_k + \eta}^{\lambda_{k + 1} - \eta} \big( (A_{\rm D, e} - (\lambda + i \eps) )^{-1} - (A_{\rm D, e} - (\lambda - i \eps) )^{-1} \big)
 \gamma_{\rm e} (\mu) h \,d \lambda\Bigg]
\end{align*}
belongs to $\cH_\delta$, and as $(\lambda_k,\lambda_{k+1})\subset\rho(A_{\rm D, i})$, Stone's formula implies
\begin{align}\label{eq:defectExpr}
 E (\delta) \bigl(0 \oplus E_{\rm e} ((\lambda_k, \lambda_{k + 1})) \gamma_{\rm e} (\mu) h \bigr) \in \cH_\delta,
\end{align}
where $E_{\rm e} (\cdot)$ is the spectral measure of $A_{\rm D, e}$.
Next we show that for the eigenvalues $\lambda_k$, $k=1,2,\dots,$ of $A_{\rm D, i}$ the property
\begin{align}\label{eq:defectExpr3}
 E (\delta) \bigl(0 \oplus E_{\rm e} (\{\lambda_k\}) \gamma_{\rm e} (\mu) h \bigr) \in \cH_\delta
\end{align}
holds. For this consider the element
$$u = 0 \oplus E_{\rm e} (\{\lambda_k\}) \gamma_{\rm e} (\mu) h$$
for some fixed $h\in H^{3/2} (\Sigma)$. Clearly, as
$u \in \ker ((A_{\rm D, i} \oplus A_{\rm D, e}) - \lambda_k)$ and as $A_{\rm D, i} \oplus A_{\rm D, e}$ is a selfadjoint extension
of the symmetric operator $S$ in \eqref{eq:SSchroed} we may write
$u$ in the form $u=u_{\rm D}\widetilde\oplus u_S$ with $u_S\in\ker(S-\lambda_k)$ and
\begin{equation}\label{ud}
u_{\rm D}\in\ker\bigl((A_{\rm D, i} \oplus A_{\rm D, e}) - \lambda_k\bigr)\widetilde\ominus\ker(S-\lambda_k),
\end{equation}
where $\widetilde\oplus$ and $\widetilde \ominus$ indicate the orthogonality of subspaces in $\ker ((A_{\rm D, i} \oplus A_{\rm D, e}) - \lambda_k)$.
Then
for each $v \in \bigcap_{\nu \in \C \setminus \R} \ran (S - \nu)$ and each $\nu \in \C \setminus \R$ one has
\begin{align}\label{eq:calc}
\begin{split}
 (v,u_{\rm D}) & = ( (S - \nu) (S -  \nu)^{-1} v,u_{\rm D})
 = \bigl((S -  \nu)^{-1} v,((A_{\rm D, i} \oplus A_{\rm D, e}) - \overline\nu) u_{\rm D}\bigr) \\
 & = (\lambda_k - \nu) ((S -  \nu)^{-1} v,u_{\rm D}).
\end{split}
 \end{align}
Since the limit
$$y:=\lim_{\eta \searrow 0} \eta \bigl(S - (\lambda_k + i \eta)\bigr)^{-1} v=
\lim_{\eta \searrow 0} \eta \bigl((A_{\rm D, i} \oplus A_{\rm D, e} )- (\lambda_k + i \eta)\bigr)^{-1} v$$
exists and
\begin{equation*}
\begin{split}
\bigl(y,(S^*-\lambda_k)w \bigr)
 &=\lim_{\eta\searrow 0}\eta\bigl(\bigl(S - (\lambda_k + i \eta)\bigr)^{-1} v,(S^*-\lambda_k)w\bigr)\\
 &=\lim_{\eta\searrow 0}\eta\bigl((S-\lambda_k)\bigl(S - (\lambda_k + i \eta)\bigr)^{-1} v,w\bigr)\\
 &=\lim_{\eta\searrow 0}\eta\bigl[(v,w)+\bigl(i\eta\bigl(S - (\lambda_k + i \eta)\bigr)^{-1} v,w\bigr)\bigr]=0
\end{split}
\end{equation*}
holds for all $w \in \dom S^*$ we conclude that
$$y=\lim_{\eta \searrow 0} \eta \bigl(S - (\lambda_k + i \eta)\bigr)^{-1} v\in \bigl(\ran (S^*-\lambda_k)\bigr)^\bot=\ker(S-\lambda_k).$$
In particular, \eqref{ud} implies $(y,u_{\rm D})=0$.
Therefore we obtain from the identity~\eqref{eq:calc} with
$\nu = \lambda_k + i \eta$ in the limit
\begin{equation*}
 (v,u_{\rm D})=-i\,\lim_{\eta \searrow 0}\eta \bigl(\bigl(S - (\lambda_k + i \eta)\bigr)^{-1} v,u_{\rm D}\bigr)=-i(y,u_{\rm D})=0.
\end{equation*}
This shows that
$u_{\rm D}$ is orthogonal to $\bigcap_{\nu \in \C \setminus \R} \ran (S - \nu)$
and hence
\begin{equation*}
u_{\rm D}  \in \clsp \bigl\{ \ker (S^* - \nu) : \nu \in \C \setminus \R \bigr\}= \clsp \bigl\{ \ker (T - \nu) : \nu \in \C \setminus \R \bigr\}.
\end{equation*}
Therefore \eqref{neuesfeld} implies
\begin{equation}\label{eq:uD}
u_{\rm D}  \in \clsp \bigl\{ \gamma_{\rm i} (\nu) h \oplus \gamma_{\rm e}(\nu)h : h \in H^{3/2} (\Sigma),\, \nu \in \C \setminus \R\bigr\}.
\end{equation}
Note that if the eigenvalue $\lambda_k$ of $A_{\rm D, i}$ is contained in the interval $\delta$ then by assumption $\lambda_k\not\in\sigma_{\rm p}(S)$ and hence
$u=u_{\rm D}$ in this case. If $\lambda_k\not\in\delta$ then $u_S \in \ker (S - \lambda_k) \subset \ker (A_0 - \lambda_k)$ implies that $u_S$ is
orthogonal to $\ran E (\delta)$, so that $E (\delta) u_S=0$.
Summing up we have for any eigenvalue $\lambda_k$, $k=1,2,\dots$, of $A_{\rm D, i}$ that
\begin{equation*}
E (\delta) \bigl(0 \oplus E_{\rm e} (\{\lambda_k\}) \gamma_{\rm e} (\mu) h\bigr) = E ( \delta) u = E ( \delta) (u_S \widetilde\oplus u_{\rm D}) =
E (\delta) u_{\rm D} \in \cH_\delta
\end{equation*}
by~\eqref{eq:uD}. We have shown \eqref{eq:defectExpr3}.

Let $m\in\N$. Then we have
$$
E_{\rm e}((-\infty,\lambda_m))\gamma_{\rm e} (\mu) h= \sum_{k=1}^{m-1} E_{\rm e}(\{\lambda_k\})\gamma_{\rm e} (\mu) h+\sum_{k=0}^{m-1}
E_{\rm e}((\lambda_k,\lambda_{k+1}))\gamma_{\rm e} (\mu) h
$$
and from \eqref{eq:defectExpr} and \eqref{eq:defectExpr3} we conclude
\begin{align*}
 E (\delta) \big( 0 \oplus E_{\rm e} (-\infty, \lambda_m) \gamma_{\rm e} (\mu) h \big) \in \cH_\delta.
\end{align*}
Taking the limit $m \nearrow + \infty$ we obtain $E (\delta) (0 \oplus \gamma_{\rm e} (\mu) h) \in \cH_\delta$. We have proved the second statement in
\eqref{simplificationSimpleSchroed}.

For the first statement in \eqref{simplificationSimpleSchroed} observe that for $\mu\in\C\setminus\R$ fixed, $g\in H^{3/2}(\Sigma)$ and
$k=1,2,\dots$
\begin{align*}
 E (\delta) \bigl(E_{\rm i} (\{\lambda_k\}) \gamma_{\rm i} (\mu) g \oplus 0 \bigr) \in \cH_\delta
\end{align*}
can be verified in the same way as \eqref{eq:defectExpr3}, where $E_{\rm i} (\cdot)$ is the spectral measure of $A_{\rm D, i}$. Hence
for $m\in\N$ we conclude
\begin{align*}
 E (\delta) \bigl(E_{\rm i} ((-\infty,\lambda_m)) \gamma_{\rm i} (\mu) g \oplus 0 \bigr) \in \cH_\delta
\end{align*}
and in the limit $m \nearrow + \infty$ we obtain the first statement in \eqref{simplificationSimpleSchroed}. 

Now \eqref{simplificationSimpleSchroed} together with \eqref{eq:gammaDensElliptic2} imply the inclusion $E(\delta) L^2(\dR^n)\subset \cH_\delta$. This completes 
the proof of Lemma~\ref{sonnenschein}.
\end{proof}

As a consequence of Lemma~\ref{sonnenschein} we obtain the following corollary.

\begin{corollary}\label{simplecor}
The operator $S$ in \eqref{eq:SSchroed} is simple if and only if $\sigma_{\rm p}(S)=\emptyset$.
\end{corollary}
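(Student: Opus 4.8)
The plan is to derive both implications from the local simplicity machinery of Section~\ref{simplesec}, using the key fact that $S$ is simple exactly when \eqref{eq:simple2} holds, which in turn is precisely condition \eqref{eq:localSimple'} for $\Delta = \R$ (since then $E(\R) = I$).

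The implication ``$S$ simple $\Rightarrow \sigma_{\rm p}(S)=\emptyset$'' is immediate from the abstract lemmas. If $S$ is simple, then by Lemma~\ref{simplelemma}~(i) the condition \eqref{eq:localSimple'} holds for every Borel set, in particular for $\Delta = \R$, and Lemma~\ref{simplelemma}~(iv) then yields $\R\cap\sigma_{\rm p}(S)=\emptyset$. Since $S$ is symmetric, all of its eigenvalues are real, so $\sigma_{\rm p}(S)\subset\R$ and hence $\sigma_{\rm p}(S)=\emptyset$.

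For the converse ``$\sigma_{\rm p}(S)=\emptyset \Rightarrow S$ simple'', the strategy is to glue the interval-wise statement of Lemma~\ref{sonnenschein} into the global identity \eqref{eq:simple2}. Since $\sigma_{\rm p}(S)=\emptyset$, every open interval $\delta$ satisfies $\delta\cap\sigma_{\rm p}(S)=\emptyset$, so Lemma~\ref{sonnenschein}, together with $\ran\Gamma_0 = H^{1/2}(\Sigma)$ from Lemma~\ref{prop:qbtSchreod}, shows that the hypothesis \eqref{eq:zerlegungSimple} of Lemma~\ref{simplelemma}~(iii) is met by any countable disjoint family of open intervals. I would then partition $\R$ (up to a countable set of endpoints) into disjoint open intervals $\delta_j = (c_j, c_{j+1})$ associated with an increasing bi-infinite grid $(c_j)_{j\in\dZ}$ with $c_j\to\pm\infty$, and apply Lemma~\ref{simplelemma}~(iii) to obtain \eqref{eq:localSimple'} for $\Delta = \R\setminus C$, where $C = \{c_j : j\in\dZ\}$.

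The main obstacle is the familiar point that $\R$ is not a disjoint union of open intervals, so one is always left with the countable endpoint set $C$ and, a priori, $E(\R\setminus C)\neq I$. The decisive observation resolving this is that $\sigma_{\rm p}(A_0)$ is at most countable, because $A_0$ is selfadjoint on the separable space $L^2(\R^n)$; hence the grid $(c_j)$ can be chosen inside $\R\setminus\sigma_{\rm p}(A_0)$. Then $E(\{c_j\}) = 0$ for every $j$, so $E(C) = 0$ and $E(\R\setminus C) = I$. Feeding this into \eqref{eq:localSimple'} for $\Delta = \R\setminus C$ collapses it to the simplicity condition \eqref{eq:simple2}, completing the proof. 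I expect this choice of grid avoiding $\sigma_{\rm p}(A_0)$ and the verification $E(C)=0$ to be the only genuinely substantive step; the remainder is a direct assembly of Lemmas~\ref{prop:qbtSchreod}, \ref{sonnenschein}, and~\ref{simplelemma}.
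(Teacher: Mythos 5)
Your proof is correct, and the forward direction coincides with the paper's (Lemma~\ref{simplelemma}~(i) and~(iv)). For the converse, however, you have manufactured an obstacle that is not there: the paper treats $\R=(-\infty,+\infty)$ itself as an open interval (see, e.g., Theorem~\ref{thm:ACtheorem} and Corollary~\ref{thm:ACcomplete2}, which explicitly include the case $\Delta=\R$), and nothing in the proof of Lemma~\ref{sonnenschein} requires $\delta$ to be bounded. Hence, once $\sigma_{\rm p}(S)=\emptyset$, you may apply Lemma~\ref{sonnenschein} directly with $\delta=\R$; since $E(\R)=I$, the resulting identity is exactly the simplicity condition \eqref{eq:simple2} (using $\ran\Gamma_0=H^{1/2}(\Sigma)$ from Lemma~\ref{prop:qbtSchreod}), and the corollary follows in one line --- which is the paper's intended argument. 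Your detour --- choosing a bi-infinite grid $(c_j)$ inside $\R\setminus\sigma_{\rm p}(A_0)$ (possible since $\sigma_{\rm p}(A_0)$ is countable by separability of $L^2(\R^n)$), invoking Lemma~\ref{sonnenschein} on each $(c_j,c_{j+1})$, gluing with Lemma~\ref{simplelemma}~(iii), and checking $E(C)=0$ so that $E(\R\setminus C)=I$ --- is sound in every step, and it would be exactly the right strategy had Lemma~\ref{sonnenschein} been established only for bounded intervals; as things stand, it buys nothing over the direct application, at the cost of an extra countability argument and the gluing lemma.
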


\begin{proof}[{\bf Proof of Theorem~\ref{thm:eigenSchroed}}]
Let $\{L^2 (\Sigma), \Gamma_0, \Gamma_1 \}$ be the quasi boundary triple for $\overline T = S^*$ in Lemma~\ref{prop:qbtSchreod}. Then 
$T \upharpoonright \ker \Gamma_0$ corresponds to the selfadjoint elliptic differential operator $A_0$ in~\eqref{eq:SchroedingerOp} and 
the associated Weyl function coincides with the operator function $M$ in~\eqref{eq:WeylSchreod}. Taking Lemma~\ref{sonnenschein} into account, 
item (i) follows from Corollary~\ref{thm:eigen} and items (ii)-(iv) are 
consequences of Theorem~\ref{thm:specTotal} and Proposition~\ref{prop:isolatedEV} when choosing an open interval $\delta \ni \lambda$ 
with $\delta \cap \sigma_{\rm p} (S) = \emptyset$. Moreover, item~(v) follows from Theorem~\ref{thm:ACtheorem} and Corollary~\ref{cor:ACequiv}, 
and item~(vi) is due to Theorem~\ref{thm:SCtheorem} and Corollary~\ref{cor:SCcor}.
\end{proof}

We point out that in the case that the symmetric operator $S$ is simple the assertions in Theorem~\ref{thm:eigenSchroed} hold for
all $\lambda,\mu\in\dR$. On the other hand, without further assumptions, it may happen that $S$ possesses eigenvalues.
In this case at least the parts of the eigenspaces of $A$ which do not belong to $S$ can be characterized
in terms of the function $M$; cf.~Theorem~\ref{thm:eigenGeneral}. The next examples illustrate
that a proper choice of the interface $\Sigma$ may avoid eigenvalues of $S$.

\begin{example}\label{ex:simple}
Assume that $\cL$ equals the Laplacian outside some compact set $K \subset \R^n$ and choose $\Sigma$ to be the boundary of any smooth, 
bounded domain $\Omega_{\rm i} \supset K$. Then $S$ does not 
have any eigenvalues. Indeed, if $u \in H^2 (\R^n)$ satisfies $\cL u = \lambda u$ on $\R^n$ and $u |_\Sigma = 0$ then
$u |_{\Omega_{\rm e}}$ belongs to $\ker (A_{\rm D, e} - \lambda)$ and must vanish. Then a unique continuation argument implies $u = 0$.
Hence $S$ is simple by Corollary~\ref{simplecor} and the assertions in Theorem~\ref{thm:eigenSchroed} hold for all $\lambda,\mu\in\dR$. 
\end{example}

\begin{example}\label{ex:simple2}
Let the coefficients of $\cL$ be chosen in a way such that for some bounded, smooth domain $\Omega_{\rm i} \subset \R^n$ the operator
$A_{\rm D,i}$ in $L^2 (\Omega_{\rm i})$ is strictly positive; for instance this happens if $- \frac{2}{E} \sum_{j = 0}^n \|a_j\|_\infty^2 + \inf a \geq 0$ 
on $\Omega_{\rm i}$, 
where $E$ is an ellipticity constant for $\cL$, see~\eqref{eq:elliptic}. If we choose $\Sigma = \partial \Omega_{\rm i}$ then $S$ has no non-positive eigenvalues,
otherwise $S u = \lambda u$ for some $\lambda \leq 0$ and $u \in \dom S$ with $u \neq 0$, and a unique
continuation argument yields that $u_{\rm i}$ is nontrivial, thus $u_{\rm i}$ is an eigenfunction of $A_{\rm D, i}$ corresponding to
the eigenvalue $\lambda \leq 0$, a contradiction. Hence in this situation all non-positive eigenvalues of $A_0$
and the corresponding eigenspaces can be described completely in terms of the function $M$.
\end{example}

\subsection{A block operator matrix Weyl function associated with a decoupled system}\label{42}

In this section we consider a different Weyl function for the operator $A_0$, which corresponds to a
symmetric operator which is always simple, independently of the choice of the interface $\Sigma$. This symmetric operator is the orthogonal sum of the minimal symmetric realizations $S_{\rm i}$ and $S_{\rm e}$ of $\cL$ in $L^2 (\Omega_{\rm i})$ and $L^2 (\Omega_{\rm e})$, respectively,
in the proof of Lemma~\ref{sonnenschein}, and hence an infinite dimensional restriction of the symmetric operator in \eqref{eq:SSchroed};
it can be viewed as a decoupled symmetric operator.
Let $\Lambda_{\rm i}$ and $\Lambda_{\rm e}$ be the Dirichlet-to-Neumann maps for the interior and exterior elliptic boundary value problem, 
respectively, defined in~\eqref{eq:DNie}, and let
 \begin{equation*}
 A_{\rm N, e} u_{\rm e} = \cL_{\rm e} u_{\rm e}, \quad \dom A_{{\rm N,e}} = 
 \left\{ u_{\rm e} \in H^2 (\Omega_{\rm e}) : \frac{\partial u_{\rm e}}{\partial \nu_{\cL_{\rm e}}} \Big|_\Sigma= 0 \right\},
\end{equation*}
be the selfadjoint 
realization of $\cL_{\rm e}$ in $L^2(\Omega_{\rm e})$
with Neumann boundary conditions. In Lemma~\ref{prop:qbtSchreod3} below it will turn out that the function
\begin{equation}\label{widetildem}
 \lambda \mapsto \widetilde M(\lambda)= \begin{pmatrix} \Lambda_{\rm i}(\lambda) & 1 \\ 1 & -\Lambda_{\rm e}(\lambda)^{-1}\end{pmatrix}^{-1} \quad \text{in}~L^2 (\Sigma) \times L^2 (\Sigma)
\end{equation}
is well defined on $\rho(A_0)\cap \rho (A_{\rm D, i}) \cap \rho (A_{\rm N, e})$ and can be viewed as the Weyl function of
a quasi boundary triple for $(S_{\rm i} \oplus S_{\rm e})^*$, where $A_0$ in~\eqref{eq:SchroedingerOp} corresponds to the kernel of the
first boundary mapping.
We mention that a scalar analog of the function $\widetilde M$ in \eqref{widetildem} appears in connection with $\lambda$-dependent
Sturm--Liouville boundary value problems in  \cite{DLS87}
and in more general abstract form in \cite{DHMS00}, see also \cite{BLT13} for more details and references.

In the present setting Lemma~\ref{prop:qbtSchreod3} and Lemma~\ref{simpleagain} below
combined with the results in Section~\ref{sec:abstr} lead to an improvement of items (i)-(iv) in Theorem~\ref{thm:eigenSchroed}.
The assertions (v) and (vi) in Theorem~\ref{thm:eigenSchroed} remain valid with $M$ and $H^{1/2}(\Sigma)$ replaced by
$\widetilde M$ and $H^{1/2}(\Sigma)\times H^{3/2}(\Sigma)$, respectively, but will not be formulated again.

\begin{theorem}\label{thm:eigenSchroedDecoup}
Let $A_0$, $\Sigma$, and $\widetilde M$ be as above and let $\lambda\in\dR$.
Then the following assertions hold.
\begin{enumerate}
 \item $\lambda \in\sigma_{\rm p}(A_0)$ if and only if $R_\lambda \widetilde M:=$ \textup{s}-$\lim_{\eta \searrow 0} i \eta \widetilde M
 (\lambda + i \eta) \neq 0$; if the multiplicity of the eigenvalue $\lambda$ is finite then the mapping
 \begin{align}\label{eq:tauSchroeddeltaq}
  \tau : \ker (A_0 - \lambda) \to \ran R_\lambda \widetilde M, \quad u \mapsto
  \begin{pmatrix} u_{\rm i} |_\Sigma\\ \frac{\partial u_{\rm e}}{\partial \nu_{\cL_{\rm e}}} \big|_\Sigma\end{pmatrix},
 \end{align}
 is bijective; if the multiplicity of the eigenvalue $\lambda$ is infinite then the mapping
 \begin{align}\label{eq:taugenSchroeddeltaq}
  \tau : \ker (A_0 - \lambda) \to \cl_\tau \bigl( \ran R_\lambda \widetilde M\bigr), \quad u \mapsto
  \begin{pmatrix} u_{\rm i} |_\Sigma\\ \frac{\partial u_{\rm e}}{\partial \nu_{\cL_{\rm e}}} \big|_\Sigma\end{pmatrix},
 \end{align}
 is bijective, where $\cl_\tau$ denotes the closure in the normed space $\ran \tau$.
 \item $\lambda$ is an isolated eigenvalue of $A_0$ if and only if $\lambda$ is a pole in the strong sense of~$\widetilde M$.
 In this case \eqref{eq:tauSchroeddeltaq} and~\eqref{eq:taugenSchroeddeltaq} hold with $R_\lambda \widetilde M = \Res_\lambda \widetilde M$.
 \item $\lambda\in\rho(A_0)$ if and only if $\widetilde M$ can be continued analytically into $\lambda$.
 \item $\lambda\in\sigma_{\rm c}(A_0)$ if and only if \textup{s}-$\lim_{\eta \searrow 0} i \eta \widetilde M (\lambda + i \eta) = 0$ and
 $\widetilde M$
 cannot be continued analytically into $\lambda$.
\end{enumerate}
\end{theorem}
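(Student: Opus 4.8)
The plan is to obtain Theorem~\ref{thm:eigenSchroedDecoup} as a direct specialization of the abstract results of Section~\ref{sec:abstr} to the quasi boundary triple furnished by Lemma~\ref{prop:qbtSchreod3}. The essential point---and the reason all four assertions now hold for \emph{every} $\lambda\in\dR$, without the exceptional set $\overline{\sigma_{\rm p}(S)}$ that constrained Theorem~\ref{thm:eigenSchroed}---is that the underlying symmetric operator here is the decoupled operator $S_{\rm i}\oplus S_{\rm e}$, which is simple.

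First I would invoke Lemma~\ref{prop:qbtSchreod3} to fix a quasi boundary triple $\{L^2(\Sigma)\times L^2(\Sigma),\widetilde\Gamma_0,\widetilde\Gamma_1\}$ for $(S_{\rm i}\oplus S_{\rm e})^*$ whose associated Weyl function is $\widetilde M$ in~\eqref{widetildem} and for which $A_0$ in~\eqref{eq:SchroedingerOp} is the restriction of the maximal operator to $\ker\widetilde\Gamma_0$. Crucially, the second boundary mapping has the form
\[
 \widetilde\Gamma_1 u=\begin{pmatrix} u_{\rm i}|_\Sigma\\ \frac{\partial u_{\rm e}}{\partial\nu_{\cL_{\rm e}}}\big|_\Sigma\end{pmatrix},\qquad u=u_{\rm i}\oplus u_{\rm e},
\]
so that the abstract map $u\mapsto\widetilde\Gamma_1 u$ appearing in Corollary~\ref{thm:eigen} coincides verbatim with the map $\tau$ in~\eqref{eq:tauSchroeddeltaq} and~\eqref{eq:taugenSchroeddeltaq}. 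Next I would invoke Lemma~\ref{simpleagain}, which asserts that $S_{\rm i}\oplus S_{\rm e}$ is simple; by Lemma~\ref{simplelemma}~(i) the local simplicity condition~\eqref{eq:localSimple} is therefore satisfied with $\Delta=\dR$, and by Lemma~\ref{simplelemma}~(iv) the operator $S_{\rm i}\oplus S_{\rm e}$ has no eigenvalues at all.

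With these two facts in hand the proof is a matter of quoting the abstract theorems. Assertion~(i), including the bijectivity of $\tau$ and the distinction between finite and infinite multiplicity, is Corollary~\ref{thm:eigen} applied to the present triple: since $S_{\rm i}\oplus S_{\rm e}$ has empty point spectrum, the hypothesis that $\lambda$ not be an eigenvalue of the symmetric operator is automatic for every $\lambda\in\dR$, which is exactly what removes the restriction present in Theorem~\ref{thm:eigenSchroed}~(i). Assertions~(iii) and~(iv) follow from Theorem~\ref{thm:specTotal}~(i) and~(ii) applied on $\Delta=\dR$ (the ``$S$ simple'' case of that theorem), and assertion~(ii) follows from Proposition~\ref{prop:isolatedEV}, which under~\eqref{eq:localSimple} identifies the isolated eigenvalues of $A_0$ with the first-order poles of $\widetilde M$ in the strong sense and yields $R_\lambda\widetilde M=\Res_\lambda\widetilde M$.

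I expect no genuine obstacle at the level of the theorem itself; the substance is carried entirely by the two preparatory lemmas, in complete analogy with how Lemma~\ref{prop:qbtSchreod} and Lemma~\ref{sonnenschein} drive the proof of Theorem~\ref{thm:eigenSchroed}. The only point requiring care is bookkeeping: confirming that the concrete boundary mapping $\widetilde\Gamma_1$ of Lemma~\ref{prop:qbtSchreod3} indeed produces the column vector displayed above, so that the abstract $\tau$ of Corollary~\ref{thm:eigen} and Theorem~\ref{thm:eigenGeneral} matches the one stated here. Once that identification is in place, the four items transcribe directly from their abstract counterparts.
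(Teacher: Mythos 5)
Your proposal is correct and follows exactly the paper's own route: the proof there likewise cites Lemma~\ref{prop:qbtSchreod3} for the quasi boundary triple with Weyl function $\widetilde M$ and $A_0=(T_{\rm i}\oplus T_{\rm e})\upharpoonright\ker\widetilde\Gamma_0$, then uses the simplicity of $S_{\rm i}\oplus S_{\rm e}$ from Lemma~\ref{simpleagain} to apply Corollary~\ref{thm:eigen} for item (i) and Theorem~\ref{thm:specTotal} together with Proposition~\ref{prop:isolatedEV} for items (ii)--(iv). Your additional remark identifying $\widetilde\Gamma_1$ with the concrete map $\tau$ is the same bookkeeping the paper leaves implicit.
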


We provide a quasi boundary triple such that $\widetilde M$ in \eqref{widetildem} is the corresponding Weyl function.
As indicated above we make use of the densely defined, closed, symmetric operators
\begin{align*}
 S_j u_j = \cL_j u_j, \quad \dom S_j =
 \bigg\{ u_j \in H^2 (\Omega_j) : u_j |_\Sigma = \frac{\partial u_j}{\partial \nu_{\cL_j}} \Big|_\Sigma = 0 \bigg\},
\end{align*}
in $L^2 (\Omega_j)$ for $j = \rm i, e$, which appeared already the proof of Lemma~\ref{sonnenschein} and which are both simple.
Besides the operators $S_j$ also
the operators
\begin{align*}
 T_j u_j = \cL_j u_j, \quad \dom T_j = H^2 (\Omega_j),
\end{align*}
appear in the formulation of the next lemma.

\begin{lemma}\label{prop:qbtSchreod3}
The triple $\{ L^2 (\Sigma)\times L^2 (\Sigma), \widetilde\Gamma_0, \widetilde\Gamma_1\}$, where
$\widetilde\Gamma_0, \widetilde\Gamma_1 : \dom (T_{\rm i}\oplus T_{\rm e}) \to L^2 (\Sigma)\times L^2 (\Sigma)$ and
\begin{align*}
 \widetilde\Gamma_0 u = \begin{pmatrix}\frac{\partial u_{\rm i}}{\partial \nu_{\cL_{\rm i}}}
 \big|_\Sigma + \frac{\partial u_{\rm e}}{\partial \nu_{\cL_{\rm e}}} \big|_\Sigma \\ u_{\rm i} |_\Sigma-u_{\rm e} |_\Sigma\end{pmatrix} ,
 \quad \widetilde\Gamma_1 u = \begin{pmatrix} u_{\rm i} |_\Sigma\\ \frac{\partial u_{\rm e}}{\partial \nu_{\cL_{\rm e}}} \big|_\Sigma\end{pmatrix},
\end{align*}
is a quasi boundary triple for $S_{\rm i}^*\oplus S_{\rm e}^*$ such that
$(T_{\rm i}\oplus T_{\rm e})\upharpoonright \ker \widetilde\Gamma_0$ coincides with the operator $A_0$ in~\eqref{eq:SchroedingerOp} and $\ran\widetilde\Gamma_0= H^{1/2} (\Sigma)\times H^{3/2}(\Sigma)$.
For all $\lambda \in\rho(A_0)\cap \rho (A_{\rm D, i}) \cap \rho (A_{\rm N, e})$
the corresponding Weyl function coincides with the function $\widetilde M$ in \eqref{widetildem}.
\end{lemma}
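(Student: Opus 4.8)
The plan is to prove Lemma~\ref{prop:qbtSchreod3} by the same route as Lemma~\ref{prop:qbtSchreod}: first verify that $\{L^2(\Sigma)\times L^2(\Sigma),\widetilde\Gamma_0,\widetilde\Gamma_1\}$ is a quasi boundary triple via Proposition~\ref{prop:ratetheorem} applied to $T:=T_{\rm i}\oplus T_{\rm e}$ with $\dom T=H^2(\Omega_{\rm i})\oplus H^2(\Omega_{\rm e})$, then identify the symmetric operator, the selfadjoint restriction and the range of $\widetilde\Gamma_0$, and finally compute the Weyl function. Throughout I write $\alpha_j=u_j|_\Sigma$ and $\beta_j=\frac{\partial u_j}{\partial\nu_{\cL_j}}\big|_\Sigma$ for $u=u_{\rm i}\oplus u_{\rm e}\in\dom T$, so that $\widetilde\Gamma_0 u=(\beta_{\rm i}+\beta_{\rm e},\,\alpha_{\rm i}-\alpha_{\rm e})^\top$ and $\widetilde\Gamma_1 u=(\alpha_{\rm i},\,\beta_{\rm e})^\top$. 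Since $\overline{T_j}=S_j^*$ was already established in the proof of Lemma~\ref{sonnenschein}, one has $\overline T=S_{\rm i}^*\oplus S_{\rm e}^*$.

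First I would check condition~(i) of Proposition~\ref{prop:ratetheorem}. By the classical trace theorem the map $u_j\mapsto(\alpha_j,\beta_j)$ is onto $H^{3/2}(\Sigma)\times H^{1/2}(\Sigma)$, and $(\alpha_{\rm i},\beta_{\rm i})$ and $(\alpha_{\rm e},\beta_{\rm e})$ may be prescribed independently; solving the defining relations shows $\ran\widetilde\Gamma=(H^{1/2}(\Sigma)\times H^{3/2}(\Sigma))\times(H^{3/2}(\Sigma)\times H^{1/2}(\Sigma))$, which is dense in $(L^2(\Sigma)\times L^2(\Sigma))^2$, and in particular $\ran\widetilde\Gamma_0=H^{1/2}(\Sigma)\times H^{3/2}(\Sigma)$. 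Density of $\ker\widetilde\Gamma$ follows from $C_0^\infty(\Omega_{\rm i})\oplus C_0^\infty(\Omega_{\rm e})\subset\ker\widetilde\Gamma$. For condition~(ii) I would add the interior and exterior Green identities as in Lemma~\ref{prop:qbtSchreod}, obtaining $(Tu,v)-(u,Tv)=(\alpha_{\rm i},\beta_{\rm i}')-(\beta_{\rm i},\alpha_{\rm i}')+(\alpha_{\rm e},\beta_{\rm e}')-(\beta_{\rm e},\alpha_{\rm e}')$, and then expand $(\widetilde\Gamma_1 u,\widetilde\Gamma_0 v)-(\widetilde\Gamma_0 u,\widetilde\Gamma_1 v)$ in the $\alpha_j,\beta_j$: the mixed terms $(\alpha_{\rm i},\beta_{\rm e}')$ and $(\beta_{\rm e},\alpha_{\rm i}')$ cancel and the remaining terms reproduce exactly this boundary expression. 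For condition~(iii) I take $A_0$ to be the operator in~\eqref{eq:SchroedingerOp}: the transmission conditions $\alpha_{\rm i}=\alpha_{\rm e}$ and $\beta_{\rm i}+\beta_{\rm e}=0$ (the sign reflecting the opposite normals) characterise $H^2(\R^n)$ inside $H^2(\Omega_{\rm i})\oplus H^2(\Omega_{\rm e})$, so $\dom A_0\subset\ker\widetilde\Gamma_0$. Proposition~\ref{prop:ratetheorem} then gives the quasi boundary triple with $(T_{\rm i}\oplus T_{\rm e})\upharpoonright\ker\widetilde\Gamma_0=A_0$, and computing $\ker\widetilde\Gamma=\ker\widetilde\Gamma_0\cap\ker\widetilde\Gamma_1$ forces $\alpha_{\rm i}=\alpha_{\rm e}=\beta_{\rm i}=\beta_{\rm e}=0$, so $T\upharpoonright\ker\widetilde\Gamma=S_{\rm i}\oplus S_{\rm e}$, as claimed.

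The heart of the proof is the identification of the Weyl function. For $\lambda\in\rho(A_0)\cap\rho(A_{\rm D,i})\cap\rho(A_{\rm N,e})$ and $u_\lambda=u_{\lambda,\rm i}\oplus u_{\lambda,\rm e}\in\ker(T-\lambda)$ I would parametrise the interior solution by its Dirichlet trace $\varphi_{\rm i}=\alpha_{\rm i}$, giving $\beta_{\rm i}=\Lambda_{\rm i}(\lambda)\varphi_{\rm i}$, and the exterior solution by its Neumann trace $\beta_{\rm e}$, giving $\alpha_{\rm e}=\Lambda_{\rm e}(\lambda)^{-1}\beta_{\rm e}$ via the Neumann-to-Dirichlet map (here $\rho(A_{\rm D,i})$ and $\rho(A_{\rm N,e})$ ensure these objects exist). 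Then $\widetilde\Gamma_0 u_\lambda=(\Lambda_{\rm i}(\lambda)\varphi_{\rm i}+\beta_{\rm e},\,\varphi_{\rm i}-\Lambda_{\rm e}(\lambda)^{-1}\beta_{\rm e})^\top$ and $\widetilde\Gamma_1 u_\lambda=(\varphi_{\rm i},\beta_{\rm e})^\top$, and a one-line multiplication shows $\begin{pmatrix}\Lambda_{\rm i}(\lambda)&1\\1&-\Lambda_{\rm e}(\lambda)^{-1}\end{pmatrix}\widetilde\Gamma_1 u_\lambda=\widetilde\Gamma_0 u_\lambda$. Writing $N(\lambda)$ for this block matrix and any $\phi\in\ran\widetilde\Gamma_0$ as $\phi=\widetilde\Gamma_0 u_\lambda$, this yields $N(\lambda)M(\lambda)\phi=N(\lambda)\widetilde\Gamma_1 u_\lambda=\phi$, and symmetrically $M(\lambda)N(\lambda)\psi=\psi$ on $\ran(\widetilde\Gamma_1\upharpoonright\ker(T-\lambda))$; since $\widetilde\Gamma_1\upharpoonright\ker(T-\lambda)$ is injective (zero Dirichlet data inside and zero Neumann data outside force $u_\lambda=0$), $N(\lambda)$ is invertible with $N(\lambda)^{-1}=M(\lambda)$, i.e.\ $\widetilde M(\lambda)=M(\lambda)$.

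The main obstacle is not a hard estimate but the bookkeeping: one must use the asymmetric parametrisation (Dirichlet data inside, Neumann data outside) that makes the block algebra close and matches the stated resolvent intersection, and one must tie the invertibility of the matrix in~\eqref{widetildem} to $\lambda\in\rho(A_0)$. This last point mirrors the injectivity argument for $\Lambda_{\rm i}(\lambda)+\Lambda_{\rm e}(\lambda)$ in Lemma~\ref{prop:qbtSchreod}: if $N(\lambda)(\psi_1,\psi_2)^\top=0$ then $\psi_1=\Lambda_{\rm e}(\lambda)^{-1}\psi_2$ and $\Lambda_{\rm i}(\lambda)\psi_1+\psi_2=0$, so the interior solution with Dirichlet trace $\psi_1$ and the exterior solution with Neumann trace $\psi_2$ have common trace $\psi_1$ and conormal derivatives summing to zero, producing an element of $\ker(A_0-\lambda)$, which vanishes because $\lambda\in\rho(A_0)$, forcing $(\psi_1,\psi_2)=0$. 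Combined with the general existence and boundedness of $M(\lambda)$ from Lemma~\ref{lem:gammaWeylProp}, this establishes that $\widetilde M$ in~\eqref{widetildem} is well defined on $\rho(A_0)\cap\rho(A_{\rm D,i})\cap\rho(A_{\rm N,e})$ and coincides with the Weyl function.
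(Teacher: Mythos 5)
Your proposal is correct and follows essentially the same route as the paper: verification of the hypotheses of Proposition~\ref{prop:ratetheorem} for $T_{\rm i}\oplus T_{\rm e}$ via the trace theorem, density of $C_0^\infty(\dR^n\setminus\Sigma)$ in $\ker\widetilde\Gamma$, Green's identity, and the inclusion $H^2(\dR^n)\subset\ker\widetilde\Gamma_0$, followed by the block-matrix computation $N(\lambda)\widetilde\Gamma_1 u_\lambda=\widetilde\Gamma_0 u_\lambda$ on $\ker(T-\lambda)$. You in fact supply slightly more detail than the paper (the cancellation of the mixed terms in Green's identity and the explicit injectivity argument for $N(\lambda)$ tying its invertibility to $\lambda\in\rho(A_0)$), which the paper leaves to the reader.
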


\begin{proof}
The proof of Lemma~\ref{prop:qbtSchreod3} follows the same strategy as the proof of Lemma~\ref{prop:qbtSchreod} and some details are left
to the reader. Well known properties of traces of $H^2$-functions yield
$$\ran(\widetilde\Gamma_0,\widetilde\Gamma_1)^\top= \bigl(H^{1/2}(\Sigma)\times H^{3/2}(\Sigma)\bigr)\times\bigl(H^{3/2}(\Sigma)
\times H^{1/2}(\Sigma)\bigr),$$
which is dense in $(L^2 (\Sigma)\times L^2 (\Sigma))^2$. Moreover, $C_0^\infty(\dR^n\setminus\Sigma)$ is a dense subspace of $L^2 (\R^n)$ which is contained in $\ker\widetilde\Gamma_0\cap\ker\widetilde\Gamma_1$.
Green's identity implies that \eqref{eq:absGreen} holds, and as $H^2(\dR^n)$ is contained in $\ker\widetilde\Gamma_0$
the selfadjoint operator $A_0$ is contained in $(T_{\rm i}\oplus T_{\rm e})\upharpoonright \ker \widetilde\Gamma_0$. Hence the assumptions (i)-(iii) in
Proposition~\ref{prop:ratetheorem} are satisfied and it follows that $\{ L^2 (\Sigma)\times L^2 (\Sigma), \widetilde\Gamma_0, \widetilde\Gamma_1\}$
is a quasi boundary triple for $S_{\rm i}^*\oplus S_{\rm e}^*$ such that $A_0 =  (T_{\rm i}\oplus T_{\rm e})\upharpoonright \ker \widetilde\Gamma_0 $.

Let us verify that the corresponding Weyl function is given by $\widetilde M$ in \eqref{widetildem}. For this let
$\lambda \in\rho(A_0)\cap \rho (A_{\rm D, i}) \cap \rho (A_{\rm N, e})$ and let
$u_\lambda=u_{\lambda, \rm i}\oplus u_{\lambda, \rm e} \in\dom (T_{\rm i}\oplus T_{\rm e})$ be such that
$\cL_j u_{\lambda,j}=\lambda u_{\lambda,j}$, $j = \rm i, e$. Then we have
\begin{equation*}
 \begin{split}
  \begin{pmatrix} \Lambda_{\rm i}(\lambda) & 1 \\ 1 & -\Lambda_{\rm e}(\lambda)^{-1}\end{pmatrix}
  \widetilde\Gamma_1 u_\lambda &=
  \begin{pmatrix} \Lambda_{\rm i}(\lambda) & 1 \\ 1 & -\Lambda_{\rm e}(\lambda)^{-1}\end{pmatrix}
  \begin{pmatrix} u_{\lambda,{\rm i}} |_\Sigma\\ \frac{\partial u_{\lambda,{\rm e}}}{\partial \nu_{\cL_{\rm e}}} \big|_\Sigma\end{pmatrix}\\
  &=\begin{pmatrix} \Lambda_{\rm i}(\lambda) u_{\lambda,{\rm i}} |_\Sigma + \frac{\partial u_{\lambda,{\rm e}}}{\partial \nu_{\cL_{\rm e}}} \big|_\Sigma\\
     u_{\lambda,{\rm i}} |_\Sigma- \Lambda_{\rm e}(\lambda)^{-1}\frac{\partial u_{\lambda,{\rm e}}}{\partial \nu_{\cL_{\rm e}}} \big|_\Sigma
    \end{pmatrix}\\
   &= \begin{pmatrix}
     \frac{\partial u_{\lambda,{\rm i}}}{\partial \nu_{\cL_{\rm i}}} \big|_\Sigma+\frac{\partial u_{\lambda,{\rm e}}}{\partial \nu_{\cL_{\rm e}}}
     \big|_\Sigma \\ u_{\lambda,{\rm i}} |_\Sigma - u_{\lambda,{\rm e}} |_\Sigma
    \end{pmatrix}=\widetilde\Gamma_0 u_\lambda.
 \end{split}
\end{equation*}
By the definition of the Weyl function we obtain that the function $\widetilde M$ in \eqref{widetildem} coincides with the
Weyl function associated to the quasi boundary triple $\{ L^2 (\Sigma)\times L^2 (\Sigma), \widetilde\Gamma_0, \widetilde\Gamma_1\}$
for all
$\lambda \in\rho(A_0)\cap \rho (A_{\rm D, i}) \cap \rho (A_{\rm N, e})$.
\end{proof}

The next lemma is a direct consequence of the fact that the symmetric operators $S_{\rm i}$ and $S_{\rm e}$ are simple;
cf. \cite[Proposition 2.5]{BR12} and \cite[Proposition 2.2]{BR13}.

\begin{lemma}\label{simpleagain}
 The  symmetric operator $S_{\rm i}\oplus S_{\rm e}$ is simple.
\end{lemma}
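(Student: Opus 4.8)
The plan is to use the characterization of simplicity via the defect spaces recalled in~\eqref{eq:simple}: a closed, densely defined symmetric operator $S$ in a Hilbert space $\cH$ is simple if and only if
\[
\cH = \clsp\bigl\{\ker(S^* - \nu) : \nu \in \C\setminus\R\bigr\}.
\]
Since $S_{\rm i}$ and $S_{\rm e}$ were already shown to be simple in the proof of Lemma~\ref{sonnenschein} (see also the references given there), I would apply this criterion to each summand separately and then transfer it to the orthogonal sum.

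First I would observe that $(S_{\rm i}\oplus S_{\rm e})^* = S_{\rm i}^*\oplus S_{\rm e}^*$ acts componentwise on $L^2(\R^n) = L^2(\Omega_{\rm i})\oplus L^2(\Omega_{\rm e})$, so that for each $\nu \in \C\setminus\R$ the defect space decomposes as
\[
\ker\bigl((S_{\rm i}\oplus S_{\rm e})^* - \nu\bigr) = \ker(S_{\rm i}^* - \nu)\oplus\ker(S_{\rm e}^* - \nu);
\]
indeed, an element $u_{\rm i}\oplus u_{\rm e}$ lies in this kernel precisely when $S_{\rm i}^* u_{\rm i} = \nu u_{\rm i}$ and $S_{\rm e}^* u_{\rm e} = \nu u_{\rm e}$.

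The key point is then that $0$ belongs to each of the two kernels on the right-hand side. Consequently, for every $\nu \in \C\setminus\R$ and every $u_{\rm i}\in\ker(S_{\rm i}^* - \nu)$ the element $u_{\rm i}\oplus 0$ lies in the defect space of $S_{\rm i}\oplus S_{\rm e}$ at $\nu$, and likewise $0\oplus u_{\rm e}$ for $u_{\rm e}\in\ker(S_{\rm e}^* - \nu)$. Taking closed linear spans over all $\nu$ and using the simplicity of $S_{\rm i}$ and of $S_{\rm e}$, I obtain that $L^2(\Omega_{\rm i})\oplus\{0\}$ and $\{0\}\oplus L^2(\Omega_{\rm e})$, respectively, are contained in
\[
\clsp\bigl\{\ker\bigl((S_{\rm i}\oplus S_{\rm e})^* - \nu\bigr) : \nu\in\C\setminus\R\bigr\}.
\]
Since the sum of these two subspaces is dense in $L^2(\R^n)$, the span above is all of $L^2(\R^n)$, and the criterion yields that $S_{\rm i}\oplus S_{\rm e}$ is simple. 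I expect no real obstacle here beyond the elementary componentwise decomposition of the defect space; the entire content resides in the already-established simplicity of the two summands, which is exactly why the lemma is stated as a direct consequence of that fact.
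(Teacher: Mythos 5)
Your argument is correct and is precisely the standard fact the paper invokes when it declares the lemma ``a direct consequence'' of the simplicity of $S_{\rm i}$ and $S_{\rm e}$: the defect spaces of the orthogonal sum decompose componentwise, so the closed linear span contains $L^2(\Omega_{\rm i})\oplus\{0\}$ and $\{0\}\oplus L^2(\Omega_{\rm e})$ and hence all of $L^2(\R^n)$. The paper gives no further detail, so your write-up simply supplies the routine verification it leaves to the reader.
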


\begin{proof}[{\bf Proof of Theorem~\ref{thm:eigenSchroedDecoup}}]
Let $\{L^2 (\Sigma)\times L^2(\Sigma), \widetilde\Gamma_0, \widetilde\Gamma_1 \}$ be the quasi boundary triple in Lemma~\ref{prop:qbtSchreod3}. Then 
$(T_{\rm i}\oplus T_{\rm e}) \upharpoonright \ker \widetilde\Gamma_0$ corresponds to the selfadjoint elliptic differential operator $A_0$ in~\eqref{eq:SchroedingerOp} and 
the associated Weyl function coincides with the operator function $\widetilde M$ in~\eqref{widetildem}. Taking Lemma~\ref{simpleagain} into account, 
item (i) follows from Corollary~\ref{thm:eigen} and items (ii)-(iv) are 
consequences of Theorem~\ref{thm:specTotal} and Proposition~\ref{prop:isolatedEV}.
\end{proof}

\end{document}